\documentclass[11pt]{amsart}
%{elsarticle}
 \pdfoutput=1 
 \pagestyle{plain} %---No header in all pages
 
\usepackage{marginnote}
\usepackage{geometry}
%[top=1.5cm, bottom=1.5cm, outer=5cm, inner=2cm, heightrounded, marginparwidth=2.5cm, marginparsep=2cm]
%\geometry{landscape}                % Activate for for rotated page geometry
%\usepackage[parfill]{parskip}    % Activate to begin paragraphs with an empty line rather than an indent
\usepackage{cancel}
\usepackage{amsmath,amsthm, yhmath}
\allowdisplaybreaks[1]
\usepackage{amssymb,mathrsfs}
%\usepackage[utopia]{mathdesign}

%--Typesetting Vectors and Tensors
%\usepackage{vector}
\usepackage{tensor}

\usepackage{slashed}
\usepackage{placeins} %provides the command \FloatBarrier to limit the floating of figures or tables. You could place such a barrier before and after a listing.
%\renewcommand{\baselinestretch}{1.5}

%\usepackage{showlabels}
%\renewcommand{\showlabelfont}{\tiny\slshape\color{green}}
%\usepackage[OMLmathrm,OMLmathbf]{isomath}
% The isomath package makes it possible to typeset upright Greek letters via \mathrm and \mathbf. Is it it possible to force all Greek letters to be typeset upright unless specified otherwise, e.g. to make it so that \(\Phi \Psi\) delivers the same output as \(\mathrm{\Phi \Psi}\)? An ideal solution would be to load isomath with some option and if it optionally could make difference of upper and lower case Greek letters.
%\usepackage{ulsy}
%\usepackage{emp}
\usepackage{wrapfig}
\usepackage[pdftex]{graphicx}
%\DeclareGraphicsRule{*}{mps}{*}{}
\DeclareGraphicsRule{.tif}{png}{.png}{`convert #1 `dirname
  #1`/`basename #1 .tif`.png}

\usepackage{pdfpages}
 %pdfpages is a Tex package that can combine several pdf files using LaTex. Here is the usage:

%----------Font Settings------------

%\usepackage{txfonts}
%\usepackage{kpfonts}
%\usepackage{helvet}

\usepackage{lmodern}
\usepackage[T1]{fontenc}

% \usepackage{mathptmx} % times roman, including math (where possible)
% \usepackage[T1]{fontenc}

% \usepackage{helvet} % helvetica
% \usepackage[T1]{fontenc}

% \usepackage{fourier}
% \usepackage[T1]{fontenc}

% \usepackage[sc]{mathpazo}
% \usepackage[T1]{fontenc}

% \usepackage[bitstream-charter]{mathdesign}
% \usepackage[T1]{fontenc}

%-------------------------------
%\usepackage{comment}
%\excludecomment{oldversion}

\usepackage{esint}
\usepackage{epstopdf}
\usepackage{amscd}
\usepackage{enumerate}
\usepackage{mymathsty}
\usepackage{fancyhdr} 
\usepackage{microtype}
\usepackage{hyperref}
\usepackage[all]{xy}
\xyoption{poly}

% 1-inch margins, from fullpage.sty by H.Partl, Version 2, Dec. 15, 1988.
%-------------------------------------------
% \topmargin 0pt
% \advance \topmargin by -\headheight
% \advance \topmargin 
% by -\headsep
% \textheight 8.9in
% \oddsidemargin 0pt
% \evensidemargin \oddsidemargin
% \marginparwidth 0.5in
% %--------------------------------------------------------------------
 \numberwithin{equation}{section}
%-----------------------------
%---------------------------------Table with fixed Length --------------------
\usepackage{array}
\newcolumntype{L}[1]{>{\raggedright\let\newline\\\arraybackslash\hspace{0pt}}m{#1}}
\newcolumntype{C}[1]{>{\centering\let\newline\\\arraybackslash\hspace{0pt}}m{#1}}
\newcolumntype{R}[1]{>{\raggedleft\let\newline\\\arraybackslash\hspace{0pt}}m{#1}}
%------vertical padding-----------------------

%------------------------------------------------------------------------------
%\pagestyle{fancy} \fancyhead{} \fancyfoot{}\fancyhead[L]{\footnotesize
%} \fancyhead[C]{ }
%\fancyhead[R]{\footnotesize Page\ \thepage}
%\fancypagestyle{plain}{%
%\fancyhead[L]{\footnotesize } \fancyhead[C]{ YANG LIU}\fancyhead[R]{\footnotesize
%Page\ \thepage}}

	% Some Binary Operators:
	% double crossed product \bowtie
	% left action \rhd
	% right action \lhd
	%-----------------------------------------------------------
	%\DeclareMathOperator{\op}{op} % the opposite algebra
	 %
	 % the co-opposite algebra
	
	%---------------------------------------------

	\DeclareMathOperator{\grad}{grad}

	\DeclareMathOperator{\Hom}{Hom}

	%---Index

	\newcommand*{\op}[1]{{\mathrm{#1}}}
	
	%----------------Tensor Product

	%\newcommand{\tensor}{\otimes}
	%---------------------------------
	%\newcommand{\modop}{\blacktriangle}
    \newcommand{\modop}{\slashed \bigtriangleup} %- the modular operator
    \newcommand{\logmodop}{\slashed \bigtriangledown}

	%---------Notions for Lie algebras and Lie groups-------------

	%---------------------------------  Force upright Greek letters 
	\DeclareMathSymbol{\Gamma}{\mathalpha}{operators}{0}
	\DeclareMathSymbol{\Delta}{\mathalpha}{operators}{1}
	\DeclareMathSymbol{\Theta}{\mathalpha}{operators}{2}
	\DeclareMathSymbol{\Lambda}{\mathalpha}{operators}{3}
	\DeclareMathSymbol{\Xi}{\mathalpha}{operators}{4}
	\DeclareMathSymbol{\Pi}{\mathalpha}{operators}{5}
	\DeclareMathSymbol{\Sigma}{\mathalpha}{operators}{6}
	\DeclareMathSymbol{\Upsilon}{\mathalpha}{operators}{7}
	\DeclareMathSymbol{\Phi}{\mathalpha}{operators}{8}
	\DeclareMathSymbol{\Psi}{\mathalpha}{operators}{9}
	\DeclareMathSymbol{\Omega}{\mathalpha}{operators}{10}
	%------------------------
	%new binomial coefficients

	%------------------------
	\renewcommand{\epsilon}{\varepsilon}
	%\renewcommand{\phi}{\varphi}
	
	%-Hyperbolic space
	
	\DeclareMathSymbol{\T}{\mathbin}{AMSb}{"54}

	%\newcommand{\cM}{\mathcal{M}}

	%-----------------------------
	%\newcommand{\defeq}{\mathrel{\mathop:}=}
	%\newcommand{\defeq}{\triangleq}
\usepackage{mathtools}
\newcommand{\defeq}{\vcentcolon=}

	%----------------Bowtie--------

%--------

\newcommand{\symd}{\partial}

%-----------------------Define \dashint---------------
\def\Xint#1{\mathchoice
      {\XXint\displaystyle\textstyle{#1}}%
      {\XXint\textstyle\scriptstyle{#1}}%
      {\XXint\scriptstyle\scriptscriptstyle{#1}}%
      {\XXint\scriptscriptstyle\scriptscriptstyle{#1}}%
      \!\int}
      \def\XXint#1#2#3{{\setbox0=\hbox{$#1{#2#3}{\int}$}
        \vcenter{\hbox{$#2#3$}}\kern-.5\wd0}}
   
   \def\dashint{\Xint-}
	%-----------------------------------------------------------
%------------------------------------------------------------------------------------------------------------------------------------------------------------------------------------------------------------------------------------------------------------------------------------------------------------------------------------------------------------------------------------------------------------------------------------------------------------------------------------------------------------------------------------------------------------------------------------------------------------------------------------------------------------------------------------------------------------------------------------------------------------------------------------------------------------

\begin{document}

 \author{Yang Liu}
%  \address{Department of Mathematics, Ohio State University, 
% Columbus, OH 43201}
\address{Max Planck Institute for Mathematics,
Vivatsgasse 7,
53111, Bonn,
Germany}
\email{liu.858@osu.edu}
%\urladdr{www.math.sc.edu/$\sim$howard} % Delete if not wanted.

\title{Scalar curvature in conformal geometry of Connes-Landi noncommutative manifolds}

\keywords{Connes-Landi deformation, toric noncommutative manifolds, pseudo differential calculus, modular curvature, heat kernel expansion}
\date{\today}
%\thanks{The author is greatly indebted to Henri Moscovici for suggesting this  research project. This paper grew out of numerous conversations with him.}

\begin{abstract}
    
% Motived by the modular geometry on noncommutative two tori developed by Connes-Moscovici in 2014, we would like to implement similar computations to a larger class of noncommutative manifolds: toric noncommutative manifolds (or Connes-Landi deformation, especially when the whole spectral triples are concerned).

We first propose a conformal geometry for Connes-Landi noncommutative manifolds and study the associated scalar curvature. The new scalar curvature  contains its Riemannian counterpart as the commutative limit. Similar to the results on noncommutative two tori, the quantum part of the curvature consists of actions  of the modular derivation through two local curvature functions. Explicit expressions for those functions are obtained for all even dimensions (greater than two). In dimension four, the one variable function shows striking similarity to
 the analytic functions of the characteristic classes appeared in the Atiyah-Singer local index formula, namely, it is roughly a product of the $j$-function (which defines the $\hat A$-class of a manifold) and an exponential function (which defines the Chern character of a bundle). By performing two different computations for the variation of the Einstein-Hilbert action, we obtain a deep internal relations between two local curvature functions. Straightforward verification for those relations gives a strong conceptual confirmation for the whole computational machinery we have developed so far, especially the Mathematica code hidden behind the paper.
 
% From the computational side,  the pseudo differential calculus is upgraded to the vector bundle version. 

% In the previous work, the author constructed a (scalar) pseudo differential calculus on such noncommutative spaces which is suitable for studying the spectral geometry and certain modular
% curvature was computed as an main application. The main goal of this paper is to complete the whole calculation for the scalar curvature for Connes-Landi deformations.  
% 

\end{abstract}

\maketitle

\tableofcontents

% !TEX root =  main.tex

\section{Introduction}

The general question behind this paper is to explore the notion of  intrinsic curvature, which lies in the core of the geometry, in the operator theoretical framework (noncommutative differential geometry). The question remained intangible until the recent development of modular geometry on noncommutative two tori  \cite{MR3194491}, other major references on this subject include  \cite{leschdivideddifference,MR2907006,MR2956317,MR3148618,MR3540454}. The computation has been
extended
to noncommutative four tori smoothly \cite{MR3369894,MR3359018}. The essential computational tool is Connes's pseudo differential calculus to $C^*$-dynamical system, first constructed in \cite{connes1980c} and upgraded in \cite{MR3540454} to Heisenberg modules. Some  different approaches can be found in \cite{2013SIGMA...9..071R,Bhuyain2012}. 
\par

In the modular geometry on noncommutative tori, the Riemannian aspect is somehow hidden in the sense that the original metric is flat.  To obtain a  stronger demonstration that our approach does include Riemannian geometry
as a special case, we would like to test the ideas on a larger class of deformed Riemannian manifolds known as Connes-Landi noncommutative manifolds, first introduced  in \cite{MR1846904}. The computation was initiated in the author's previous work \cite{Liu:2015aa}. The first main result in  \cite{Liu:2015aa} is a pseudo differential calculus which is suitable for studying the spectral geometry. Such a calculus not only records the Riemannian curvature information but also dramatically simplify the computation. By
testing the calculus on the  scalar Laplacian operator $ \Delta_\varphi$ in \cite{MR3194491},  we recovered the local curvature functions. This paper starts with a unfinished problem in \cite{Liu:2015aa}, namely, computing  the full scalar curvature for all even dimensional Connes-Landi noncommutative manifolds. \par

Let us first recall some basic notions in conformal geometry in the operator theoretical setting. The generalization to the noncommutative setting is straightforward, which leads to the conformal geometry of Connes-Landi noncommutative manifolds defined in Table \ref{tab:defn-curvature}.
\par
For a closed Riemannian manifold $(M,g)$, which is also spin with the spinor bundle $\slashed S_g$ and the spinor Dirac operator $\slashed D$, the spin geometry can be recovered by the Dirac model, consists of the spectral data  $(C^\infty(M), L^2(\slashed S_g), \slashed D)$. This motivates the basic paradigm of noncommutative geometry: the notion of spectral triples $(\mathcal A, \mathcal H, D)$, in which $\mathcal A$ is the coordinate algebra and the operator $D$ (playing the role of
the Dirac operator) encodes the metric (cf. \cite{MR1303779,MR1334867}). Thanks to the conformal covariant property of the spinor Dirac operator $\slashed D$, the conformal change of metric $g' = e^{-2h} g$, where $h \in C^\infty(M)$ real-valued, in the spectral setting can be achieved by replacing $\slashed D$ by $D_h =e^{h/2}\slashed D e^{h/2}$. The perturbed Dirac model $(C^\infty(M), L^2(\slashed S_g), D_h)$ via a conformal factor $e^h$ is a natural example of twisted spectral triples of type III in \cite{MR2427588}.
\par

Once the metric is fixed (as $\slashed D$ or $D_h$),  local invariants, such as the scalar curvature function, are encoded in the heat kernel asymptotic: 
\begin{align}
    \Tr(f e^{-t\slashed D^2}) \backsim_{t \searrow 0} \sum_{j=0}^\infty V_j(f,\slashed D^2) t^{(j-m)/2}, \,\, \, \forall f\in C^\infty(M),
    \,\,\, m=\dim M.
    \label{eq:introv2-heatasym}
\end{align}
Each coefficient $V_j(f,\slashed D^2)$ is a spectral  functional (in $f$). The associated local expressions $V_j(x,\slashed D^2) \in \Gamma(\End(\slashed S))$ (also called functional densities) are defined by the property: 
 \begin{align*}
    V_j(f,\slashed D^2) = \varphi_0(f V_j(x,\slashed D^2)) , \,\,\, \forall f\in C^\infty(M),
\end{align*}
here $\varphi_0(\psi) = \int_M \Tr_x(\psi) \mathrm d\op{vol}_g$, $\forall \psi \in \Gamma(\End(\slashed S))$, where $\Tr_x$ is the fiberwise trace. Upto an overall constant $(4\pi)^{-m/2}$, $V_0(x,\slashed D^2) = I$ which is related to the volume form and the next term recovers the scalar curvature function $\mathcal S_{\slashed D}$:
\begin{align}
    V_2(x,\slashed D^2) =- \frac{1}{12} \mathcal S_{\slashed D}. 
    \label{eq:introv2-V2term-local}
\end{align}
See \cite{connes2008noncommutative}, Section 11.1 for a detailed discussion. 
%The notations discussed above have a straightforward generalization to Connes-Landi spectral triples. We summarize the comparison in . 
\begin{table}[!htbp]
\center
\begin{tabular}{|C{5cm}|C{6cm}|}\hline
    Riemannian Geometry & Connes-Landi spectral triples  \\ \hline
 $e^{-2h}$ with $h \in C^\infty(M)$ real-valued  &
   $e^{-2h}$ with $h=h^* \in C^\infty(M_\theta)$ self-adjoint\\ \hline
  Original metric $g$ & Spinor Dirac $\slashed D$ \\ \hline
  $g' = e^{-2h} g$ &  $D_h = e^{h/2}\slashed D e^{h/2}$ \\ \hline
 Scalar curvature for $g'$: $\mathcal{S}_{g'}$ 
 & $R_{D_h} \in C^\infty(M_\theta)$: local expression of $V_2(\cdot, D_h^2)$ defined in \eqref{eq:introv2-V2term-local} \\
 \hline
\end{tabular}
\caption{Conformal change of metric and the associated scalar curvature in the noncommutative setting.}
\label{tab:defn-curvature}
\end{table}
% Eq. \eqref{eq:introv2-V2term-local} leads to the definition  of the scalar curvature in the noncommutative setting. (Definition \ref{defn:scalcur-RDh})
% 
% The major progress of this paper is the computation for the full modular scalar curvature by studying the conformally perturbed spinor Dirac operator $D_h =e^{h/2} \slashed D e^{h/2}$, where $h = h^* \in
% C^\infty(M_\theta)$ is the log-Weyl factor. Similar to the modular curvature on noncommutative two/four tori, 
% the local expression of the curvature is of the form:  
% 

\par

We are ready to state the main progresses made in this paper. Assume that the dimension $m$ of the manifolds is alway even a greater than two, the requirement comes only from Lemma \ref{lem:modcur-put-lamda-to-1}.  Similar to the results on noncommutative two tori, the scalar curvature in the conformal geometry of Connes-Landi noncommutative manifolds is of the form:
\begin{align}
            \label{eq:introv2-RDlogk}
        R_{ D_h} &= \, e^{(-m+2)h} \brac{
            \mathcal K^{(m)}_{ D_h}(\logmodop) (\nabla^2 h) + 
    \mathcal H^{(m)}_{ D_h}(\logmodop_{(1)}, \logmodop_{(2)})(\nabla h \nabla h)
} g^{-1}  \\
&+c_{\Delta} e^{(-m+2)h} \mathcal S_\Delta , \,\,\,m=\dim M. \nonumber
    \end{align}
%     where $m=\dim M \ge 4$ is  even. It is a noncommutative analogue of the formula of the scalar curvature with respect to a conformal change of metric in Riemannian geometry (see \eqref{eq:scalcur-chofscalcur-commutative}). Therefore it contains all the ingredients in the commutative case: the scalar curvature $\mathcal S_\Delta$ for the original metric, the Levi-Civita connection $\nabla$, the metric tensor on the cotangent bundle $g^{-1}$, as a result, the contraction $(\nabla^2 h)g^{-1} = -\Delta h$ recovers the Laplacian of
%     $h$, and $(\nabla h \nabla h)g^{-1}$ is a deformed length for $\nabla h$ (compare with the Dirichlet quadratic form $\sum_{j=1}^2 (\delta_j h)^2 $ in \cite{MR3194491}). \par
     Upto a volume factor $e^{-mh}$, \eqref{eq:introv2-RDlogk} contains its Riemannian version \eqref{eq:scalcur-chofscalcur-commutative} as the commutative limit.
    The noncommutative feature  is reflected by the action of the modular derivation $\logmodop = -2\op{ad}_h$ through two local curvature functions $\mathcal K^{(m)}_{ D_h}(u)$ and $\mathcal H^{(m)}_{ D_h}(u,v)$, which are still begging for more conceptual understanding.
    %Nevertheless, we do have some progress about these two functions in the paper.
    \par 
    The dependence of the local curvature functions $\mathcal K_{ D_h}(u)$ and $\mathcal H_{D_h}(u,v)$ on the dimension $m$ is due to integration over the unit sphere $\mathbb S^{m-1}$ combined with some integration by parts arguments. It turns out that they are all contained in the germs (at $u=0$) of function $F$ and $G$ defined in  \eqref{eq:scalcur-Ktotal} and \eqref{eq:scalcur-Htotal} respectively.  In dimension four, the functions can be derived from
    $F$ and $G$ at $u=0$; in  dimension six, one needs to compute the first jet of $F$ and $G$ at $u=0$;  in dimension eight, one needs the second jet, etc.
    
    \par

For the Gaussian curvature of noncommutative two tori (\cite{MR3194491}, Theorem 4.8), there are two celebrated features for the local curvature functions: 1) The one variable function is the generating function of Bernoulli numbers. 2) There is an  intriguing functional relation  (Eq. 4.38 in \cite{MR3194491}) between them. The two facts both have generalizations in higher dimensions.     

% $\tilde K_0(u)$ and $\tilde H_0(u,v)$
\par    
In dimension four, the one-variable local curvature function equals:
   \begin{align}
       \label{eq:introv2-K}
        \mathcal K_{ D_h}(u) = -\frac12 e^{u/2} \frac{  \sinh \left(u/4\right)}{(u/4)}. 
    \end{align}
One can see the striking similarity to  the analytic functions for the characteristic classes appeared in the Atiyah-Singer local index formula. The $j$-function $\sinh(x/2)/(x/2)$ defines the $\hat A$-class of a manifold while the exponential function gives rise to the Chern character of a vector bundle. It is not a coincident. Indeed, the $j$-function, or $f_1(u) =(e^u-1)/u$, the reciprocal of the generating functions of Bernoulli numbers, they play a crucial role in a noncommutative version of ``chain rule'' in different areas of mathematics. In local index formula, the $\hat{A}$-genus naturally through  the Jacobian of the exponential map of a Lie group $G$: in exponential coordinates $d\exp a = j_{\mathfrak{g}}(h)dh$, where $\mathfrak{g}$ is the Lie algebra, $dh$ is a Haar measure  and
\begin{align*}
j_{\mathfrak{g}} =  \mathrm{det}_{\mathfrak{g}} \brac{ -
f_1(\op{ad}_h )}=\mathrm{det}_{\mathfrak{g}} \brac{ -
\frac{e^{\op{ad}_h}-1}{\op{ad}_h}
  }. 
\end{align*}
See Proposition 5.1 in \cite{berline1992heat}. In our operator theoretical calculus, the $j$-function arises from  differentiating  the conformal factor $e^h$:
\begin{align}
    \nabla e^h  = f_1(\op{ad}_h)(\nabla h) = e^h \frac{e^{\op{ad}_h}-1}{\op{ad}_h}(\nabla h).
    \label{eq:introv2-f1}
\end{align}
 For the other factor $e^{u/2}$ in \eqref{eq:introv2-K}, our understanding is only at the computational level. The factor depends on the dimension $m =\dim M$, it is a sum of exponential  functions in higher dimensions. The dependence on the dimension  reflects the fact that if we rescale the metric by a factor $e^{-2h}$, then the volume is rescaled by  $e^{-mh}$. On the other hand, it also somehow counts how many times do we have to move $e^h$ in front of $\nabla e^h$ during the
 lengthy calculation. It is worth mentioning that in the theory of characteristic classes, those functions are used as  formal power series. It is quite surprising that in our context, they are treated as actual functions. For instance, in \cite{MR3194491}, the positivist of the function $\tilde K_0$ plays a key role.

% came from exchanging the order $e^h$ and $\nabla e^h$ during the lengthy calculation, see Theorem \ref{thm:scalcur-curforDeltak} and Corollary \ref{cor:scalcur-KHDelta_k}. 
\par

With the local expression \eqref{eq:introv2-RDlogk}  in hand, the next thing to study is the Einstein-Hilbert action $F_{\mathrm{EH}}(h) \defeq V_2(1,D_h^2)= \varphi_0(R_{ D_h})$. The related variation problem lead to a functional relation between  $\mathcal K^{(m)}_{ D_h}$ and $\mathcal H^{(m)}_{ D_h}$ in \eqref{eq:introv2-RDlogk}, which is the second main result of this paper. The internal relations gives strong conceptual verification for the whole machinery we have developed to compute the curvature, especially for the \textsf{Mathematica} code which is hidden behind the paper. \par
 
 What is behind the internal relations is the variation equation \eqref{eq:scalcur-varofheatop} which implies that one can fully recover the scalar curvature functional $f \mapsto V_2(f,D_h^2)$, where $f \in C^\infty(M_\theta)$ by the Einstein-Hilbert functional $h \mapsto V_2(1,D_h^2)$. The former is a functional on coordinate functions while the later is a functional on metrics. Take $f =1$ to be the constant function, one recovers the Einstein-Hilbert functional and the functions $\mathcal K^{(m)}_{ D_h}(u)$ and $\mathcal H^{(m)}_{ D_h} (u,v)$ are compressed into a function $T(u)$ defined in Prop. \ref{prop:scalcur-EHlocal}. To unpack the fully scalar curvature functional from the Einstein-Hilbert functional (or from $T(u)$), one has to compute the variation, which consists of some basic operation in operator version of calculus (in the sense of \cite{leschdivideddifference}),  such as integration by part, applying the chain rule, swapping  different types of derivation: $\nabla$ and $\logmodop$. They all reflected in the local curvature functions $L$ to $Q$ defined in \eqref{eq:L-scalcur} to  \eqref{eq:Q-scalcur}. The author believes that  those are the conceptual reasons for the functional relations asked in Connes-Moscovici \cite{MR3194491}. Another remark is that in the case of  dimension two, the variation equation \eqref{eq:scalcur-varofheatop} since 
  the factor $(\dim M -2)$ in the right hand side vanishes. The factor can be normalized by passing from the heat kernel coefficient to the log-determinant  functional, see \cite{MR3194491}.

On noncommutative two tori, such local computation has been pushed to the $V_4$ term for the scalar Laplacian operator in  Connes-Farzard very recent preprint \cite{2016arXiv161109815C}. Calculation on the noncommutative four sphere will be performed in future project. 

\par
%------------------------------------

We conclude the introduction with an outline of the paper. In Section \ref{sec:mfld-notionsforCLdeformation}, we set up the basic notations for our noncommutative manifolds. The main results are recorded in  Section \ref{sec:scalcur-modscalcur} and \ref{sec:scalcur-EHaction}. In section \ref{sec:widomcal}, we briefly review  Widom's pseudo differential calculus, compare with the previous work \cite{Liu:2015aa}, the calculus is extended to vector bundles. All the notations extend naturally to
the noncommutative setting by the deformation machinery discussed in Section \ref{sec:mfld-notionsforCLdeformation}. On the computational side, the whole algorithm has been explained in \cite{Liu:2015aa} in great detail. What is new in Section \ref{sec:computation} is in the computation for the local curvature functions in all even dimensions.

%Therefore in section \ref{sec:computation}, we outline some key steps and focusing on explaining 
% the notations for the complete symbolic results are contained in the figures in Appendix \ref{app:completeb2terrm}.  
% 
% consists of explicit calculation for the local curvature functions introduced in Section \ref{subsec:localcurfun}, especially how to obtain  the explicit expressions in  \eqref{eq:scalcur-KDeltak} and \eqref{eq:scalcur-HDeltak}. 
% 
% At the end of the
% section, we compute the symbol of the squared spinor Dirac operator to demonstrate the
% main difference between the calculus on scalar functions and on vector bundles.  

% Such a calculus is obtained by deforming a intrinsic pseudo differential calculus developed by Widom in \cite{MR560744,MR538027}. In such a calculus, all the ingredients are given in a coordinate-free fashion so that the deformation can be precede. As by products, it comes with two great features: 1) the calculus records the curvature of the underlying manifolds; 2) the tensor      

%\input{introv2}

\section{Notations for Connes-Landi deformations}
\label{sec:mfld-notionsforCLdeformation}

The class of noncommutative manifolds studied in this paper is obtained by deforming Riemannian manifolds along a torus action. The  spectral triple $(C^\infty(M_\theta), L^2(\slashed S), \slashed D)$  was originally proposed in  Connes-Landi \cite{MR1846904} and further investigated in \cite{MR2403605,MR1937657}.  The analytic aspect of the
deformation theory (known as $\theta$-deformation) was developed even earlier by Rieffel \cite{MR1184061}. One of  the reasons the author started to look at those
examples is the gauge theory aspects studied in  \cite{MR3262521}, in which the deformed algebra  $C^\infty(M_\theta)$ is called a toric noncommutative manifold. The author also used the name in the previous work \cite{Liu:2015aa}. 
Other related work on this subject are \cite{MR2230348, 2014arXiv1408.4429C, 2010LMaPh..94..263Y}.  
\par

In this section, we will follow the notations  in \cite{MR3262521} and \cite{MR2349630}. 

% Besides noncommutative tori, the next interesting class of noncommutative manifolds is known as Connes-Landi deformation spectral triples first introduced in Connes-Landi \cite{MR1846904},  then further refined in Connes and Dubois-Violette, \cite{MR1937657} and \cite{MR2403605}. The analytic aspect of the deformation theory (known as $\theta$-deformation) was developed even earlier by Rieffel \cite{MR1184061}. The reason the author started to look at those
% examples is the gauge theory aspects studied in Brain-Landi-van Suijlekom \cite{MR3262521}. Other work related to the analytic aspect of the deformation is  Gayral-Iochum-V\'arilly \cite{MR2230348}, in which the deformation is called isospectral deformation. In this paper and in the author's previous work,  the deformed algebra $C^\infty(M_\theta)$ is called toric noncommutative manifolds
% If we only refer to the deformed algebra $C^\infty(M_\theta)$,  \par
% In \cite{MR3262521}, the deformed algebra $C^\infty(M_\theta)$ is called toric noncommutative manifolds. 

\subsection{Deformation along a $\T^n$-action}
Let $M$ be a closed (compact without boundary) Riemannian manifolds with a torus action as isometries: $\T^n\subset \mathrm{Iso}(M)$. Then $C^\infty(M)$ becomes a smooth representation of $\T^n$ via the pull-back action:
\begin{align}
    U_t(f)(x) = f(t^{-1} \cdot x), \,\,\, x\in M, \,\, t\in \T^n, \,\, f\in C^\infty(M).
    \label{eq:mfld-pullbackoncinfM}
\end{align}
From Fourier theory on torus, $C^\infty(M)$ admits a $\Z^n$-grading, namely any element $f$ has a isotypical decomposition:
\begin{align}
    \label{eq:mfld-isotypicaldecom}
    f = \sum_{r \in \Z^n} f_r, \,\,\, \text{with} \,\, f_r = \int_{\T^n} U_t(f) e^{-2\pi i r.t} dt.
\end{align}
Notice that $f_r$ are eigenvectors for the torus action:  $U_t(f_r) = e^{2\pi i r.t} f_r$, $t\in \T^n$.
The pointwise multiplication in $C^\infty(M)$ can be written as a convolution:
\begin{align}
    \label{eq:mfld-pwprod}
    fg = \sum_{r,l} f_r g_l, \,\,\,\text{with} \,\,f = \sum_r f_r, \,\,g = \sum_l g_r.
\end{align}
To start the deformation, we fix a $n \times n$ skew-symmetric matrix $\theta$, denote by $\chi_\theta(r,l) = \exp(i\pi \abrac{\theta r,l})$ where $r,l\in \Z^n$ a bi-character on $\Z^n$. The deformed multiplication $\times_\theta: C^\infty(M) \times C^\infty(M) \rightarrow C^\infty(M)$ is defined by twisting the convolution in \eqref{eq:mfld-pwprod} above by  $\chi_\theta$:    
\begin{align}
    \label{eq:mfld-deformedprod}
      f \times_\theta g = \sum_{r,l} \chi_\theta(r,l) f_r g_l.
\end{align}
We obtain a  new algebra $C^\infty(M_\theta)\defeq ( C^\infty(M), \times_\theta)$, by keeping the underlying topological vector space and replacing the pointwise multiplication for functions by the deformed one $\times_\theta$. 
\par

To obtain a spectral triple, we require $M$ to be a spin manifold, denote the spinor bundle by $\slashed S$ and the Dirac operator $\slashed D$ acting on the Hilbert space $\mathcal H = L^2(\slashed S)$ of $L^2$-sections of $\slashed S$. The torus action can be lifted to $\slashed S$ upto a double cover: $c:\tilde \T^n\rightarrow \T^n$. The isometric assumption of the action implies that $\mathcal H$ is a unitary representation of $\T^n$.
Functions on $M$ are bounded operators on $\mathcal H$ via left multiplication: $f \mapsto L_f$, $\forall f \in C^\infty(M)$. Observe that 
\begin{align*}
    L_{U_t(f)} = U_{\tilde t} L_f U_{-\tilde t}, \,\,\, \forall t = c(\tilde t) \in \T^n,\,\, \tilde t \in \tilde \T^n.
\end{align*}
We can extend the adjoint on the right hand side to all bounded operators on $\mathcal H$:
\begin{align*}
    T \mapsto U_{\tilde t} T U_{-\tilde t}, \,\,\, T \in B(\mathcal H), \,\, \tilde t \in \tilde \T^n,
\end{align*}
which make $B(\mathcal H)$ into a $\T^n$-module. Same deformation (as in \eqref{eq:mfld-deformedprod}) applies to operators, namely for any $\T^n$-smooth operator $T$ and vector $v \in \mathcal H$, 
\begin{align}
    \label{eq:tormfld-defn-deforops}
    \pi^\theta(T)(v) \defeq \sum_{r,l} \chi_\theta(r,l) T_r(v_l) 
\end{align}
defines a deformed operator $\pi^\theta(T)$. The associativity of the $\times_\theta$-multiplication turns out to be a functorial property for the deformation, whose categorical description can be found in \cite{MR3262521}. As a special case, we see that the whole representation $C^\infty(M) \subset B(\mathcal H)$ is deformed in the following sense. For two operators $T_1$ and $T_2$, let $T_1 \circ_\theta T_2$ be the deformed composition according to \eqref{eq:mfld-deformedprod}, we have
\begin{align*}
    \pi^\theta \brac{ 
    T_1 \circ_\theta T_2
    } = \pi^\theta(T_1) \pi^\theta(T_2).
\end{align*}

%As shown in , the deformation $\pi^\theta(\cdot)$ applys to all pseudo differential operators.  

\par
It was shown in \cite{MR1846904} that the triple $(C^\infty(M_\theta), L^2(\slashed S), \slashed D)$ satisfies all axioms of a noncommutative spin geometry. Motivated by the functorial property, in the previous work \cite{Liu:2015aa}, we extend the deformation map $\pi^\theta$ to all pseudo differential operators and the multiplication $\times_\theta$ to all tensor calculus. The combination of two ingredients under the framework of Widom's work \cite{MR560744,MR538027} gives rise to the pseudo differential
calculus for the noncommutative manifolds.

\par
%\subsection{Notations for later computation}
\subsection{Deformation of Riemannian geometry}
\label{subsec:notationsforlater}
%Thanks for the functoriality of the deformation pointed out in \cite{MR3262521}
The deformation extends to toric equivariant vector bundles (cf. \cite{MR3262521}) over $M$. It simply means that we can deform the $C^\infty(M)$-module structure on the smooth sections to  $C^\infty(M_\theta)$-bimodules using the same formula \eqref{eq:mfld-deformedprod}. For instance, the left action of $C^\infty(M_\theta)$ is defined in \eqref{eq:mfld-deformedprod} by taking $f$ being a function while $g$ being a section.  Moreover, we can deform the whole tensor calculus. Again,
in  \eqref{eq:mfld-deformedprod}, we can set $f$ and $g$ to be two vector fields $X$ and $Y$, then replace  the multiplication by  the fiberwise tensor product $\otimes$, then the right hand side of \eqref{eq:mfld-deformedprod} defines the deformed tensor product $ X \otimes_\theta Y$. For one-form $\omega$,  the pointwise contraction $X \cdot \omega \rightarrow X \cdot_\theta \omega$ is deformed in the same fashion. They share a mixed associativity:
\begin{align*}
    (X \otimes_\theta Y) \cdot_\theta \omega = X \otimes_\theta (Y \cdot_\theta \omega).
\end{align*}
Let $\nabla$ be the Levi-Civita connection, in fact, a toric-equivariant one
will work. What we need is the fact that the connection preserve the isotypic  components \eqref{eq:mfld-isotypicaldecom}, for example, for a smooth function,   $\nabla f_r = (\nabla f)_r$, where $r \in \Z^n$. The upshot is the Leibniz property, for instance, %for the tensor product, we have 
\begin{align*}
   \nabla (X \otimes_\theta Y) =  (\nabla X) \otimes_\theta Y + X \otimes_\theta (\nabla Y).
\end{align*}
For the rest of the paper,  it suffices to just accept the existence of the deformation, the explicit formula \eqref{eq:mfld-deformedprod} no longer matters. We will always use the deformed operations and skip notations like $\times_\theta$, $\pi^\theta(\cdot)$ and $\otimes_\theta$. We conclude this section with a few remarks about the notations in our main result Theorem \ref{thm:scalcur-RD_h}.
\begin{enumerate}
    \item For a function $h \in C^\infty(M_\theta)$, the exponential $k = e^h \in C^\infty(M_\theta)$ is a smooth function on $M$, because $C^\infty(M_\theta)= C^\infty(M)$ as a vector space, which has two interpretations. As an operator, we have $\pi^\theta(k) = \exp(\pi^\theta(h))$. In terms of a function, $k$ is obtained by applying the exponential power series to $h$ with respect to  $\times_\theta$.    
    \item The space of one-forms $\Gamma(T^*M)$ becomes a bimodule over $C^\infty(M_\theta)$ after deformation, the differences can be seen from some elementary examples, for a smooth function $h$
    \begin{align*}
        \nabla (h^2) = h \nabla h + (\nabla h) h \neq 2 h (\nabla h) \,\, \text{or $2(\nabla h) h$}.
    \end{align*}
    What is more interesting is the formula for $\nabla e^h$, see \eqref{eq:introv2-f1}. 
\item Functions or sections which are invariant under the torus become central elements after deformation, examples include  the scalar curvature function $\mathcal S$, metric tensor $g^{-1}$ on the cotangent bundle, etc. Indeed, in \eqref{eq:mfld-deformedprod}, if one of the functions is invariant under the torus action, then the deformed product is reduced to the commutative version.  In particular, in \eqref{eq:sclcur-RDlogk}, we have $(\nabla^2 h)\cdot_\theta g^{-1} = (\nabla^2
    h) g^{-1}= -\Delta h$, for the other term $(\nabla h)(\nabla h)g^{-1}$, it means   
    $$(\nabla h) \otimes_\theta (\nabla h)\cdot_\theta g^{-1} = 
      (\nabla h) \otimes_\theta (\nabla h) \cdot g^{-1},
    $$ 
    which is sort of a deformed inner product $\abrac{\nabla h, \nabla h}_\theta$. On noncommutative two tori, it equals the Dirichlet quadratic form $\sum_{j=1}^1 (\delta_j h)^2$ defined in \cite{MR3194491}. 
\end{enumerate}

\section{Conformal geometry and the scalar curvature }
\label{sec:scalcur-modscalcur}

When proposing conformal geometry in the noncommutative setting, the first criterion for us is the following: the new version should include the Riemannian counterpart as a special case. Thanks to the underlying spin structure, the construction is more straightforward than the one in \cite{MR3194491}, which is a special case explored in \cite{MR2427588}. The basic notions of the conformal geometry of Connes-Landi spectral triples are summarize in Table \ref{tab:defn-curvature} in the introduction.

% In the paradigm of noncommutative geometry,  a  ``space'' is represented by a $C^*$-algebra $A$ playing the role of continuous coordinate functions. A metric structure for such a space is given by a spectral triple $(\mathcal A, \mathcal H, D)$ (cf. \cite{MR1303779,MR1334867}). The algebra $\mathcal A \subset A$ is a dense, holomorphically closed subalgebra with a representation  $\mathcal A \subset B(\mathcal H)$ as bounded operators on the Hilbert space $\mathcal H$. The metric structure is brought in via the unbounded, self-adjoint operator $D$ on $\mathcal H$
% (playing the role of the Dirac operator) with the bounded commutator property, namely $\forall a \in \mathcal A$, the commutator $[D,a] = D a -a D$ are well-defined and bounded. \par
% 
\subsection{Conformal change of metric}
\label{sec:scalcur-con-ch-met} 
Taking advantage of the existence of the Spin structure in our setting, conformal change of metric for Connes-Landi spectral triple is a special case of perturbed spectral triple in \cite{MR2427588}, Section 2.2. Let us quickly review the motivation explained in \cite{MR2427588}. On a Spin manifold $M$, let $\slashed S^g$ be the spinor bundle associated to $g$. For a conformal change of metric $g' = e^{-2h} g$, with $h \in C^\infty(M)$ self-adjoint, there exists a Spin-equivariant gauge transformation $\beta^g_{g'}$ sending $g$-spinorial frames to $g'$-spinorial frames. The gauge transformed Dirac operator $^g\slashed D^{g'} \defeq
\beta_g^{g'} \slashed D^{g'} \beta^g_{g'}$ was computed in \cite{MR1158762}:
\begin{align}
    ^g\slashed D^{g'}  = e^{(m+1)h/2}\slashed D^{g} e^{(1-m)h/2}, \,\,\, \text{where $m=\dim M$.}
    \label{eq:scalcur-gaugedDirac}
\end{align}
In order to get isometries at the level of $L^2$-sections, we have to account for the change of the Riemannian volume form $\op{Vol}_{g'} = e^{-mh} \op{Vol}_{g}$ and  rescale the gauge transformations $\beta^g_{g'}$
\begin{align*}
    \tilde \beta^g_{g'} \defeq e^{mh/2}\circ \beta^g_{g'} = \beta^g_{g'} \circ e^{mh/2} 
\end{align*}
from $L^2(\slashed S^g)$ to $L^2(\slashed S^{g'})$. Finally, the conformal change of metric leads to the following perturbation:
\begin{align*}
    \slashed D \mapsto D_h \defeq e^{h/2}\slashed D e^{h/2}.
\end{align*}

Given a Connes-Landi deformation $(C^\infty(M_\theta), L^2(\slashed S),\slashed D)$, the conformal change of metric is implemented  by a perturbation of the Dirac operator: $\slashed D \mapsto \slashed D_h$, 
where  $D_h \defeq k^{1/2} \slashed D k^{1/2}$, where $k = e^h$ and $h \in C^\infty(M_\theta)$ self-adjoint. Denote the modular operator 
\begin{align*}
    \modop(x) \defeq e^{-2h} x e^{2h}, \,\, \forall x \in C^\infty(M_\theta).
\end{align*}
It was shown in \cite{MR2427588} that the twisted commutator $[D_h,\modop^{-1/2}(x)]$ is bounded for all $x \in C^\infty(M_\theta)$. 
In particular, we have 
\begin{prop}
    The perturbed spectral triple $(C^\infty(M_\theta), L^2(\slashed S), D_h)$ is a  $\modop^{-1/2}$-twisted \textup{(}also called a modular\textup{)} spectral triple in the sense of \emph{Definition 3.1} of  \cite{MR2427588}. 
\end{prop}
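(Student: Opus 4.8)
The plan is to verify the three defining conditions of a $\sigma$-twisted spectral triple from Definition 3.1 of \cite{MR2427588}, with $\sigma = \modop^{-1/2}$, for the triple $(C^\infty(M_\theta), L^2(\slashed S), D_h)$. First I would record that $\modop(x) = e^{-2h} x e^{2h}$ is an (unbounded, in general) automorphism of $C^\infty(M_\theta)$ since $h = h^*$, and that $\modop^{1/2}(x) = e^{-h} x e^h$ makes sense on $C^\infty(M_\theta)$ because $e^{\pm h} \in C^\infty(M_\theta)$ and multiplication is the deformed product; thus $\sigma = \modop^{-1/2}$ and $\sigma^{-1} = \modop^{1/2}$ are algebra automorphisms of $C^\infty(M_\theta)$, and one checks $\sigma(x)^* = \sigma^{-1}(x^*)$ using $h = h^*$. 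This is the "$\sigma$ a regular automorphism" part of the definition.

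The self-adjointness of $D_h = k^{1/2}\slashed D\, k^{1/2}$ with $k = e^h > 0$ follows from $\slashed D = \slashed D^*$ and $k^{1/2} = (k^{1/2})^*$: formally $D_h^* = k^{1/2}\slashed D^* k^{1/2} = D_h$, and since $k^{1/2}$ and $k^{-1/2}$ are bounded with bounded inverse, $D_h$ is essentially self-adjoint on the same core as $\slashed D$ and has compact resolvent because $\slashed D$ does (the resolvent of $D_h$ differs from that of $\slashed D$ by conjugation by a bounded invertible operator, up to lower-order corrections, so compactness is preserved). I would spell this out just enough to be convincing. Next, the requirement that $x(1+D_h^2)^{-1/2}$ be compact for $x \in C^\infty(M_\theta)$: again $D_h^2$ has the same order as $\slashed D^2$ up to bounded conjugation and zeroth-order terms, so $(1+D_h^2)^{-1/2}$ is compact, hence so is its product with the bounded operator $x$.

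The genuinely substantive point—and the one I expect to be the main obstacle—is the boundedness of the twisted commutator
\begin{align*}
[D_h, x]_{\sigma} \defeq D_h\, \sigma(x) - x\, D_h = D_h\, \modop^{-1/2}(x) - x\, D_h
\end{align*}
for all $x \in C^\infty(M_\theta)$. Here I would not redo the computation but instead invoke the statement already quoted from \cite{MR2427588} just above the proposition: it was shown there that $[D_h, \modop^{-1/2}(x)]$ is bounded for all $x$. The algebraic identity making this the right condition is a short manipulation: writing $y = \modop^{-1/2}(x) = k^{-1/2} x k^{1/2}$ (deformed product), one has
\begin{align*}
D_h\, \modop^{-1/2}(y) - y\, D_h = k^{1/2}\slashed D k^{1/2} \cdot k^{-1/2} (k^{-1/2} x k^{1/2}) k^{1/2} - (k^{-1/2}x k^{1/2}) k^{1/2}\slashed D k^{1/2},
\end{align*}
which after collecting the $k^{\pm 1/2}$ factors reduces to $k^{1/2}[\slashed D, x] k^{1/2}$ up to the bounded operators $k^{\pm 1/2}$ and a term involving $[\slashed D, k^{1/2}]$, itself bounded since $k^{1/2} \in C^\infty(M_\theta)$ and $[\slashed D, \cdot]$ maps smooth functions to bounded operators. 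So the twisted-commutator condition for $D_h$ with twist $\modop^{-1/2}$ is equivalent, modulo bounded operators, to the ordinary boundedness of $[\slashed D, x]$ for $x \in C^\infty(M_\theta)$, which holds because $(C^\infty(M_\theta), L^2(\slashed S), \slashed D)$ is already a spectral triple (by \cite{MR1846904}). Assembling these three verifications—$\sigma$ regular, $D_h$ self-adjoint with compact resolvent, twisted commutators bounded—gives exactly Definition 3.1 of \cite{MR2427588} and completes the proof.
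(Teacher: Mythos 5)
Your overall plan (check the conditions of Definition 3.1 of \cite{MR2427588}; the only substantive one is the twisted commutator) is the right one, and indeed it is the route the paper implicitly takes --- the paper offers no proof at all beyond citing \cite{MR2427588}, where the perturbation $D\mapsto e^{h/2}De^{h/2}$ is treated. But the key algebraic identity, which you correctly single out as the crux, is carried out incorrectly. First, your formula for the twist is wrong: from $\modop(x)=e^{-2h}xe^{2h}$ one gets $\modop^{t}(x)=e^{-2th}xe^{2th}$, hence $\modop^{-1/2}(x)=e^{h}xe^{-h}=kxk^{-1}$, not $k^{-1/2}xk^{1/2}$ (the latter is $\modop^{1/4}(x)$). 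Second, with the Connes--Moscovici convention $[D,a]_{\sigma}\defeq Da-\sigma(a)D$ and $\sigma=\modop^{-1/2}$ the commutator telescopes cleanly,
\begin{align*}
D_h x-\modop^{-1/2}(x)D_h
= e^{h/2}\slashed D e^{h/2}x-e^{h}xe^{-h}e^{h/2}\slashed D e^{h/2}
= e^{h/2}\,[\slashed D,\;e^{h/2}xe^{-h/2}]\,e^{h/2},
\end{align*}
which is bounded because $e^{h/2}xe^{-h/2}\in C^\infty(M_\theta)$ and $[\slashed D,\cdot]$ of a smooth element is Clifford multiplication by its gradient. By contrast, the expression you actually manipulate, $D_h\sigma(x)-xD_h$ with your $\sigma$, does \emph{not} telescope: after pulling out $k^{1/2}$ on both sides you are left with $\slashed D b-c\slashed D$ for two \emph{different} elements $b\neq c$ of $C^\infty(M_\theta)$, i.e.\ $[\slashed D,b]+(b-c)\slashed D$, and the second term is a nonzero bounded operator times $\slashed D$, hence unbounded. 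Describing the leftover as ``a term involving $[\slashed D,k^{1/2}]$, itself bounded'' papers over exactly the point at which the argument fails. The fix is simply to use the correct $\modop^{-1/2}$ and the correct sign convention, after which the displayed identity above closes the argument; the remaining verifications (self-adjointness and compact resolvent of $D_h=k^{1/2}\slashed D k^{1/2}$ via boundedness and invertibility of $k^{\pm1/2}$, regularity of the automorphism $\modop^{-1/2}$) are fine as you sketch them.
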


%--------------------------------------------
\subsection{Change of volume form}
For any section $\psi \in \Gamma(\End(\slashed S))$, we denote by $\varphi_0$ the trace functional with respect to the metric $g$
\begin{align}
    \label{eq:scalcur-vol-phi0}
    \varphi_0(\psi) = \int_M \Tr_x (\psi)d\op{vol}_g,
\end{align}
where $\Tr_x$ is the fiberwise trace. After the conformal change of metric defined in the previous section, the new volume form is given by the non-tracial weight:
\begin{align}
    \label{eq:scalcur-nontracial-vol}
    \varphi_h(\psi) \defeq \varphi_0( \psi e^{-mh}).  
\end{align}
Both $\varphi_0$ and $\varphi_h$ have an intrinsic definition via Wodzicki residue. 
\begin{defn}[Wodzicki residue]
    \label{defn:scalcur-wodres}
    Let $P$ be a deformed pseudo differential operator acting on the spinor sections $\Gamma(\slashed S)$, we define
    \begin{align}
        \label{eq:scalcur-wodres}
        \dashint P \defeq \op{Res}_{z=0}\Tr \left( P \slashed D^{-z} \right).  
    \end{align}
\end{defn}
Upto a constant factor, we have for any $\psi \in C^\infty(M_\theta)$:
\begin{align*}
    \varphi_0(\psi) =  \dashint \psi \slashed D^{-m}, \,\,\,\,
    \varphi_h(\psi) = \dashint  \psi D_h^{-m}.
\end{align*}

%-------------------------------------------
\subsection{Local curvature functions}
\label{subsec:localcurfun}

The following family of functions appears in the calculation in section \ref{sec:computation}. In this subsection, we assume that $s,t>0$, $u\leq 0$, $p,q,l \in \Z_{\ge 0}$ and $m = \dim M$ as usual. Denote: 
\begin{align}
    \begin{split}
    \tilde K_{(p,q)}(u,s) &= (1-u)^{-p} (s-u)^{-q}, \\
    \tilde H_{(p,q,l)}(u,s,t) &= (1-u)^{-p} (s-u)^{-q} (st-u)^{-l}.
    \end{split}
    \label{eq:scalcur-Kpq-Hpql}    
\end{align}
 If one substitutes the functions defined above into Figure \ref{fig:onevarfun} in appendix \ref{app:completeb2terrm}, for example:
\begin{align*}
    \mathtt{  Kmod[2,1, s, dimM]} \rightarrow \tilde K_{(2,1)}(u,s),\,\,\,
\mathtt{  H[2,1,1, s, t, dimM]} \rightarrow  \tilde H_{(2,1,1)}(u,s,t),
\end{align*}
and then take the summation, the result is the following function:
\begin{align}
    F(u,s,m) = \frac{m \left(\sqrt{\mathit{s}}+1\right) (u-1)+2 \left(2 \sqrt{\mathit{s}}+u+1\right)}{m (u-1)^3 (u-\mathit{s})}.
    \label{eq:scalcur-Ktotal}
\end{align}
Similarly, to get the two-variable function for the modular scalar curvature, one has to replace the terms in Figure \ref{fig:twovarfunp1} and \ref{fig:twovarfunp2} in  appendix \ref{app:completeb2terrm} by  $\tilde K_{(p,q)}$ and $\tilde H_{(p,q,l)}$ and sum together. We record the result: 
\begin{align}
    \begin{split}
   &\,\, G(u,s,t,m) \\
    =&\,\, \left\{
    \mathit{s}^{3/2} \left(m \left(\left(2 \sqrt{\mathit{s}}+1\right) \sqrt{\mathit{t}}+2\right)-4 \left(\left(2 \sqrt{\mathit{s}}+1\right) \sqrt{\mathit{t}}+1\right)\right)  \right.\\
    &\,\, \left.
     +u^2 \left(2 \left(\mathit{s}+2 \sqrt{\mathit{s}}+3\right) \left(\sqrt{\mathit{s}} \sqrt{\mathit{t}}+1\right)-m \left(\mathit{s}^{3/2} \sqrt{\mathit{t}}-2 \mathit{s} \left(\sqrt{\mathit{t}}-1\right)-2 \sqrt{\mathit{s}}+2\right)\right)
\right. \\
&\,\, \left.
-u\sqrt{\mathit{s}}\left[\mathit{s} \left(6 \sqrt{\mathit{t}}+4\right)+\sqrt{\mathit{s}} \left(10-4 \sqrt{\mathit{t}}\right)-2 \left(\sqrt{\mathit{t}}+1\right)
\right. \right.\\
&\,\, \left. \left.
+m \left(2 \mathit{s}^{3/2} \sqrt{\mathit{t}}+2 \sqrt{\mathit{s}} \left(\sqrt{\mathit{t}}-1\right)+2 \mathit{s}+\sqrt{\mathit{t}}+2\right)\right]
\right. \\
&\,\, \left.
+u^3 \left(m \sqrt{\mathit{s}} \sqrt{\mathit{t}}+2 m+2 \sqrt{\mathit{s}}+2\right)
\right\} \left( m (u-1)^3 (u-\mathit{s})^2 (u-\mathit{s} \mathit{t}) \right)^{-1}.
    \end{split}
       \label{eq:scalcur-Htotal}.
\end{align}

When $m =\dim M \ge 4$ and is even, the local curvature functions in theorem \ref{thm:scalcur-curforDeltak} are determined by the germs of $F$ and $G$ at $u=0$, let $\tilde j_0 = (m-4)/2$:
\begin{align}
    K^{(m)}_{\Delta_k}(s) &= \tilde j_0! \frac{d^{\tilde j_0}}{du^{\tilde j_0}}\Big|_{u=0} F(u,s,m),
    \label{eq:scalcur-KDeltak} \\
 H^{(m)}_{\Delta_k}(s,t) &=
\tilde j_0! \frac{d^{\tilde j_0}}{du^{\tilde j_0}}\Big|_{u=0} G(u,s,t,m).
\label{eq:scalcur-HDeltak}
\end{align}
The appearance of the differential $\frac{d^{\tilde j_0}}{du^{\tilde j_0}}$ is due to Lemma \ref{lem:modcur-put-lamda-to-1}. The proof 
can be found in Section \ref{subsec:comformodcur}. 
\par
As an example, we compute in dimension four, in which $\tilde j_0 =0$: 
\begin{align*}
    K^{(4)}_{\Delta_k}(s) = F(0,s,4) =  -(2s)^{-1},\,\,\,\, 
    H^{(4)}_{\Delta_k}(s) = G(0,s,t,4) = (s^{3/2} t)^{-1}.
\end{align*}
The length of the expressions grows fast when the dimension increases, the explicit expressions in dimension 6 and 8 are listed at the end of section \ref{subsec:comformodcur}.
%-------------------------------------------------
\subsection{The modular scalar curvature }
\label{subsec:scalcur-modscal}
In the rest of the paper, $M$ will be a even dimensional closed spin manifold which admits a Connes-Landi deformation $M_\theta$, where $\theta$ is a skew symmetric matrix. Let $h=h^* \in C^\infty(M_\theta)$ be a log-Weyl factor, then the perturbation $\slashed D \rightarrow D_h = e^{h/2} \slashed D  e^{h/2}$ represents the conformal change of metric $g\rightarrow g' = e^{-2h}g$ in the Riemannian situation. The following heat kernel asymptotic has been established in \cite{Liu:2015aa}: $\forall f \in C^\infty(M_\theta)$,
    \begin{align}
        \label{eq:scalcur-heatasymp}
        \Tr\brac{f e^{-tD_h^2}} \backsim_{t\searrow 0} \sum_{j=0}^\infty V_j(f,D^2_h) t^{(j-m)/2}, \,\,\, m=\dim M,  
    \end{align}
    whose coefficients are functionals on $C^\infty(M_\theta)$.
\begin{defn}
    \label{defn:scalcur-RDh}
     We define the modular scalar curvature $ R_{D_h} \in C^\infty(M_\theta)$ associated to the metric $D_h$ to be the action density of the second coefficient: 
    \begin{align}
        \label{eq:scalcur-defn-RD_h}
        V_2(f,D^2_h) = \varphi_0 \left( f R_{D_h} \right) ,\,\,\, \forall f \in C^\infty(M_\theta),
    \end{align}
    where the $\varphi_0$ is defined in \eqref{eq:scalcur-vol-phi0}.
\end{defn}
\begin{rem}
    \hspace{1cm}
    \begin{enumerate}
        \item To recover the formula for conformal change of metric in Riemannian geometry, one should consider the action density with respect to the new volume functional $\varphi_h$ in \eqref{eq:scalcur-nontracial-vol}. Nevertheless, the difference of the two definitions is only a volume factor $e^{ -mh }$. We will stick with the definition above because we will need the expression of $R_{D_h}$ defined above which contains the volume factor $e^{ -mh }$.
        \item Provided the heat kernel asymptotic  \eqref{eq:scalcur-heatasymp}, one can quickly show that the Wodzicki residue in Definition \ref{defn:scalcur-wodres} can be calculated through the perturbed Dirac $D_h$, namely
            \begin{align}
                \dashint P  = \op{Res}_{z=0}\Tr \left( P D_h^{-z} \right).
                \label{eq:scalcur-wodres-D_h}
            \end{align}
By the basic correspondence between the heat kernel asymptotic and the meromorphic extension of the zeta function (cf. \cite{connes2008noncommutative}, Section 11), we have, up to a constant factor:
\begin{align}
    V_2(f,D^2_h) = \dashint f D_h^{-m+2}, \,\,\, \forall f \in C^\infty(M_\theta), \,\,\, m =\dim M.
    \label{eq:scalcur-compare-V_2-dashint}
\end{align}
 The right hand side is the intrinsic definition for the scalar curvature in  \cite{connes2008noncommutative} Definition 11.1.    
From the point of view, the conformal geometry for the Connes-Landi spectral triple is an example of noncommutative mass scale (cf. \cite{MR2239979}) in the corresponding spectral action. 
 \end{enumerate}
\end{rem}
To simplify the calculation, we start with the perturbed Laplacian  $\Delta_k$ below, which equals $D_k^2$ up to a conjugation, where $k=e^h$ is the Weyl factor. Namely,
\begin{align}
    \label{eq:scacur-defn-Deltak}
    \Delta_k = k^{1/2}  D_k^2 k^{-1/2} = k^2 \slashed D + k[\slashed D, k] \slashed D
    = k^2 \slashed D + k \mathbf c(\nabla k) \slashed D,
\end{align}
where $\mathbf c(\nabla k)$ denotes the deformed Clifford action. We shall see later that the leading symbol of $\Delta_k$ is equal to $k^2 \abs\xi^2 \in C^\infty(T^*M_\theta)$, where $\abs\xi^2$ is the squared length function with respect to the non-deformed Riemannian metric. Therefore we consider the modular operator $\modop$ and its logarithm $\logmodop$ (called modular derivation):
\begin{align}
    \label{eq:scalcur-modop-logmodop}
    \modop(\psi) \defeq e^{-2h} \psi e^{2h},\,\,\, \logmodop(\psi)\defeq -2\op{ad}_h(\psi) = 2(\psi h - h\psi),
\end{align}
here $\psi$ can be any vector on which $C^\infty(M_\theta)$ acts from two sides, such as a tensor field  or a spinor section over $M_\theta$.  \par
 
Let $m=\dim M$ even,  denote the volume of the unit sphere by $$\mathrm{Vol}(\mathbb S^{m-1}) = (2 \pi^{m/2})/\Gamma(m/2),$$ we need the following constant in the theorems in this section: 
\begin{align}
    \frac{1}{(2\pi)^m}\cdot \frac12 \cdot \mathrm{Vol}(\mathbb S^{m-1}) = \frac{1}{(4\pi)^{m/2}} \frac{1}{\Gamma(m/2)}. 
    \label{eq:scalcur-overallcont}
\end{align}
    
Before stating the main result, we recall the classical result for the scalar curvature $\mathcal S$ with respect to a conformal change of metric $g' =e^{-2h} g $, $m=\dim M$ (cf. \cite{MR867684}, Theorem 1.159):
\begin{align}
    \label{eq:scalcur-chofscalcur-commutative}
   \mathcal S_{g'} =   e^{2h} \brac{
             2(m-1) \Delta h - (m-2)(m-1)\abrac{d h ,d h}_{g^{-1}} +\mathcal S_{g}}, 
\end{align}
where $g^{-1}$ is the metric tensor on the cotangent bundle, $\Delta h$ is the Laplacian of $h$ and $\abrac{d h ,d h}_{g^{-1}} = \abs{df}^2$ is the squared length of the one-form $dh$ with respect to the starting metric $g$.

\begin{thm}
    \label{thm:scalcur-curforDeltak}
    Let $M_\theta$ be a toric noncommutative manifold \textup (or a Connes-Landi deformation\textup ) with $m = \dim M$ even. For $k =e^h$, let $\Delta_k$ be the perturbed Laplacian defined in \eqref{eq:scacur-defn-Deltak} and $R_{\Delta_k}(k)$ be the action density function  for  $V_2(\cdot,\Delta_k)$ in \eqref{eq:scalcur-heatasymp}  with respect to $\varphi_0$ in \eqref{eq:scalcur-vol-phi0}, that is 
    \begin{align*}
       V_2(f ,\Delta_k) = \varphi_0(fR_{\Delta_k}(k)), \,\,\, \forall f\in C^\infty(M_\theta).
    \end{align*}
    Up to the overall constant  given in \eqref{eq:scalcur-overallcont}, we have:
\begin{align}
\label{eq:scalcur-R-Deltak}
R_{\Delta_k}(k) &= k^{-m+1} K^{(m)}_{\Delta_k}(\modop)(\nabla^2 k) \cdot g^{-1}
    + k^{-m} H^{(m)}_{\Delta_k}(\modop_{(1)},\modop_{(2)}) \brac{
    \nabla k\nabla k
}\cdot g^{-1} \\
&\,\,\, \,\, + c^{(m)}_{\Delta} k^{-m+2} \mathcal S_\Delta. \nonumber
\end{align}
The local curvature functions $K^{(m)}_{\Delta_k}$ and $H^{(m)}_{\Delta_k}$ are given in equation \eqref{eq:scalcur-KDeltak} and  \eqref{eq:scalcur-HDeltak} respectively. 
%. the previous section \ref{subsec:localcurfun}, see 
\end{thm}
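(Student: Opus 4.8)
The plan is to compute the functional $f\mapsto V_2(f,\Delta_k)$ directly from the symbol calculus of the resolvent of $\Delta_k$, in the deformed Widom calculus reviewed in Section~\ref{sec:widomcal}; this runs parallel to \cite{MR3194491} and is the program set up in \cite{Liu:2015aa}, whose execution in all even dimensions is the content of Section~\ref{sec:computation}. By the heat asymptotic \eqref{eq:scalcur-heatasymp}, equivalently by the Wodzicki residue identity \eqref{eq:scalcur-compare-V_2-dashint}, the coefficient $V_2(f,\Delta_k)$ is the action density of the integral over the cosphere bundle, combined with a contour integral in $\lambda$, of the term $b_2(x,\xi,\lambda)$ in the symbol expansion of a parametrix for $(\Delta_k-\lambda)^{-1}$. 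One starts from $\sigma(\Delta_k)=p_2+p_1+p_0$ with leading symbol $p_2=k^2|\xi|^2$, where $|\xi|^2$ is the central squared-length function of the undeformed metric, and with $p_1,p_0$ read off from \eqref{eq:scacur-defn-Deltak} and the Levi-Civita connection, and generates $b=b_0+b_1+b_2+\cdots$ recursively from $b_0=(p_2-\lambda)^{-1}$ by the usual formula, the essential point being that the noncommutativity of $k$ with $\nabla k$ and $\nabla^2 k$ must be carried along at every stage.

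The second step is the noncommutative rearrangement. Since $b_0=(k^2|\xi|^2-\lambda)^{-1}$ does not commute with the conformal factors in $p_1$ and $p_0$, moving every $k$ to one side replaces them by conjugations $x\mapsto k^{-1}xk$, that is, inserts the modular operator $\modop$, and rescales the surviving resolvent denominators; the elementary blocks produced are exactly the functions $\tilde K_{(p,q)}(u,s)$ and $\tilde H_{(p,q,l)}(u,s,t)$ of \eqref{eq:scalcur-Kpq-Hpql}, in which $u$ carries the resolvent parameter and $s,t$ are the eigenvalue-type parameters of the modular operator frozen in during the rearrangement. Only finitely many indices $(p,q)$ and $(p,q,l)$ occur, because the recursion stops at $b_2$. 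Applying the rearrangement lemma in the sense of \cite{leschdivideddifference} turns the resulting operator-valued $\xi$- and $\lambda$-integrals into functional-calculus expressions; since $b_2$ is a local invariant of weight two, the outcome must be a sum of a term in $\nabla^2 k$, a term in $\nabla k\,\nabla k$, and a curvature term, namely $K^{(m)}_{\Delta_k}(\modop)(\nabla^2 k)\cdot g^{-1}$, $H^{(m)}_{\Delta_k}(\modop_{(1)},\modop_{(2)})(\nabla k\,\nabla k)\cdot g^{-1}$, and $c^{(m)}_\Delta k^{-m+2}\mathcal S_\Delta$ coming from the part of $b_2$ in which every $k$ is already central. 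The overall powers $k^{-m+1}$ and $k^{-m}$ are then forced by homogeneity: the conformal rescaling $g\mapsto e^{-2h}g$ rescales $\varphi_0$ by $e^{-mh}$, and $\nabla^2 k$, $\nabla k\,\nabla k$ absorb the remaining weight.

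The third step evaluates the scalar integrals attached to each block. The integral over $\mathbb S^{m-1}$, after the standard integration-by-parts identities that reduce even-degree monomials in $\xi$ to symmetrized products of the cometric $g^{-1}$, turns each block into a rational function of $s,t,u$, with the sphere volume extracted into the overall constant \eqref{eq:scalcur-overallcont}; the remaining contour integral in $\lambda$ is carried out by Lemma~\ref{lem:modcur-put-lamda-to-1}, which identifies it with $\tilde j_0!\,\frac{d^{\tilde j_0}}{du^{\tilde j_0}}\big|_{u=0}$, where $\tilde j_0=(m-4)/2$, applied to that rational function --- this is precisely where $m$ even and $m\ge 4$ enters. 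Summing the contributions of all blocks produces the closed forms $F(u,s,m)$ of \eqref{eq:scalcur-Ktotal} and $G(u,s,t,m)$ of \eqref{eq:scalcur-Htotal}, hence $K^{(m)}_{\Delta_k}$ and $H^{(m)}_{\Delta_k}$ through \eqref{eq:scalcur-KDeltak} and \eqref{eq:scalcur-HDeltak}; the constant $c^{(m)}_\Delta$ and the agreement of the central part with the classical formula \eqref{eq:scalcur-chofscalcur-commutative} drop out of the same computation in the commutative limit $\modop=\mathrm{id}$, i.e.\ $s=t=1$.

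The main obstacle is the second step: there is no conceptual shortcut past the bookkeeping. The number of monomials in $b_2$, each with its own noncommutative ordering of the conformal factors and its own string of derivatives landing on $\modop$, is large, and bringing every one of them into the canonical form built from $\tilde K_{(p,q)}$ and $\tilde H_{(p,q,l)}$ --- tracking which copy of $k$ each modular operator acts on, and discharging the sphere integration-by-parts identities consistently --- is the delicate part, and is the reason the computation in Section~\ref{sec:computation} is implemented in \textsf{Mathematica}. The independent rederivation via the Einstein-Hilbert variation in Section~\ref{sec:scalcur-EHaction} then serves as the a posteriori confirmation that this bookkeeping has been done correctly.
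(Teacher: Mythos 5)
Your proposal follows essentially the same route as the paper: the resolvent parametrix $b_0+b_1+b_2$ for $\Delta_k-\lambda$ in the deformed Widom calculus, integration over the cosphere bundle, reduction of the contour integral via Lemma~\ref{lem:modcur-put-lamda-to-1}, and the rearrangement lemma to package each monomial of $b_2$ into the blocks $\tilde K_{(p,q)}$ and $\tilde H_{(p,q,l)}$, with the final summation delegated to machine computation exactly as in Section~\ref{sec:computation}. The only slight compression is that you fold the lemma (which evaluates the $\lambda$-contour as $\tfrac{d^{j_0}}{d\lambda^{j_0}}\big|_{\lambda=-1}$ with $j_0=(m-2)/2$) together with the subsequent substitution $u=1/x$ that converts it into $\tfrac{d^{\tilde j_0}}{du^{\tilde j_0}}\big|_{u=0}$ with $\tilde j_0=(m-4)/2$, but this is the same computation the paper performs in Section~\ref{subsec:comformodcur}.
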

The formula \eqref{eq:scalcur-R-Deltak} contains all the necessary ingredients in the Riemannian version: $\mathcal S_\Delta$ is the scalar curvature function, $g^{-1}$ stands for the dual  metric tensor on $T^*M$ and $\nabla$ is the Levi-Civita connection. The constant $c^{(m)}_{\Delta}$ is defined in \eqref{eq:modcur-constcm}. Internal operations, like tensor product or contractions, between functions and tensor fields  have been discussed at the end of   Section \ref{subsec:notationsforlater}.  

% Notations in \eqref{eq:scalcur-R-Deltak} need to be explained carefully. First, $\mathcal S_\Delta$ is the scalar curvature function for the original metric $g$ and $g^{-1}$ stands for the dual  metric tensor on $T^*M$. Since $\mathcal S_\Delta$ is invariant under the deformation, the constant $c_{\Delta}$ stays the same as in the commutative case. This fact is checked at the end of section \ref{subsec:comformodcur}. 

%\footnote{Since $g^{-1}$ is $\T^n$-invariant, the deformed contraction is reduced to the commutative version.}
% The deformed contractions recover the classical ingredients shown in \eqref{eq:scalcur-chofscalcur-commutative}, namely the Laplacian of the conformal factor and the length of $dk$:  $(\nabla^2 k) g^{-1} = -\Delta k$ and $\brac{
%     \nabla k\nabla k
% } g^{-1} = \abs{\nabla k}_{\theta}^2$, the notations means the deformed length for the one-form $\nabla k$. When the underlying manifold is the noncommutative two torus, this notion agrees with the Dirichlet quadratic form: $\sum_{j=1}^2 \delta_j^2(k)$ in \cite{MR3194491}.
% 
% One should think is as a replacement of the Dirichlet quadratic form: $\sum_{j=1}^2 \delta_j^2(k)$ defined in the introduction of \cite{MR3194491}. 
\par
% \footnote{Even though $g^{-1}$ is $\T^n$-invariant, $\nabla k$ is not, as a result, $(\nabla k\nabla k) g^{-1}$ is different from its commutative counterpart.}

The quantum part of the curvature is given by the action of the modular operator $\modop$ (in \eqref{eq:scalcur-modop-logmodop}) through two local curvature functions $K^{(m)}_{\Delta_k}$ and $H^{(m)}_{\Delta_k}$ discussed in the previous section \ref{subsec:localcurfun}, see equation \eqref{eq:scalcur-KDeltak} and  \eqref{eq:scalcur-HDeltak}. \par

Now we are ready to compute the expression of $R_{D_h}$ in Definition \eqref{defn:scalcur-RDh}.
Notice that for any $f \in C^\infty(M_\theta)$,
\begin{align*}
    \Tr \brac{f e^{-t  D^2_h} } = \Tr \brac{k^{1/2} f e^{-t  D^2_h} k^{-1/2}}
    = \Tr \brac{k^{1/2} f k^{-1/2} e^{-t \Delta_h}},
\end{align*}
hence  
\begin{align*}
    V_2(f,  D_k^2) = V_2(k^{1/2} f k^{-1/2}, \Delta_h) = \int_M k^{1/2} f k^{-1/2} R_{\Delta_h}
    = \int_M  f k^{-1/2} R_{\Delta_h} k^{1/2}.
\end{align*}

Therefore the scalar curvature densities for $\slashed D^2_k$ and $\Delta_k$ are related via
\begin{align}
    \label{eq:scalcur-RDvsRDelta}
    R_{D_h} = k^{-1/2} R_{\Delta_k} k^{1/2}.
\end{align}

\begin{cor}
    \label{cor:scalcur-KHDelta_k}
    Keep the notations. The modular scalar curvature $R_{D_h}$ for the metric $D_h = e^{h/2} \slashed D e^{h/2}$ shares the same form as $R_{\Delta_k}$ defined in \eqref{eq:scalcur-R-Deltak} with the new local curvature functions given by
\begin{align*}
    K^{(m)}_{ D^2_k}(s) =s^{1/4} K^{(m)}_{\Delta_k}(s), \,\,\,
H^{(m)}_{ D^2_k}(s,t)= s^{1/4}t^{1/4} H^{(m)}_{ \Delta_k}(s,t)
\end{align*}
with $s,t>0$.
\end{cor}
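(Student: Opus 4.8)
The statement follows directly from the relation \eqref{eq:scalcur-RDvsRDelta}, $R_{D_h}=k^{-1/2}R_{\Delta_k}k^{1/2}$, once one conjugates the explicit formula \eqref{eq:scalcur-R-Deltak} for $R_{\Delta_k}$ term by term and reorganizes. The single observation that drives everything is that conjugation by $k^{1/2}=e^{h/2}$ is a \emph{fractional power of the modular operator}: writing $\modop$ as the product of the commuting left and right multiplications by $e^{-2h}$ and $e^{2h}$, one gets $\modop^{t}(\psi)=e^{-2th}\,\psi\,e^{2th}$ for every $\psi$ on which $C^\infty(M_\theta)$ acts on both sides, and in particular
\[
    k^{-1/2}\,\psi\,k^{1/2}=e^{-h/2}\,\psi\,e^{h/2}=\modop^{1/4}(\psi).
\]

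First I would treat the leading term $k^{-m+1}K^{(m)}_{\Delta_k}(\modop)(\nabla^2 k)\cdot g^{-1}$. Since $g^{-1}$ is central after deformation (it is torus invariant; see the remarks at the end of Section \ref{subsec:notationsforlater}), the contraction against $g^{-1}$ is a bimodule map and commutes with the conjugation, and since all powers of $k$ commute the factor $k^{-m+1}$ passes to the left of $k^{-1/2}$. What is left is $k^{-m+1}\modop^{1/4}\bigl(K^{(m)}_{\Delta_k}(\modop)(\nabla^2 k)\bigr)\cdot g^{-1}$; as $\modop^{1/4}$ and $K^{(m)}_{\Delta_k}(\modop)$ are both functions of the single operator $\modop$ they commute, and their composition is the functional calculus of $s\mapsto s^{1/4}K^{(m)}_{\Delta_k}(s)$. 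This identifies the new one-variable function $K^{(m)}_{D^2_k}$.

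For the quadratic term $k^{-m}H^{(m)}_{\Delta_k}(\modop_{(1)},\modop_{(2)})(\nabla k\nabla k)\cdot g^{-1}$ the same manipulation applies, the only new ingredient being how $\modop^{1/4}$ distributes over the two tensor slots. Using the balancing of the tensor product over $C^\infty(M_\theta)$ one checks that $\modop^{1/4}(\nabla k\otimes\nabla k)=\modop^{1/4}(\nabla k)\otimes\modop^{1/4}(\nabla k)$, i.e. $\modop^{1/4}=\modop_{(1)}^{1/4}\modop_{(2)}^{1/4}$ as operators on $2$-tensors; since $\modop_{(1)}$ and $\modop_{(2)}$ commute with $H^{(m)}_{\Delta_k}(\modop_{(1)},\modop_{(2)})$, the conjugated term becomes the functional calculus of $(s,t)\mapsto s^{1/4}t^{1/4}H^{(m)}_{\Delta_k}(s,t)$. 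Finally the curvature term $c^{(m)}_{\Delta}k^{-m+2}\mathcal S_\Delta$ is untouched, because $\mathcal S_\Delta$ is central and powers of $k$ commute. Collecting the three pieces exhibits $R_{D_h}$ in the form \eqref{eq:scalcur-R-Deltak} with the stated functions, which is Corollary \ref{cor:scalcur-KHDelta_k}.

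I do not expect a genuine obstacle here; the one place that needs care rather than ingenuity is the two-variable functional calculus — making precise that conjugation by $k^{1/2}$ acts on $\nabla k\otimes\nabla k$ as $\modop_{(1)}^{1/4}\modop_{(2)}^{1/4}$ and that this passes through $H^{(m)}_{\Delta_k}(\modop_{(1)},\modop_{(2)})$. Everything else is an immediate consequence of \eqref{eq:scalcur-RDvsRDelta}, the spectral calculus of $\modop$, and the centrality of $g^{-1}$ and $\mathcal S_\Delta$ recorded in Section \ref{subsec:notationsforlater}.
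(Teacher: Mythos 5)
Your argument is correct and is exactly the route the paper intends: the corollary is stated as an immediate consequence of the conjugation relation \eqref{eq:scalcur-RDvsRDelta}, and your identification of $k^{-1/2}(\cdot)k^{1/2}$ with $\modop^{1/4}$, together with $\modop^{1/4}=\modop_{(1)}^{1/4}\modop_{(2)}^{1/4}$ on two-tensors and the centrality of $g^{-1}$ and $\mathcal S_\Delta$, supplies precisely the details the paper leaves implicit. No gaps.
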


\subsection{Modular scalar curvature in terms of $h = \log k$}
\label{subsec:scalcur-h-modscal}
For $k = e^h$, we have, similar to \cite{MR3194491} Sec. 6.1 and \cite{leschdivideddifference} Example 3.13, 
\begin{align}
    \label{eq:scalcur-nablak}
    \nabla k &= k  f_1(\logmodop/2)(\nabla h), \\ 
 \nabla^2 k &= k \brac{ f_{1} (\logmodop/2)(\nabla^2 h) + 2g_{2}(\logmodop_{(1)}/2, \logmodop_{(2)}/2)(\nabla h \nabla h)}, 
\label{eq:scalcur-nabla^2k}
\end{align}
where  $\logmodop = -2\op{ad}_h$ is the modular derivation and the functions $f_1$  and $g_2$ are given by
\begin{align}
    \label{eq:scalcur-f1g2}
    f_1(s) &= \frac{e^s-1}{s}, \\
    g_2(s,t) &=\frac{f_1(s+t) - f_1(s)}{t}=
    \frac{s e^{s+t}-e^s (s+t)+t}{s t (s+t)}.
\label{eq:scalcur-g2}
\end{align}
 We remark that $1/f_1(s) = s/(e^s -1)$ is the  generating function of Bernoulli  numbers. Equation \eqref{eq:scalcur-nablak} can be viewed and a generalization of the chain rule in calculus. One can compare it with the differential of the exponential map of Lie groups whose Jacobian involves the same function, for instance, Prop. 5.1 in \cite{berline1992heat}.  

After the substitution, $R_{\Delta_k}$ in \eqref{eq:scalcur-R-Deltak} becomes
\begin{align}
    \label{eq:scacur-RDeltalogk}
    R_{\Delta_k}(h) &= \,k^{-m+2} \brac{
        \mathcal K^{(m)}_{\Delta_k}(\logmodop) (\nabla^2 h) + 
    \mathcal H^{(m)}_{\Delta_k}(\logmodop_{(1)}, \logmodop_{(2)})(\nabla h \nabla h)
} g^{-1}  \\
&+c_{\Delta} k^{-m+2} \mathcal S_\Delta \nonumber.
\end{align}
\begin{prop}
    The modular curvature functions appeared in \eqref{eq:scalcur-R-Deltak} and \eqref{eq:scacur-RDeltalogk} are related by
\begin{align}
 \label{eq:scacur-relation-RDeltak-RDeltalogk}
 \begin{split}
   \mathcal K^{(m)}_{\Delta_k}(s) &=  K^{(m)}_{\Delta_k}(e^s) f_1(s/2), \\
    \mathcal H^{(m)}_{\Delta_k}(s,t) &= e^{s/2}  H^{(m)}_{\Delta_k}(e^s,e^t) f_1(s/2)f_1(t/2)
    +2 K_{\Delta_k}(e^{s+t}) g_2(s/2,t/2)  
 \end{split}
\end{align}
where $s,t \in \R$.
\end{prop}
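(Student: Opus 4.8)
The plan is to substitute the chain-rule identities \eqref{eq:scalcur-nablak} and \eqref{eq:scalcur-nabla^2k} into the expression \eqref{eq:scalcur-R-Deltak} for $R_{\Delta_k}(k)$ and then match the result against the target form \eqref{eq:scacur-RDeltalogk}, reading off the new curvature functions term by term. The only subtlety is that the operators $\nabla k$, $\nabla^2 k$ carry a left factor of $k = e^h$, which must be commuted past the curvature-function operators before the powers of $k$ can be collected; this is exactly where the modular operator $\modop$ gets converted into $\logmodop$ and where the extra exponential factors appear.

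First I would treat the $\nabla^2 k$ contribution. Plugging \eqref{eq:scalcur-nabla^2k} into the first summand of \eqref{eq:scalcur-R-Deltak} gives
\begin{align*}
k^{-m+1} K^{(m)}_{\Delta_k}(\modop)\Bigl( k\bigl(f_1(\logmodop/2)(\nabla^2 h) + 2 g_2(\logmodop_{(1)}/2,\logmodop_{(2)}/2)(\nabla h\nabla h)\bigr)\Bigr)\cdot g^{-1}.
\end{align*}
Here I would use the basic bookkeeping fact that $\modop = e^{\logmodop}$ on any bimodule element, and that moving a factor $k=e^h$ from the right of a monomial to the left of the whole expression replaces $\modop$ by $e^{\logmodop}$ acting on the shorter monomial, picking up a scalar factor that accounts for the one extra slot; concretely $K^{(m)}_{\Delta_k}(\modop)(k\,\omega) = k\, K^{(m)}_{\Delta_k}(e^{\logmodop})(\omega)$ after the commutation, absorbing one power of $k$ and turning $k^{-m+1}$ into $k^{-m+2}$. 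This yields the $f_1(s/2)$ in the first line of \eqref{eq:scacur-relation-RDeltak-RDeltalogk} and the first piece, $e^{s/2} H^{(m)}_{\Delta_k}(e^s,e^t) f_1(s/2) f_1(t/2)$, of the second line — the $e^{s/2}$ being the extra exponential produced because $f_1(\logmodop/2)$ sits to the left of the second factor in the $g_2$-term and must be commuted through it. Next I would handle the $\nabla k\nabla k$ summand of \eqref{eq:scalcur-R-Deltak}: substituting \eqref{eq:scalcur-nablak} into $H^{(m)}_{\Delta_k}(\modop_{(1)},\modop_{(2)})(\nabla k\nabla k)$ produces $k^2$ worth of Weyl factors and two copies of $f_1$, one in each tensor slot, and after the same commutation argument one obtains the $2 K_{\Delta_k}(e^{s+t}) g_2(s/2,t/2)$ correction term — the argument $e^{s+t}$ arising because both factors of $k$ have been pushed to the left past a single $K$-operator, so the modular variables add. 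Finally the scalar-curvature term $c_\Delta k^{-m+2}\mathcal S_\Delta$ is untouched since $\mathcal S_\Delta$ is central.

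The main obstacle is purely organizational: keeping scrupulous track of which tensor slot each $f_1$ or $g_2$ operator acts on (the subscripts $(1)$, $(2)$ in $\logmodop_{(1)},\logmodop_{(2)}$), and correctly recording the scalar exponential picked up every time a Weyl factor $e^h$ is transported across an operator of the form $\phi(\logmodop)$. Once a clean commutation lemma of the shape ``$\phi(\logmodop)(e^h\,\omega) = e^h\,\phi(\logmodop)(\omega)$ with the slot count adjusted'' is stated, the identification of the two lines of \eqref{eq:scacur-relation-RDeltak-RDeltalogk} is a direct comparison of coefficients between the substituted \eqref{eq:scalcur-R-Deltak} and \eqref{eq:scacur-RDeltalogk}; I would present it as such rather than expanding $f_1$ and $g_2$ into their closed forms, since no cancellation is needed beyond the definitions \eqref{eq:scalcur-f1g2}–\eqref{eq:scalcur-g2}.
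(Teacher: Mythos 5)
Your overall strategy is exactly the paper's (the paper offers no more than ``after the substitution, \eqref{eq:scalcur-R-Deltak} becomes \eqref{eq:scacur-RDeltalogk}''): plug \eqref{eq:scalcur-nablak} and \eqref{eq:scalcur-nabla^2k} into \eqref{eq:scalcur-R-Deltak} and read off the coefficients. The problem is that your term-by-term bookkeeping is wrong, and in a way that is not merely cosmetic. You attribute the piece $e^{s/2}H^{(m)}_{\Delta_k}(e^s,e^t)f_1(s/2)f_1(t/2)$ to the $\nabla^2k$ substitution. That is structurally impossible: the first summand of \eqref{eq:scalcur-R-Deltak} contains only the one-variable operator $K^{(m)}_{\Delta_k}(\modop)$, so nothing coming out of it can involve $H^{(m)}_{\Delta_k}$. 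What the $g_2$-half of $\nabla^2 k$ actually contributes is the \emph{other} piece, $2K^{(m)}_{\Delta_k}(e^{s+t})g_2(s/2,t/2)$; the argument $e^{s+t}$ appears because $\logmodop$ is a derivation, so on a two-slot product $\logmodop=\logmodop_{(1)}+\logmodop_{(2)}$ and $K(e^{\logmodop})$ becomes $K(e^{s+t})$ --- not because ``both factors of $k$ have been pushed past a $K$-operator.'' Symmetrically, you attribute $2K_{\Delta_k}(e^{s+t})g_2(s/2,t/2)$ to the $\nabla k\,\nabla k$ summand, which involves only $H^{(m)}_{\Delta_k}(\modop_{(1)},\modop_{(2)})$ and therefore can only produce (scalar prefactors)$\times H^{(m)}_{\Delta_k}(e^s,e^t)$, never a $K$-term. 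The two contributions to $\mathcal H^{(m)}_{\Delta_k}$ are swapped in your account.

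The root of the confusion is your commutation lemma. The identity $K^{(m)}_{\Delta_k}(\modop)(k\omega)=k\,K^{(m)}_{\Delta_k}(e^{\logmodop})(\omega)$ is correct but picks up \emph{no} scalar factor whatsoever: $k=e^h$ commutes with conjugation by $e^{2h}$, and $\modop=e^{\logmodop}$ is an identity, not a commutation cost. The only place a nontrivial exponential is generated is when a Weyl factor sits in the \emph{middle} of a product and must be transported left past a noncommuting element, via $\omega k = k\,\modop^{1/2}(\omega)=k\,e^{\logmodop/2}(\omega)$. This is exactly what happens in $\nabla k\,\nabla k = k\,f_1(\logmodop/2)(\nabla h)\,k\,f_1(\logmodop/2)(\nabla h)=k^2\,e^{\logmodop_{(1)}/2}f_1(\logmodop_{(1)}/2)f_1(\logmodop_{(2)}/2)(\nabla h\,\nabla h)$, and it is the sole source of the factor $e^{s/2}$ in the second line of \eqref{eq:scacur-relation-RDeltak-RDeltalogk}. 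Since the whole content of this proposition is precisely this coefficient matching, the derivation as written does not establish it, even though the target formula (and your general plan) is correct; redo the matching with the two sources correctly assigned and the argument goes through.
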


Recall \eqref{eq:scalcur-RDvsRDelta} which states that $R_{D_h}$ and $R_{ \Delta_k}$ differ by a conjugation by $k^{1/2}$.
We finally achieve the complete formula for the modular scalar curvature in terms of the log-Weyl factor $h$ and the associated modular derivative $\logmodop = -2\op{ad}_h$.

 \begin{thm}[Modular scalar curvature]
     \label{thm:scalcur-RD_h}
    For the perturbed Dirac operator $D_h = e^{h/2} \slashed D e^{h/2}$ which represents a noncommutative metric that is conformal to the original one, we have the following conformal change of scalar curvature: upto a overall constant defined in \eqref{eq:scalcur-overallcont}, 
    the associated scalar curvature density in \textup{Definition  \ref{defn:scalcur-RDh}} is given by:
    \begin{align}
            \label{eq:sclcur-RDlogk}
        R_{ D_h} &= \, e^{(-m+2)h} \brac{
            \mathcal K^{(m)}_{ D_h}(\logmodop) (\nabla^2 h) + 
    \mathcal H^{(m)}_{ D_h}(\logmodop_{(1)}, \logmodop_{(2)})(\nabla h \nabla h)
} g^{-1}  \\
&+c_{\Delta_k} e^{(-m+2)h} \mathcal S_\Delta , \,\,\, m=\dim M\nonumber,
    \end{align}
    with:
    \begin{align*}
        \mathcal K^{(m)}_{ D_h}(s) & = e^{s/4} K^{(m)}_{\Delta_k}(e^s) f_1(s/2), \\
       \mathcal H^{(m)}_{ D_h}(s,t) &= e^{s/4}e^{t/4} 
       \left( e^{s/2}  H^{(m)}_{\Delta_k}(e^s,e^t) f_1(s/2)f_1(t/2) \right. \\ 
       &+ \left. 2 g_2(s/2,t/2)  K^{(m)}_{\Delta_k}(e^{s+t}) 
\right).   
\end{align*}
The functions $f_1$ and $g_2$ are given in \eqref{eq:scalcur-f1g2} and \eqref{eq:scalcur-g2}, while for  $K_{\Delta_k}$ and $H_{\Delta_k}$, see equation \eqref{eq:scalcur-KDeltak} and  \eqref{eq:scalcur-HDeltak}. 
\end{thm}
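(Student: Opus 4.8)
\textbf{Proof plan for Theorem \ref{thm:scalcur-RD_h}.}
The statement is essentially a bookkeeping assembly of three results already in place, so the plan is to chain them together and track the conjugation factor $k^{\pm 1/2}$ carefully. First I would start from Theorem \ref{thm:scalcur-curforDeltak}, which gives the density $R_{\Delta_k}(k)$ in terms of $\nabla k$, $\nabla^2 k$ and the functions $K^{(m)}_{\Delta_k}$, $H^{(m)}_{\Delta_k}$. Next I would substitute the ``chain rule'' identities \eqref{eq:scalcur-nablak} and \eqref{eq:scalcur-nabla^2k}, which convert $\nabla k = k f_1(\logmodop/2)(\nabla h)$ and $\nabla^2 k = k\bigl(f_1(\logmodop/2)(\nabla^2 h)+2g_2(\logmodop_{(1)}/2,\logmodop_{(2)}/2)(\nabla h\nabla h)\bigr)$. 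Collecting the $\nabla^2 h$ term and the $\nabla h\nabla h$ terms separately — the latter receiving contributions both from the $\nabla^2 k$ expansion (via $g_2$) and from the product $(\nabla k)(\nabla k)$ (via $f_1 f_1$) — yields exactly the Proposition giving \eqref{eq:scacur-relation-RDeltak-RDeltalogk}, i.e. the intermediate density $R_{\Delta_k}(h)$ in \eqref{eq:scacur-RDeltalogk}.

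Then I would invoke \eqref{eq:scalcur-RDvsRDelta}, $R_{D_h}=k^{-1/2}R_{\Delta_k}k^{1/2}$, and observe that conjugation by $k^{1/2}=e^{h/2}$ acts on a function-valued expression built from $\logmodop$-actions simply by shifting: since $\logmodop=-2\op{ad}_h$, one has $e^{-h/2}\,\mathcal F(\logmodop)(X)\,e^{h/2} = e^{?/4}\,\mathcal F(\logmodop)(X)$ where the extra exponential factor records how the outer $k^{\pm1/2}$ gets absorbed through the multi-variable functional calculus. Concretely, for the one-variable term conjugation produces the factor $e^{s/4}$ and for the two-variable term $e^{s/4}e^{t/4}$, because each ``slot'' of $\nabla h$ or $\nabla^2 h$ picks up one half-power of $k$ from the conjugation distributed according to the grading. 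This is the step I regard as the only genuine subtlety: one must be careful that the conjugation commutes past the overall volume factor $k^{-m+2}$ (it does, being a power of $k$) and that the grading/slot conventions of $\mathcal K^{(m)}_{D_h}$ and $\mathcal H^{(m)}_{D_h}$ as multipliers are matched to how $\op{ad}_h$ acts on each tensor factor. Combining these shifts with the relations \eqref{eq:scacur-relation-RDeltak-RDeltalogk} gives precisely the claimed formulas for $\mathcal K^{(m)}_{D_h}$ and $\mathcal H^{(m)}_{D_h}$.

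Finally, the scalar-curvature term $c_{\Delta_k}k^{-m+2}\mathcal S_\Delta$ is unaffected: $\mathcal S_\Delta$ and $g^{-1}$ are torus-invariant hence central after deformation (as noted at the end of Section \ref{subsec:notationsforlater}), so conjugation by $k^{1/2}$ leaves it untouched and the substitution $k=e^h$ only rewrites $k^{-m+2}=e^{(-m+2)h}$. Assembling the three pieces — the $\nabla^2 h$ term, the $\nabla h\nabla h$ term, and the Riemannian remainder — gives \eqref{eq:sclcur-RDlogk}, and comparison with Definition \ref{defn:scalcur-RDh} via \eqref{eq:scalcur-defn-RD_h} (together with the identity $(\nabla^2 h)g^{-1}=-\Delta h$ and the deformed inner product interpretation of $(\nabla h\nabla h)g^{-1}$) completes the proof. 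The main obstacle, as indicated, is purely the correct propagation of the half-integer powers of $k$ through the functional calculus in several variables; there is no new analytic input beyond Theorem \ref{thm:scalcur-curforDeltak} and the elementary identities for $\nabla k$, $\nabla^2 k$.
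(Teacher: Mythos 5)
Your plan is correct and follows exactly the route the paper takes: substitute the chain-rule identities \eqref{eq:scalcur-nablak}--\eqref{eq:scalcur-nabla^2k} into Theorem \ref{thm:scalcur-curforDeltak} to obtain \eqref{eq:scacur-RDeltalogk} with the relations \eqref{eq:scacur-relation-RDeltak-RDeltalogk}, then apply the conjugation \eqref{eq:scalcur-RDvsRDelta}, which acts as $\modop^{1/4}=e^{\logmodop/4}$ and hence contributes the multiplier $e^{s/4}$ (resp.\ $e^{s/4}e^{t/4}$) per slot, leaving the central factors $k^{-m+2}$, $g^{-1}$ and $\mathcal S_\Delta$ untouched. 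No gaps; your identification of the half-power bookkeeping as the only subtle point matches the paper's own (largely implicit) argument.
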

Notations in \eqref{eq:sclcur-RDlogk} and \eqref{eq:scalcur-R-Deltak} are quite similar, thus we do not repeat the explanation again. However, it  worth mentioning that equation \eqref{eq:sclcur-RDlogk} gives a more transparent  comparison with the formula in Riemannian geometry \eqref{eq:scalcur-chofscalcur-commutative}. Indeed, one can get rid of the volume factor $e^{-mh}$ by 
using the new volume form $\varphi_h$ in \eqref{eq:scalcur-nontracial-vol} to define the action density instead of $\varphi_0$ in Definition \ref{defn:scalcur-RDh}. Also, observe that in the commutative setting $\logmodop = 2\op{ad}_h =0$, as a result, $\mathcal K_{\slashed D_h}(\logmodop) (\nabla^2 h) =\mathcal K_{\slashed D_h}(0) (\nabla^2 h)$ and $\mathcal H_{\slashed D_h}(\logmodop_{(1)}, \logmodop_{(2)})(\nabla h \nabla h) = \mathcal H_{\slashed D_h}(0,0)(\nabla h \nabla h)$,
therefore \eqref{eq:sclcur-RDlogk} is reduced to \eqref{eq:scalcur-R-Deltak} completely. Informally speaking, the deformation of the scalar curvature is obtained by adding extra action of modular derivations onto the conformal change of the scalar curvature in Riemannian geometry.  

The explicit expressions for $\mathcal K^{(4)}_{\slashed D_h}$ and $\mathcal H^{(4)}_{\slashed D_h}$ are listed in \eqref{eq:scalcur-dim4KandH_forRD_h}.

%-------------------------------------------
\section{The Einstein-Hilbert action }
\label{sec:scalcur-EHaction}
In this section, we will perform  similar calculation as in \cite{MR3194491} section 4 to study the variation of the noncommutative analog  of the  Einstein-Hilbert action. More precisely, we shall compute the variation in two ways, as a result, we obtain a abstract proof of the functional relations between  $\mathcal K^{(m)}_{\slashed D_h}$ and $\mathcal H^{(m)}_{\slashed D_h}$. Since  $\mathcal K^{(m)}_{\slashed D_h}$ and $\mathcal H^{(m)}_{\slashed D_h}$ has been computed explicitly, a
straightforward verification of the relations provides a conceptual confirmation of our computation. On the other hand, such relations will be a important key to gain more conceptual understanding about the two-variable function  $\mathcal H^{(m)}_{\slashed D_h}$. 
We will first state the relations in  section \ref{sec:scalcur-intrelation} and the proof occupies the later section \ref{sec:varofEH}.

\subsection{Internal relations between $\mathcal K^{(m)}_{ D_h}$ and $\mathcal H^{(m)}_{ D_h}$}
\label{sec:scalcur-intrelation}
Let $m = \dim M$ and $j_0= (m-2)/2$. We define:
%We introduce the following functions related to $\mathcal K^{(m)}_{\slashed D_h}$ and $\mathcal H^{(m)}_{\slashed D_h}$:
\begin{align}
    \label{eq:Keh-scalcur}
        K_{\op{EH}}(s) &= -8 j_0\frac{ \sinh(s/4)}{s} \mathcal K_{\slashed D_h}(s),\\
        H_{\op{EH}}(s,t) &=-8 j_0\frac{ \sinh((s+t)/4)}{s+t} \mathcal H_{\slashed D_h}(s,t).
    \label{eq:Heh-scalcur} 
    \end{align}
They are the  local curvature functions for the gradient of the Einstein-Hilbert action found in corollary \ref{cor:scalcur-KEHandHEH}. \par  
    To link $K_{\op{EH}}$ and $H_{\op{EH}}$, we need the following one variable functions:
\begin{align}
    T(s) &= 2j_0 \mathcal K_{\slashed D_h}(0) f_1(-j_0 s) + \mathcal H_{\slashed D_h}(s,-s), 
    \label{eq:scalcur-T} \\
    \tilde T(s) &= T(-s) e^{-s}
    \label{eq:scalcur-tildeT}
\end{align}
and two variable functions:
\begin{align}
  \label{eq:L-scalcur}
    L(s,t) &= -2 T(s) j_0  f_1(-j_0(s+t)), \\
   M(s,t) &= 2  e^{-j_0 (s+t)}
   \left[ \frac{T(-t) - T(s)}{s+t} + \frac{T(t) - T(-s)}{s+t} e^{j_0t} \right],
    \label{eq:M-scalcur}\\
\label{eq:P-scalcur}
   P(s,t) &=-2j_0  f_1(-j_0s) T(t) + (-2)\frac{T(s+t) - T(t)}{s} + 2\frac{T(s+t) - T(s)}{t}, \\
    Q(s,t) &=-2j_0  f_1(-j_0s) \tilde T(t)
    + (-2)\frac{\tilde T(s+t) - \tilde T(t)}{s} + 2\frac{\tilde T(s+t) -\tilde T(s)}{t}.
   \label{eq:Q-scalcur}
\end{align}
The geometric meaning of $T(s)$ is given in Prop. \ref{prop:scalcur-EHlocal}. 
In dimension four, that is $j_0 = 1$, all the functions $T, \tilde T, L, M, P, Q$ were computed in \cite{MR3369894}, Theorem 5.1. 

%and
% \begin{align}
%     \label{eq:P-scalcur}
%    P(s,t) &=-2j_0  f_1(-j_0s) T(t) + (-2)\frac{T(s+t) - T(t)}{s} + 2\frac{T(s+t) - T(s)}{t}, \\
%     Q(s,t) &=-2j_0  f_1(-j_0s) \tilde T(t)
%     + (-2)\frac{\tilde T(s+t) - \tilde T(t)}{s} + 2\frac{\tilde T(s+t) -\tilde T(s)}{t}.
%    \label{eq:Q-scalcur}
% \end{align}

\begin{thm}\label{thm:funrel-scalcur}
    Keep the notations as above. The internal relations are given by
    \begin{align}
        \label{eq:scalcur-relone}
        K_{\op{EH}}(s) &=- (T+\tilde T)(s) , \\
        H_{\op{EH}}(s,t) &=  (L+M-P-Q)(s,t). 
        \label{eq:scalcur-reltwo}
    \end{align}
\end{thm}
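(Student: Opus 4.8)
The plan is to extract the internal relations from the variation of the Einstein--Hilbert functional $F_{\mathrm{EH}}(h) = V_2(1, D_h^2)$, computed in two independent ways, and to match the two resulting expressions term by term. First I would set up the gradient of $F_{\mathrm{EH}}$: by the variation equation \eqref{eq:scalcur-varofheatop} (the perturbation formula for the heat operator) one can write $\frac{d}{d\epsilon}\big|_{\epsilon=0} V_2(1,D_{h+\epsilon a}^2) = \varphi_0\!\big(a\, \mathrm{grad}\,F_{\mathrm{EH}}(h)\big)$ for a self-adjoint variation $a \in C^\infty(M_\theta)$, and the content of Corollary \ref{cor:scalcur-KEHandHEH} is that $\mathrm{grad}\,F_{\mathrm{EH}}$ has the same shape as $R_{D_h}$ but with the local curvature functions replaced by $K_{\mathrm{EH}}$ and $H_{\mathrm{EH}}$ as in \eqref{eq:Keh-scalcur}--\eqref{eq:Heh-scalcur}. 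This is the ``geometric'' side, obtained by differentiating the explicit curvature formula \eqref{eq:sclcur-RDlogk} against $h$.

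Second, I would compute the same gradient the ``direct'' way, starting from Proposition \ref{prop:scalcur-EHlocal}, which packages $F_{\mathrm{EH}}$ (equivalently its density) into the single one-variable function $T(s)$ defined in \eqref{eq:scalcur-T}: taking $f=1$ collapses $\mathcal K^{(m)}_{D_h}$ and $\mathcal H^{(m)}_{D_h}$ into $T$ because the two tensorial terms $\nabla^2 h\cdot g^{-1}$ and $\nabla h\,\nabla h\cdot g^{-1}$ become, after integration by parts against the constant and using $\Delta h$, a single quadratic form in $h$ weighted by $T$ evaluated on $\logmodop$. Then I would differentiate this $T$-packaged functional. The variation produces several types of terms according to which operation is being varied: varying the weight factor $e^{-j_0(\cdot)h}$ (the volume-type exponential) gives the $f_1$-terms, varying the ``inner'' slot of a divided difference gives the terms of the form $\big(T(s+t)-T(t)\big)/s$, and swapping $\nabla$ with $\logmodop$ across the Leibniz rule produces the remaining contributions. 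Matching the one-variable output gives \eqref{eq:scalcur-relone}, where the $\tilde T(s)=T(-s)e^{-s}$ reflects the ``flip'' $a \leftrightarrow h$ symmetry (adjointness of $\logmodop$ under $\varphi_0$, cf. the twisted trace property). Matching the two-variable output against the decomposition $L+M-P-Q$ in \eqref{eq:L-scalcur}--\eqref{eq:Q-scalcur} gives \eqref{eq:scalcur-reltwo}: $L$ and $M$ come from varying $T(\logmodop)$ applied to $\nabla h\,\nabla h$ (product rule on the two $\nabla h$ slots, yielding the $f_1$-term $L$ and the divided-difference term $M$), while $P$ and $Q$ come from the ``integration by parts'' that converts $\varphi_0(a \cdot (\text{stuff}))$ into the canonical $\nabla^2, \nabla\nabla$ form, with $Q$ being the $\tilde T$-counterpart of $P$ forced again by the flip symmetry.

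Third, once both computations are in hand, the theorem is just the assertion that the two formulas for the same gradient agree; the content is entirely in carrying out the second (direct) variation correctly and in the bookkeeping that identifies each emergent term with exactly one of $L, M, P, Q$ (with the stated signs). I would organize this as a sequence of lemmas: (i) the first-variation formula for $V_2(1,D_h^2)$ from \eqref{eq:scalcur-varofheatop}; (ii) rewriting the variation of the $T$-packaged density using the operator calculus rules (Leibniz for $\nabla$, the divided-difference/chain rule for functions of $\logmodop$, and integration by parts under $\varphi_0$, all in the sense of \cite{leschdivideddifference}); (iii) collecting terms into the one- and two-variable pieces; (iv) invoking Corollary \ref{cor:scalcur-KEHandHEH} for the geometric side and equating.

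\textbf{Main obstacle.} The hard part will be step (ii)--(iii): the direct variation of the $T$-packaged functional is a genuine exercise in the noncommutative operator calculus where several rearrangements --- pushing the variation $a$ through functions of $\logmodop$, trading $\nabla$ for $\logmodop$, and doing integration by parts so that $a$ ends up as a bare left factor under $\varphi_0$ --- all produce divided differences, and one must keep careful track of (a) which variable of the two-variable functions is which, (b) the overall signs, and (c) the exact role of the flip $s\mapsto -s$, $T\mapsto\tilde T$ coming from the twisted-trace property of $\varphi_0$. Getting the combinatorics of the four-term split $L+M-P-Q$ exactly right, rather than up to some rearrangement identity, is where the real work lies; this is precisely the step that the paper advertises as giving ``strong conceptual confirmation'' of the \textsf{Mathematica} computation, so it must be done with full precision.
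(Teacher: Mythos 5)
Your proposal follows exactly the paper's route: the theorem is proved by computing $\grad_h F_{\mathrm{EH}}$ once via the heat-operator variation \eqref{eq:scalcur-varofheatop} (yielding Corollary \ref{cor:scalcur-KEHandHEH}) and once by directly varying the $T$-packaged local formula of Proposition \ref{prop:scalcur-EHlocal} through the Leibniz-rule decomposition handled in Lemmas \ref{lem:scalcur-localvar-1}--\ref{lem:scalcur-localvar-4} (yielding Proposition \ref{prop:scalcur-varlocalEH}), and then equating the two expressions for the same gradient. The only minor slip is the attribution late in your write-up of both $L$ and $M$ to the product rule on the two $\nabla h$ slots --- in fact $L$ arises from varying the exponential weight $e^{(2-m)h}$ and $M$ from varying $\logmodop_{h+\varepsilon a}$ inside $T$, while the $\nabla h$ slots produce $P$ and $Q$ after integration by parts --- but your earlier description of the term-by-term origins is correct and the overall argument matches the paper's.
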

\begin{proof}
    Based on the calculation in Section \ref{sec:varofEH}, the local curvature functions on the left hand side 
   appeared in Corollary \ref{cor:scalcur-KEHandHEH} while those on the right hand side are taken from Proposition \ref{prop:scalcur-varlocalEH}. 
  They  both represent the gradient of the Einstein-Hilbert action thus should be the same. 
\end{proof}
\begin{rem}
    The relations \eqref{eq:scalcur-relone} and \eqref{eq:scalcur-reltwo} has two significant applications: 
    \begin{enumerate}
        \item The explicit expressions of  $\mathcal K^{(m)}_{\slashed D_h}(s)$ and $\mathcal H^{(m)}_{\slashed D_h}(s,t)$ are obtained after enormous amount of computation. We obtain a strong conceptual confirmation for our calculation  
            by verifying they do satisfy \eqref{eq:scalcur-relone} and \eqref{eq:scalcur-reltwo}. 
        \item As explained in the introduction, there is some familiar pattern
           shown in $\mathcal K^{(m)}_{\slashed D_h}(s)$, but for the two-variable function $\mathcal H^{(m)}_{\slashed D_h}(s,t)$, we have very little conceptual understanding for the present. The relations \eqref{eq:scalcur-relone} and \eqref{eq:scalcur-reltwo} might provide a starting point when we know the one variable function better. 
    \end{enumerate}
\end{rem}

Let us perform a quick verification in dimension four for \eqref{eq:scalcur-relone} to illustrate the first remark above. When $m=\dim M =4$, $j_0 = (m-2)/2 = 1$, from Theorem \ref{thm:scalcur-RD_h}, we can compute explicitly: 
\begin{align}
    \begin{split}
         \mathcal K^{(4)}_{\slashed D_h}(u) & =-\frac{2 e^{-\frac{u}{2}} \sinh \left(\frac{u}{4}\right)}{u},
\\
          \mathcal H^{(4)}_{\slashed D_h}(u,v) & =
          \frac{4 e^{\frac{1}{4} (-3) (u+v)} \left(u \left(-e^{v/2}\right)+\left(e^{u/2}-1\right) e^{v/2} v+u\right)}{u v (u+v)}.
    \end{split}
    \label{eq:scalcur-dim4KandH_forRD_h}
\end{align}
In particular
\begin{align*}
    H^{(4)}_{\slashed D_h}(u,-u) = -\frac{-2 u-4 e^{-\frac{u}{2}}+4}{u^2}.
\end{align*}
Hence we can calculate $K_{\op{EH}}(u)$ from the right hand side of \eqref{eq:scalcur-relone}.
\begin{align}
    &\,\,T(u) + T(-u) e^{-u} \nonumber\\
=&\,\,    \frac{-2 u-4 e^{-\frac{u}{2}}+4}{u^2}-e^{-u} \left(-\frac{2 u-4 e^{u/2}+4}{u^2}-\frac{e^u-1}{u}\right)-\frac{e^{-u}-1}{u}.
\label{eq:scalcur-T+Tdim4}
\end{align}
On the other hand,  we compute the left hand side of \eqref{eq:scalcur-relone}  using \eqref{eq:Keh-scalcur}:
\begin{align}
    -8 j_0\frac{ \sinh(u/4)}{u} \mathcal K_{\slashed D_h}(u) =
   - \frac{8 \left(e^{-\frac{3 u}{4}}-e^{-\frac{u}{4}}\right) \sinh \left(\frac{u}{4}\right)}{u^2},
    \label{eq:scalcur-Kehdim}
\end{align}
Both of \eqref{eq:scalcur-T+Tdim4} and \eqref{eq:scalcur-Kehdim} can be simplified to $-4 e^{-u}u^{-2} \left(e^{u/2}-1\right)^2$. \par
Interested readers are encouraged to perform similar verification for relation \eqref{eq:scalcur-reltwo}. 

% \begin{align*}
%         T(s) = -(2-m)\mathcal K_{\slashed D_h}(0) f_1\left (\frac{(2-m)s}{2} \right)
%         + \mathcal H_{\slashed D_h}(s,-s),
%     \end{align*}
%------------------------------------------------
\subsection{Variation of the Einstein-Hilbert action}
\label{sec:varofEH}
If we think of the spectral triple $(C^\infty(M_\theta),L^2(\slashed S), \slashed D)$ as an noncommutative Riemannian manifold, a conformal change of metric is implemented by a twisted version $(C^\infty(M_\theta),L^2(\slashed S), D_h)$ with $D_h = e^{h/2}\slashed D e^{h/2}$ and $h=h^*\in C^\infty(M_\theta)$. 
% We take the curvature density shown in \eqref{eq:sclcur-RDlogk} as the scalar curvature for the twisted spectral triple $(C^\infty(M_\theta),L^2(\slashed S), D_h)$. As we have explained before, for the new metric associated to $D_h$, the volume form is given by the new state
% \begin{align*}
%     \varphi(f) \defeq \varphi_0(f e^{-mh}), \,\, \forall f \in C^\infty(M_\theta).
% \end{align*}
Then the straightforward analogue of the Einstein-Hilbert action (as a functional in $h$) is given by
\begin{align}
    \begin{split}
        F_{\op{EH}}(h) &\defeq V_2(1, D_h^2) =  \varphi_0(R_{\slashed D_h})  =
   \varphi_0 \left( e^{(2-m)h} 
    \mathcal K_{ D_h}(\logmodop) (\nabla^2 h) g^{-1} \right. \\ &\,\,\, \left.+ 
e^{(2-m)h} \mathcal H_{ D_h}(\logmodop_{(1)}, \logmodop_{(2)})(\nabla h \nabla h) g^{-1}
+ e^{(2-m)h} c_{\Delta_h}(m) \mathcal S_\Delta
\right).     
    \end{split}
    \label{eq:scalcur-EH-defn}
\end{align}
Let us make a remark about the comparison to the commutative situation. In Riemannian geometry, the Einstein-Hilbert action is defined as $F(g) = \int \mathcal S_{g} d\op{vol}_g$, where the volume form $d\op{vol}_g$ is replaced by the functional $\varphi_h$ defined in \eqref{eq:scalcur-nontracial-vol}. Nevertheless, the change of volume form is already included in the definition of $R_{\slashed D_h}$, that is why $R_{\slashed D_h}$ has the volume factor  $e^{(2-m)h}$ while for the scalar
curvature in
\eqref{eq:scalcur-chofscalcur-commutative} the factor is $e^{2h}$.  

Similar to the previous work in noncommutative tori, we would like to compute the gradient of the Einstein-Hilbert action in two different ways:
\begin{enumerate}
    \item The variation of $V_2(1, D_h^2)$,
    \item  The variation of the local formula $\varphi_0(R_{\slashed D_h})$.
     %appeared in the right side of  \eqref{eq:scalcur-EH-defn}.
\end{enumerate}
First, we recall the definition of G\^ateaux differential.
\begin{defn}
    \label{defn:scalcur-Gdiff-varation}
    For a fixed $h = h^* \in C^\infty(M_\theta)$, the gradient $\grad_h F_{\op{EH}} \in C^\infty(M_\theta)$  of  the Einstein-Hilbert action with respect to the functional $\varphi_0$ (see \eqref{eq:scalcur-vol-phi0}) is uniquely determined by the following property:
    \begin{align*}
        \varphi_0(a \grad_h F_{\op{EH}} ) = \frac{d}{d\varepsilon}\Big|_{\varepsilon=0}
F_{\op{EH}}(h+a\varepsilon), \,\,\, \forall a=a^* \in C^\infty(M_\theta).
    \end{align*}
\end{defn}

For the variation of the heat kernel, the computation is exactly given in the proof of  in \cite{MR3194491}, we recall the results below,  the proofs for Proposition  \ref{prop:scalcur-varofheatop} and  Corollary \ref{cor:scalcur-KEHandHEH} can be found in Theorem $4.8$ in \cite{MR3194491}. 
\begin{prop}
     \label{prop:scalcur-varofheatop}
    We fix a log-Weyl factor $h$, for any $a \in C^\infty(M_\theta)$, let $$D_\varepsilon \defeq D_{h+a\varepsilon}=e^{(h+\varepsilon a)/2}\slashed D e^{(h+\varepsilon a)/2}.$$ The variation $V_2(1, D^2_\varepsilon)$ is given by:
    \begin{align}
        \label{eq:scalcur-varofheatop}
        \frac{d}{d\varepsilon}\Big|_{\varepsilon=0}   V_2(1, D^2_\varepsilon) = \frac{2-m}{2} V_2( \int_{-1}^1 
        e^{uh/2} a e^{-uh/2} du, D^2_h
        ) .
    \end{align}
\end{prop}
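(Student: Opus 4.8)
The plan is to compute $\frac{d}{d\varepsilon}\big|_{\varepsilon=0} V_2(1, D_\varepsilon^2)$ via the zeta-function (or, equivalently, Wodzicki-residue) description of $V_2$ recorded in \eqref{eq:scalcur-compare-V_2-dashint}, namely $V_2(1, D_h^2) = \dashint D_h^{-m+2} = \op{Res}_{z=0}\Tr(D_h^{-z})\big|_{z=m-2}$, and then to differentiate the family of operators $D_\varepsilon = e^{(h+\varepsilon a)/2}\slashed D\, e^{(h+\varepsilon a)/2}$ in $\varepsilon$. Since $D_\varepsilon$ is self-adjoint with compact resolvent, one has the standard formula $\frac{d}{d\varepsilon}\Tr(D_\varepsilon^{-z}) = -z\,\Tr\big(\dot D_\varepsilon D_\varepsilon^{-z-1}\big)$ whenever $\dot D_\varepsilon$ is relatively bounded, which it is here because $\dot D_0 = \frac12(a\, D_h + D_h\, a)$ after a short computation from the product rule. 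The residue at $z=m-2$ then produces $-(m-2)\dashint \dot D_0 D_h^{-m+1}$, and the task reduces to rewriting this Wodzicki residue as a $V_2$-functional of an explicit function of $a$ and $h$.

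First I would carry out the differentiation carefully: from $D_\varepsilon = e^{(h+\varepsilon a)/2}\slashed D\, e^{(h+\varepsilon a)/2}$ one gets, using that $\frac{d}{d\varepsilon}\big|_0 e^{(h+\varepsilon a)/2} = \int_0^{1/2} e^{sh} \tfrac{a}{2} e^{(1/2-s)h}\,ds$ (the Duhamel/noncommutative chain rule), an expression for $\dot D_0$. It is cleaner, though, to work at the level of $D_h^2$ or to conjugate: since $\Tr(D_\varepsilon^{-z})$ is conjugation-invariant, I can replace $D_\varepsilon$ by any operator unitarily — or even just similarity- — equivalent to it with the same spectrum, and in particular relate it to $\Delta_{k_\varepsilon}$ with $k_\varepsilon = e^{h+\varepsilon a}$ as in \eqref{eq:scacur-defn-Deltak}. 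The key algebraic identity to extract is that differentiating $k_\varepsilon = e^{h+\varepsilon a}$ gives $\dot k_0 = \int_0^1 e^{sh} a\, e^{(1-s)h}\,ds = k\int_0^1 e^{-s\op{ad}_h}(a)\,ds$, and after the symmetric conjugations in $D_\varepsilon$ the relevant combination that appears inside the trace is exactly $\int_{-1}^1 e^{uh/2} a\, e^{-uh/2}\,du$; this is where the symmetric interval $[-1,1]$ and the half-powers $e^{\pm uh/2}$ come from. The overall factor $\frac{2-m}{2}$ is then the product of the $-z = -(m-2)$ coming from differentiating the power with a factor $\frac12$ coming from the two symmetric half-exponentials in $D_h = k^{1/2}\slashed D\, k^{1/2}$.

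Concretely, the steps in order: (1) write $V_2(1,D_\varepsilon^2) = \op{Res}_{z=0}\Tr(D_\varepsilon^{-z})$ evaluated appropriately; (2) justify differentiation under the residue (the asymptotic \eqref{eq:scalcur-heatasymp} with uniform-in-$\varepsilon$ control, or equivalently analyticity of $z\mapsto \Tr(D_\varepsilon^{-z})$ in a neighbourhood, gives this); (3) apply $\frac{d}{d\varepsilon}\Tr(D_\varepsilon^{-z}) = -z\Tr(\dot D_\varepsilon D_\varepsilon^{-z-1})$, valid inside the trace by cyclicity and the pseudodifferential calculus developed in \cite{Liu:2015aa}; (4) compute $\dot D_0$ via Duhamel and use the trace property to move the various $e^{\pm sh}$ factors around, collapsing the $s$-integral against the symmetric structure into $\int_{-1}^1 e^{uh/2}a\, e^{-uh/2}\,du$; (5) recognise the resulting residue as $\frac{2-m}{2}\dashint \big(\int_{-1}^1 e^{uh/2}ae^{-uh/2}du\big) D_h^{-m+2}$, which by \eqref{eq:scalcur-compare-V_2-dashint} is $\frac{2-m}{2} V_2\big(\int_{-1}^1 e^{uh/2}ae^{-uh/2}du, D_h^2\big)$. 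The main obstacle I anticipate is step (4): organising the Duhamel integral for $\dot D_0$ and using only cyclicity of the (non-tracial-looking but actually tracial) Wodzicki residue to fold the $s\in[0,1/2]$ integrals from both half-exponentials into the single symmetric $u\in[-1,1]$ integral, without accidentally dropping a commutator term — this is precisely the bookkeeping that Connes–Moscovici carry out in the proof of Theorem 4.8 of \cite{MR3194491}, and the claim is that it goes through verbatim in the Connes–Landi setting because the deformed pseudodifferential calculus of \cite{Liu:2015aa} obeys the same trace and symbol-calculus rules.
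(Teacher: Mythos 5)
Your proposal is correct and follows essentially the route the paper itself takes: the paper gives no independent proof of Proposition \ref{prop:scalcur-varofheatop} but defers to Theorem 4.8 of \cite{MR3194491}, and your reconstruction --- differentiate $\Tr\bigl((D_\varepsilon^2)^{-z/2}\bigr)$ (equivalently the heat trace) in $\varepsilon$, use cyclicity to reduce $\frac{d}{d\varepsilon}\big|_{0} D_\varepsilon^2$ inside the trace to $\bigl(\dot A A^{-1}+A^{-1}\dot A\bigr)D_h^2$ with $A_\varepsilon=e^{(h+\varepsilon a)/2}$, and fold the two Duhamel integrals into $\frac12\int_{-1}^1 e^{uh/2}ae^{-uh/2}\,du$ --- is exactly that argument, with $\frac{2-m}{2}$ arising as $-(m-2)\cdot\frac12$ as you anticipate. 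The only slip is the parenthetical claim $\dot D_0=\frac12(aD_h+D_h a)$, which holds only commutatively; the correct identity is $\dot D_0=(\dot A A^{-1})D_h+D_h(A^{-1}\dot A)$ with $\dot A A^{-1}=\frac12\int_0^1 e^{sh/2}ae^{-sh/2}\,ds$ and $A^{-1}\dot A=\frac12\int_{-1}^0 e^{uh/2}ae^{-uh/2}\,du$, but since you then recompute $\dot D_0$ via Duhamel in step (4) this does not affect the argument.
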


\begin{cor}
    \label{cor:scalcur-KEHandHEH}
   Let $m = \dim M$ and $j_0 = (m-2)/2$. The gradient $\grad_h \op{EH}$ is given by
    \begin{align*}
        \grad_h F_{\op{EH}}  &=  e^{-2j_0h} \brac{ K_{\op{EH}}(\logmodop)(\nabla^2 h) + 
        H_{\op{EH}}(\logmodop,\logmodop)(\nabla h \nabla h)} g^{-1} \\
        &+ (-2j_0)c(m)  e^{-2j_0h}  \mathcal S_\Delta, 
    \end{align*}
    where 
    \begin{align*}
        K_{\op{EH}}(s) &= -8 j_0\frac{ \sinh(s/4)}{s} \mathcal K_{\slashed D_h}(s),\\
        H_{\op{EH}}(s,t) &=-8 j_0\frac{ \sinh((s+t)/4)}{s+t} \mathcal H_{\slashed D_h}(s,t).
    \end{align*}
\end{cor}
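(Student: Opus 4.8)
The plan is to obtain $\grad_h F_{\op{EH}}$ directly from the G\^ateaux formula of Definition \ref{defn:scalcur-Gdiff-varation}, feeding in the variation of the heat coefficient already recorded in Proposition \ref{prop:scalcur-varofheatop}; this is exactly the argument in Theorem $4.8$ of \cite{MR3194491}, which I outline here in our notation. Since $F_{\op{EH}}(h) = V_2(1,D_h^2)$, Definition \ref{defn:scalcur-Gdiff-varation} together with Proposition \ref{prop:scalcur-varofheatop} gives, for every self-adjoint $a \in C^\infty(M_\theta)$,
\[
\varphi_0(a\,\grad_h F_{\op{EH}}) \;=\; \frac{d}{d\varepsilon}\Big|_{\varepsilon=0} V_2(1,D_\varepsilon^2) \;=\; \frac{2-m}{2}\,V_2\!\left(\int_{-1}^{1} e^{uh/2}\,a\,e^{-uh/2}\,du,\ D_h^2\right).
\]
Rewriting $V_2(f,D_h^2) = \varphi_0(f R_{D_h})$ via Definition \ref{defn:scalcur-RDh}, the whole problem reduces to moving the averaging operation $a \mapsto \int_{-1}^1 e^{uh/2}a\,e^{-uh/2}\,du$ off of $a$ and onto $R_{D_h}$ inside the trace $\varphi_0$, and then reading off $\grad_h F_{\op{EH}}$.

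The next step uses that $\varphi_0$ is a trace: cyclicity yields $\varphi_0\big((\int_{-1}^1 e^{uh/2}a\,e^{-uh/2}\,du)\,R_{D_h}\big) = \varphi_0\big(a\,\int_{-1}^1 e^{-uh/2}R_{D_h}\,e^{uh/2}\,du\big)$, and since $[-1,1]$ is symmetric the sign flip in the conjugation is immaterial. Recognizing $e^{-uh/2}(\cdot)e^{uh/2} = e^{(u/4)\logmodop}$ with $\logmodop = -2\op{ad}_h$ and integrating in $u$, the conjugation average equals the functional-calculus operator $\dfrac{8\sinh(\logmodop/4)}{\logmodop}$, which extends holomorphically across $\logmodop = 0$ with value $2$ there, so there is no singularity. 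Hence $\varphi_0(a\,\grad_h F_{\op{EH}}) = \tfrac{2-m}{2}\,\varphi_0\big(a\cdot \tfrac{8\sinh(\logmodop/4)}{\logmodop}(R_{D_h})\big)$ for all self-adjoint $a$, and the uniqueness clause of Definition \ref{defn:scalcur-Gdiff-varation} together with $\tfrac{2-m}{2} = -j_0$ gives
\[
\grad_h F_{\op{EH}} \;=\; -j_0\,\frac{8\sinh(\logmodop/4)}{\logmodop}\big(R_{D_h}\big).
\]

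It remains to evaluate the right-hand side on the explicit expression \eqref{eq:sclcur-RDlogk} for $R_{D_h}$. Here I would use that $e^{-2j_0h}$, the dual metric $g^{-1}$ and the Riemannian scalar curvature $\mathcal S_\Delta$ are all central for $\op{ad}_h$ (the first is a function of $h$, the other two are torus-invariant, cf.\ the end of Section \ref{subsec:notationsforlater}), so $\logmodop$ acts only on the factors $\nabla^2 h$ and $\nabla h\,\nabla h$; and that, being a derivation, $\logmodop$ acts on the product $\nabla h\,\nabla h$ through its coproduct $\logmodop_{(1)}+\logmodop_{(2)}$. Consequently $\tfrac{8\sinh(\logmodop/4)}{\logmodop}$ turns $\mathcal K_{D_h}(\logmodop)$ into $\tfrac{8\sinh(\logmodop/4)}{\logmodop}\,\mathcal K_{D_h}(\logmodop)$ on the $\nabla^2 h$ term, turns $\mathcal H_{D_h}(\logmodop_{(1)},\logmodop_{(2)})$ into $\tfrac{8\sinh((\logmodop_{(1)}+\logmodop_{(2)})/4)}{\logmodop_{(1)}+\logmodop_{(2)}}\,\mathcal H_{D_h}(\logmodop_{(1)},\logmodop_{(2)})$ on the $\nabla h\,\nabla h$ term, and acts on the curvature term by its scalar value $2$ at $\logmodop = 0$; multiplying through by $-j_0$ reproduces exactly $K_{\op{EH}}$, $H_{\op{EH}}$ and the constant $(-2j_0)c(m)$ in the statement.

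The one place where care is needed — and I expect it to be the main obstacle — is the bookkeeping in the trace manipulation and the coproduct step: one must verify that the functional calculus of $\logmodop$ on tensor- and spinor-valued expressions commutes with the contraction against the central tensor $g^{-1}$, that the conjugation average is applied to the full two-sided module action rather than merely to $C^\infty(M_\theta)$, and that the symmetry of the interval $[-1,1]$ genuinely removes the ambiguity in the sign of the conjugation. None of this is deep; it is precisely the computation of Theorem $4.8$ in \cite{MR3194491}, transported to the present spin setting.
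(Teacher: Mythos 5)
Your proposal is correct and follows essentially the same route as the paper, which itself defers the proof of this corollary to Theorem 4.8 of \cite{MR3194491}: combine Definition \ref{defn:scalcur-Gdiff-varation} with Proposition \ref{prop:scalcur-varofheatop}, use the traciality of $\varphi_0$ to transfer the conjugation average $\int_{-1}^{1}e^{uh/2}(\cdot)e^{-uh/2}\,du = \frac{8\sinh(\logmodop/4)}{\logmodop}$ onto $R_{D_h}$, and then let this operator act on \eqref{eq:sclcur-RDlogk} through the coproduct $\logmodop_{(1)}+\logmodop_{(2)}$, the central factors $e^{-2j_0h}$, $g^{-1}$, $\mathcal S_\Delta$ being inert. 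Your factor bookkeeping ($\tfrac{2-m}{2}=-j_0$, the value $2$ at $\logmodop=0$ for the scalar-curvature term) reproduces $K_{\op{EH}}$, $H_{\op{EH}}$ and $(-2j_0)c(m)$ exactly as stated.
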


Now let us compute the variation from the second perspective. Due to  the trace property of $\varphi_0$:
\begin{align*}
    \varphi_0(\modop(\psi)) = \varphi_0(e^{-2h} \psi  e^{2h}) = \varphi_0(\psi), \,\,\,  \forall 
    \psi \in \Gamma(\End(\slashed S)).
\end{align*}
The local formula in \eqref{eq:scalcur-EH-defn} can be simplified to the one given in Proposition \ref{prop:scalcur-EHlocal}. 

%We have the following lemma:
\begin{lem}
    For Schwartz functions $T(s)$ and $H(s,t)$, we have
    \begin{align*}
        \varphi_0\brac{e^{-2j_0h} T(\logmodop)(\nabla^2 h) g^{-1}} = T(0) 
        \varphi_0\brac{e^{-2j_0h} (\nabla^2 h) g^{-1}}
    \end{align*}
    and
    \begin{align*}
        \varphi_0\brac{e^{-2j_0h} H(\logmodop,\logmodop)(\nabla h\nabla h) g^{-1}
    } = \varphi_0\brac{ e^{-2j_0h} G(\logmodop)(\nabla h) (\nabla h) g^{-1}
},
    \end{align*}
with $G(s) = H(s,-s)$.
\end{lem}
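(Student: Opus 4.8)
The plan is to reduce both identities to \emph{a single principle}: the weighted functional $\psi \mapsto \varphi_0\brac{e^{-2j_0 h}\,\cdot\,}$ annihilates every total $\logmodop$-derivative. First I would establish this. For any $\psi$ carrying a two-sided $C^\infty(M_\theta)$-action (a tensor or spinor field over $M_\theta$), expand $\logmodop(\psi) = 2(\psi h - h\psi)$; since $\varphi_0$ is a trace and $h$ commutes with $e^{-2j_0 h}$ (both are functions of the single element $h$), cycling $h$ gives $\varphi_0\brac{e^{-2j_0 h}\psi h} = \varphi_0\brac{h\,e^{-2j_0 h}\psi} = \varphi_0\brac{e^{-2j_0 h}h\psi}$, hence $\varphi_0\brac{e^{-2j_0 h}\logmodop(\psi)} = 0$. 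Since $\logmodop^{n}(\psi) = \logmodop\brac{\logmodop^{n-1}\psi}$ and $\logmodop^{n-1}\psi$ is again such a field, it follows that $\varphi_0\brac{e^{-2j_0 h}\logmodop^{n}(\psi)} = 0$ for every $n \ge 1$.

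For the first identity I would then expand $T(\logmodop)$ in powers of $\logmodop$ about $0$ (equivalently, use the Fourier inversion $T(\logmodop)(\psi) = \tfrac{1}{2\pi}\int_{\R}\hat T(\xi)\,\modop^{i\xi}(\psi)\,d\xi$ with $\modop^{i\xi}(\psi) = e^{-2i\xi h}\psi\,e^{2i\xi h}$, then cycle $e^{2i\xi h}$ past $e^{-2j_0 h}$ under the trace and cancel it against $e^{-2i\xi h}$): by the principle above only the constant term $T(0)$ survives under $\varphi_0\brac{e^{-2j_0 h}\,\cdot\,}$. Taking $\psi = \nabla^2 h$ and afterwards contracting with the torus-invariant, hence central, tensor $g^{-1}$ --- which commutes both with $\logmodop$ and with $\varphi_0\brac{e^{-2j_0 h}\,\cdot\,}$ --- yields the first formula.

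For the second identity I would reduce, by linearity and the Fourier inversion formula, to monomials $H(s,t) = s^p t^q$, for which $H(\logmodop_{(1)}, \logmodop_{(2)})(\nabla h\,\nabla h) = (\logmodop^p \nabla h)\otimes_\theta(\logmodop^q \nabla h)$, and then prove by induction on $q$ that $\varphi_0\brac{e^{-2j_0 h}(\logmodop^p\nabla h)(\logmodop^q\nabla h)g^{-1}} = (-1)^q\varphi_0\brac{e^{-2j_0 h}(\logmodop^{p+q}\nabla h)(\nabla h)g^{-1}}$. The inductive step feeds the deformed tensor $(\logmodop^p\nabla h)\otimes_\theta(\logmodop^q\nabla h)$ into the principle of the first paragraph and uses that $\logmodop$ is a derivation of $\otimes_\theta$ (and commutes with the contraction by $g^{-1}$) to get $\varphi_0\brac{e^{-2j_0 h}(\logmodop^{p+1}\nabla h)(\logmodop^q\nabla h)g^{-1}} = -\varphi_0\brac{e^{-2j_0 h}(\logmodop^p\nabla h)(\logmodop^{q+1}\nabla h)g^{-1}}$, after which the induction hypothesis with $p$ replaced by $p+1$ closes the loop. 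Summing over $H$ and recognising $\sum_{p,q} c_{pq}(-1)^q s^{p+q} = H(s,-s) = G(s)$ produces $\varphi_0\brac{e^{-2j_0 h}G(\logmodop)(\nabla h)(\nabla h)g^{-1}}$ on the right, with $G(\logmodop)$ acting on the first $\nabla h$ only; it is the splitting $\logmodop = \logmodop_{(1)} + \logmodop_{(2)}$ on two-fold tensors that forces the substitution $t \mapsto -s$ in $H$.

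I expect the only genuine work to be bookkeeping rather than ideas: justifying the smooth functional calculus of the bounded operator $\logmodop$ and the interchange of the $\xi$-integral with $\varphi_0$ for Schwartz $T$ and $H$, and verifying --- within the deformation framework of Section \ref{sec:mfld-notionsforCLdeformation} --- that $\logmodop$ is genuinely a derivation of the deformed tensor products $\otimes_\theta$ and commutes with contraction against the central tensor $g^{-1}$. All the conceptual content is in the traciality of $\varphi_0$ together with that splitting of $\logmodop$.
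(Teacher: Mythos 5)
Your proposal is correct and is essentially the argument the paper intends: the paper states this lemma without a written proof, justifying it only by the preceding remark on the trace property of $\varphi_0$, and your two identities follow exactly from spelling that out --- the weighted trace $\varphi_0(e^{-2j_0h}\,\cdot\,)$ kills total $\logmodop$-derivatives because $h$ commutes with $e^{-2j_0h}$, which forces $T(\logmodop)\mapsto T(0)$ in the one-variable case and $\logmodop_{(2)}\mapsto-\logmodop_{(1)}$ (hence $H(s,t)\mapsto H(s,-s)$) in the two-variable case. The only point to tidy in a write-up is to run the argument through the Fourier-inversion form $T(\logmodop)=\frac{1}{2\pi}\int\hat T(\xi)\,\modop^{i\xi}\,d\xi$ rather than through monomials, since a Schwartz function is not a convergent power series; you already note this alternative, so nothing essential is missing.
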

As consequences, we get
% \eqref{eq:scalcur-EH-defn} can be simplified as follows. First, for the one-variable function $\mathcal K_{\slashed D_h}$, only the value at zero has nontrivial contribution:
%Arguing like the proof of Theorem 5.2 and 5.3 in \cite{MR3359018}, we have
\begin{align*}
   \varphi_0\brac{
   e^{(2-m)h} \mathcal K_{\slashed D_h}(\logmodop) (\nabla^2 h) g^{-1}
    }&= \mathcal K_{\slashed D_h}(0) \varphi_0(e^{(2-m)h} (\nabla^2 h)g^{-1}), \\
    \varphi_0 \brac{
    e^{(2-m)h}\mathcal H_{\slashed D_h}(\logmodop_{(1)}, \logmodop_{(2)})(\nabla h \nabla h) g^{-1}
    }& =
    \varphi_0 \brac{
    e^{(2-m)h} G(\logmodop)(\nabla h) (\nabla h) g^{-1}
    },
\end{align*}
where $G(s) = \mathcal H_{\slashed D_h}(s,-s)$. 
% Also,  the two-variable function $\mathcal H_{\slashed D_h}(s,t)$ is reduced to 
% \begin{align*}
%     G(s) = \mathcal H_{\slashed D_h}(s,-s),
% \end{align*}
% namely:
% \begin{align*}
%     \varphi_0 \brac{
%     e^{(2-m)h}\mathcal H_{\slashed D_h}(\logmodop_{(1)}, \logmodop_{(2)})(\nabla h \nabla h) g^{-1}
%     } =
%     \varphi_0 \brac{
%     e^{(2-m)h} G(\logmodop)(\nabla h) (\nabla h) g^{-1}
%     }.
% \end{align*}
The two terms above on the right hand side are indeed of the same form by an  
integration by parts argument
\begin{align*}
    \varphi_0(e^{(2-m)h} (\nabla^2 h)g^{-1})& = -\varphi_0\brac{
        (\nabla e^{(2-m)h})(\nabla h) g^{-1}
    }\\ 
    &= -(2-m)  \varphi_0 \brac{ f_1 \brac{(2-m) \logmodop/2}(\nabla h) (\nabla h)g^{-1} }.
\end{align*}

So far, we have proved the following proposition.

\begin{prop}
    \label{prop:scalcur-EHlocal}
    The Einstein-Hilbert action \eqref{eq:scalcur-EH-defn}     
%     \begin{align*}
%         F(h)& = \varphi_0\brac{
%    e^{(2-m)h} \mathcal K_{\slashed D_h}(\logmodop) (\nabla^2 h) g^{-1}
%     }\\ 
%     &+  \varphi_0 \brac{
%     e^{(2-m)h}\mathcal H_{\slashed D_h}(\logmodop_{(1)}, \logmodop_{(2)})(\nabla h \nabla h) g^{-1}
%     }.
%     \end{align*}
%     Then $F(h)$ 
    can be simplified to the following form:
    \begin{align}
        \label{eq:scalcur-EHlocalsimplfied}
        F_{\mathrm{EH}} (h) =   \varphi_0 \brac{
     e^{(2-m)h} T(\logmodop)(\nabla h) (\nabla h) g^{-1}  
 } + \varphi_0\brac{
     e^{(2-m)h} c_{\Delta_k} \mathcal S_\Delta
 },
    \end{align}
    where
    \begin{align*}
        T(s) = -(2-m)\mathcal K_{\slashed D_h}(0) f_1\left (\frac{(2-m)s}{2} \right)
        + \mathcal H_{\slashed D_h}(s,-s),
    \end{align*}
    where $f_1(s)$ is defined in \eqref{eq:scalcur-f1g2}.
\end{prop}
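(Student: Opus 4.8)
The plan is to feed the closed form of $R_{\slashed D_h}$ from Theorem~\ref{thm:scalcur-RD_h} into $F_{\op{EH}}(h)=V_2(1,D_h^2)=\varphi_0(R_{\slashed D_h})$ and then simplify the three resulting terms one at a time. The scalar--curvature term $\varphi_0(e^{(2-m)h}c_{\Delta_k}\mathcal S_\Delta)$ needs no work: $\mathcal S_\Delta$ is torus--invariant, hence central after deformation, so this term already appears in the desired form. Everything therefore reduces to the two ``quantum'' terms, the one carrying $\nabla^2 h$ and the one carrying $\nabla h\,\nabla h$, and the goal is to show that after integration by parts they merge into a single expression of the shape $\varphi_0(e^{(2-m)h}T(\logmodop)(\nabla h)(\nabla h)g^{-1})$.

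For the $\nabla^2 h$ term I would invoke the Lemma stated just above together with the trace property of $\varphi_0$: since $\varphi_0$ is a trace, $g^{-1}$ is central, and $\modop^s=e^{s\logmodop}$ is implemented by the exponentials $e^{-2sh}(\,\cdot\,)e^{2sh}$, which commute with the prefactor $e^{(2-m)h}$, one can cycle every modular factor past $e^{(2-m)h}$ and cancel it; the functional calculus then collapses to evaluation at $\logmodop=0$, giving $\mathcal K_{\slashed D_h}(0)\,\varphi_0(e^{(2-m)h}(\nabla^2 h)g^{-1})$. For the $\nabla h\,\nabla h$ term the same Lemma, applied to the two--variable function $\mathcal H_{\slashed D_h}(\logmodop_{(1)},\logmodop_{(2)})$, transports the second modular slot through to the opposite side under the trace and collapses $\mathcal H_{\slashed D_h}(s,t)$ onto its anti--diagonal, producing $\varphi_0(e^{(2-m)h}\,G(\logmodop)(\nabla h)(\nabla h)g^{-1})$ with $G(s)=\mathcal H_{\slashed D_h}(s,-s)$.

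It remains to bring the $\nabla^2 h$ contribution into the same shape as the $\nabla h\,\nabla h$ one. Integrating by parts under $\varphi_0$ (the integral of a divergence over the closed manifold $M$ vanishes) moves $\nabla$ onto $e^{(2-m)h}$, and the noncommutative chain rule of \eqref{eq:scalcur-nablak}, applied to $e^{(2-m)h}$, gives $\nabla e^{(2-m)h}=(2-m)\,e^{(2-m)h}\,f_1((2-m)\logmodop/2)(\nabla h)$; contracting the remaining one--form with $\nabla h$ and $g^{-1}$ turns the term into $-(2-m)\,\mathcal K_{\slashed D_h}(0)\,\varphi_0(e^{(2-m)h}f_1((2-m)\logmodop/2)(\nabla h)(\nabla h)g^{-1})$. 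Adding this to the collapsed $\mathcal H$--term and reading off the common prefactor $e^{(2-m)h}(\nabla h)(\nabla h)g^{-1}$ yields exactly $T(s)=-(2-m)\mathcal K_{\slashed D_h}(0)f_1((2-m)s/2)+\mathcal H_{\slashed D_h}(s,-s)$, which is the assertion (and, with $j_0=(m-2)/2$, agrees with \eqref{eq:scalcur-T}). The only step that is not pure bookkeeping is the transport and cancellation of the modular operators under the trace --- i.e. verifying that $\nabla^2 h$ and each factor $\nabla h$ interact with the exponentials $e^{ch}$ exactly so that the one-- and two--variable collapses (the latter onto the anti--diagonal, with the right sign) go through; this is precisely the content of the Lemma above, so conditional on that input the proof is a short calculation. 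For $m=2$ the $\nabla^2 h$ contribution simply drops out, consistent with the degeneracy noted after Proposition~\ref{prop:scalcur-varofheatop}.
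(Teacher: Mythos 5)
Your proposal is correct and follows essentially the same route as the paper: apply the trace/modular Lemma preceding the proposition to collapse $\mathcal K_{\slashed D_h}(\logmodop)$ to $\mathcal K_{\slashed D_h}(0)$ and $\mathcal H_{\slashed D_h}(\logmodop_{(1)},\logmodop_{(2)})$ to its anti-diagonal, then integrate by parts and use the chain rule \eqref{eq:scalcur-nablak} for $\nabla e^{(2-m)h}$ to convert the $\nabla^2 h$ term into a $(\nabla h)(\nabla h)$ term carrying $f_1((2-m)\logmodop/2)$. The signs and the identification $-(2-m)=2j_0$ match \eqref{eq:scalcur-T}, so nothing further is needed.
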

 Based on \eqref{eq:scalcur-EHlocalsimplfied}, the calculation for the variation $\frac{d}{d\varepsilon}|_{\varepsilon = 0} F(h+\varepsilon a)$ is quite similar to the  calculation shown in \cite{MR3369894}, Theorem 5.1. 

 Let $a \in C^\infty(M_\theta)$ be a self-adjoint operator.  Recall $\logmodop = -2\op{ad}_h$ and denote $$\logmodop_{h+\varepsilon a} = -2\op{ad}_{h+\varepsilon a} = \logmodop- 2\varepsilon \op{ad}_a.$$ 

 \begin{prop}
     \label{prop:scalcur-varlocalEH}
     Let $m=\dim M$ and $j_0 = (m-2)/2$ as before.   For any $h=h^* \in C^\infty(M_\theta)$, the gradient of the action $F_{\mathrm{EH}}$ along $h$ $($see \textup{Definition  \ref{defn:scalcur-RDh}}$)$ is equal to
     \begin{align}
      \label{eq:scalcur-varlocalEH}
         \grad_h   F_{\mathrm{EH}} &=  e^{-2j_0h} \brac{\tilde K_{\op{EH}}(\logmodop)(\nabla^2 h) + 
      \tilde  H_{\op{EH}}(\logmodop,\logmodop)(\nabla h \nabla h)} g^{-1} \\
        &+ (-2j_0)c(m)  e^{-2j_0h}  \mathcal S_\Delta, 
     \end{align}
     where
     \begin{align*}
         \begin{split}
         \tilde K_{\op{EH}}(s) & = - (T+\tilde T)(s)  \\
\tilde H_{\op{EH}}(s,t) & = (L+M-P-Q)(s,t),
         \end{split}
     \end{align*}
     where $T$ and $\tilde T$ are defined in \eqref{eq:scalcur-T} and \eqref{eq:scalcur-tildeT} and for $L, M, P$ and $Q$, see \eqref{eq:L-scalcur}, \eqref{eq:M-scalcur}, \eqref{eq:P-scalcur} and \eqref{eq:Q-scalcur}.  
 \end{prop}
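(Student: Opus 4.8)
The plan is to compute the G\^ateaux differential of the simplified Einstein-Hilbert action \eqref{eq:scalcur-EHlocalsimplfied} directly, tracking how each building block responds to the perturbation $h \mapsto h + \varepsilon a$, and then organize the resulting terms into the four contributions $L, M, P, Q$. Concretely, $F_{\mathrm{EH}}(h)$ has two pieces: the $\mathcal S_\Delta$-term, whose variation only sees the volume factor $e^{(2-m)h}$ and immediately produces the $(-2j_0)c(m) e^{-2j_0 h}\mathcal S_\Delta$ term after an application of \eqref{eq:introv2-f1} together with the trace property of $\varphi_0$; and the main term $\varphi_0\bigl(e^{(2-m)h} T(\logmodop)(\nabla h)(\nabla h) g^{-1}\bigr)$. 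For the latter I would differentiate the three $\varepsilon$-dependent factors in turn: (i) the explicit conformal weight $e^{(2-m)h}$, (ii) the two gradients $\nabla h$, each of which contributes a $\nabla a$, and (iii) the operator $T(\logmodop)$ itself, since $\logmodop_{h+\varepsilon a} = \logmodop - 2\varepsilon\op{ad}_a$ so that $\tfrac{d}{d\varepsilon}|_0 T(\logmodop_{h+\varepsilon a})$ is a divided-difference (first-order Daletskii--Krein) expression in $T$ paired against $\op{ad}_a$.

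Once these raw variations are in hand, the next step is to massage each into the canonical shape $\varphi_0\bigl(a\, e^{-2j_0 h}[\,\cdots\,(\nabla^2 h) + \cdots(\nabla h\nabla h)\,]g^{-1}\bigr)$ so that the gradient can be read off by Definition \ref{defn:scalcur-Gdiff-varation}. This is where the operator-calculus bookkeeping of \cite{leschdivideddifference} enters: I would repeatedly use integration by parts against $\varphi_0$ (to move $\nabla$ off $a$ and create $\nabla^2 h$ or second $\nabla h$ factors), the chain rule \eqref{eq:introv2-f1} for differentiating the conformal exponentials, and the rule for swapping $\nabla$ past $\logmodop$; cyclicity of $\varphi_0$ (equivalently $\varphi_0\circ\modop = \varphi_0$) is used to rotate $a$ into the front slot. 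The term coming from varying $e^{(2-m)h}$ inside the $T(\logmodop)$-sandwich, after integration by parts, yields the $L$-type contribution $L(s,t) = -2T(s)j_0 f_1(-j_0(s+t))$; the term from varying the operator $T(\logmodop)$ itself produces the genuinely new divided-difference pieces, which is exactly the source of $M$, $P$, $Q$ and of the auxiliary function $\tilde T(s) = T(-s)e^{-s}$ arising from a reflection/conjugation needed to bring one of the $\nabla h$ factors to the correct side. Tracking the signs and which of the two $\nabla h$ slots each term lands in is what separates $P$ from $Q$ and accounts for the $\pm$ pattern in $L+M-P-Q$.

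The main obstacle, as usual in these modular-curvature variations, will be the second of the three contributions --- the variation of $T(\logmodop)$ --- because it is not a single divided difference but a composition: $\logmodop$ appears inside $T$, and after we perturb we must expand $T(\logmodop - 2\varepsilon\op{ad}_a)$ to first order, then commute the resulting $\op{ad}_a$ through the surrounding $\nabla h$ factors and the $e^{(2-m)h}$ weight, generating a cascade of rearrangement (``moving $e^h$ past $\nabla e^h$'', in the language of the introduction) that must be reassembled into a closed two-variable function. This is precisely the step that \cite{MR3369894} carried out in dimension four, and where the ``operator calculus'' identities of \cite{leschdivideddifference} are indispensable; I expect the computation to be dimension-independent in form, with $m$ entering only through $j_0 = (m-2)/2$. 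Finally, with both expressions for $\grad_h F_{\mathrm{EH}}$ in hand --- the one from Corollary \ref{cor:scalcur-KEHandHEH} via the heat-kernel variation \eqref{eq:scalcur-varofheatop}, and the one just computed --- uniqueness of the gradient (Definition \ref{defn:scalcur-Gdiff-varation}) forces $\tilde K_{\op{EH}} = K_{\op{EH}}$ and $\tilde H_{\op{EH}} = H_{\op{EH}}$, which is the assertion of the proposition and, combined with the explicit formula for $T$ from Proposition \ref{prop:scalcur-EHlocal}, gives the internal relations \eqref{eq:scalcur-relone}--\eqref{eq:scalcur-reltwo} of Theorem \ref{thm:funrel-scalcur}.
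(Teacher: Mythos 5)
Your plan is correct and follows essentially the same route as the paper: the five-part Leibniz decomposition of $\tfrac{d}{d\varepsilon}\big|_{0}F_{\mathrm{EH}}(h+\varepsilon a)$ starting from the simplified form \eqref{eq:scalcur-EHlocalsimplfied}, followed by integration by parts against $\varphi_0$, the chain rule for conformal exponentials, the divided-difference rule for commuting $\nabla$ past $T(\logmodop)$, and cyclicity of $\varphi_0$, exactly as carried out in Lemmas \ref{lem:scalcur-localvar-1}--\ref{lem:scalcur-localvar-4}. One small attribution slip: the $\varepsilon$-variation of the operator $T(\logmodop_{h+\varepsilon a})$ yields only $M$; the functions $P$, $Q$ and the reflected function $\tilde T$ arise instead from the two $\nabla a$ terms after integration by parts (consistent with your own earlier listing of the three $\varepsilon$-dependent factors and your closing remark about the two $\nabla h$ slots).
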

 \begin{rem}
     One  discovers the internal relations discussed in Section \ref{sec:scalcur-intrelation} by comparing the local curvature functions in the proposition above and in Corollary \ref{cor:scalcur-KEHandHEH}.
 \end{rem}

 \begin{proof}
   The variation $\frac{d}{d\varepsilon}|_{\varepsilon = 0} F_{\mathrm{EH}}(h+\varepsilon a)$ is decomposed to five parts by the Leibniz rule:
\begin{align*}
    \frac{d}{d\varepsilon}|_{\varepsilon = 0} F_{\mathrm{EH}}(h+\varepsilon a)& =
    \varphi_0 \left( \frac{d}{d\varepsilon}\Big|_{\varepsilon=0} e^{(2-m)(h+\varepsilon a)}
        T(\logmodop)(\nabla h) \nabla h g^{-1}\right)
        \\
        &+  \varphi_0\brac{ e^{(2-m)h} \frac{d}{d\varepsilon}\Big|_{\varepsilon = 0}
       T(\logmodop_{h+\varepsilon a})(\nabla h) (\nabla h) g^{-1} 
        } \\
        &+ \varphi_0\brac{ e^{(2-m)h}
       T(\logmodop)(\nabla a) (\nabla h) g^{-1} 
        } \\
        &+ \varphi_0\brac{ e^{(2-m)h}
       T(\logmodop)(\nabla h) (\nabla a) g^{-1} 
        }\\
        &+ \varphi_0\brac{ \frac{d}{d\varepsilon}\Big|_{\varepsilon = 0} e^{(2-m)(h+\varepsilon a)}  c(m) \mathcal S_\Delta
    }.
\end{align*}
Individually, they can be rewritten in the form:
\begin{align*}
    \varphi_0 \brac{ W_1(\logmodop)(\nabla^2 h) +W_2(\logmodop,\logmodop)(\nabla h \nabla h)}  g^{-1}.
\end{align*}
The computation for the functions $W_1$ and $W_2$ above is distributed in several lemmas listed in  Table \ref{tab:scalcur-lemmas}. 
\begin{table}[!htbp]
    \centering
    \begin{tabular}{|l|l|} \hline
       $ \varphi_0 \left( \frac{d}{d\varepsilon}\Big|_{\varepsilon=0} e^{(2-m)(h+\varepsilon a)}
        T(\logmodop)(\nabla h) \nabla h g^{-1}\right)$ & Lemma \ref{lem:scalcur-localvar-1} \\
$\varphi_0\brac{ e^{(2-m)h} \frac{d}{d\varepsilon}\Big|_{\varepsilon = 0}
       T(\logmodop_{h+\varepsilon a})(\nabla h) (\nabla h) g^{-1} 
        }$ &  Lemma \ref{lem:scalcur-localvar-2}\\
$\varphi_0\brac{ e^{(2-m)h}
       T(\logmodop)(\nabla a) (\nabla h) g^{-1} 
        }$ & Lemma \ref{lem:scalcur-localvar-3}\\
        $\varphi_0\brac{ e^{(2-m)h}
       T(\logmodop)(\nabla h) (\nabla a) g^{-1} 
        }$ & Lemma \ref{lem:scalcur-localvar-4}\\
        $\varphi_0\brac{ \frac{d}{d\varepsilon}\Big|_{\varepsilon = 0} e^{(2-m)(h+\varepsilon a)}  c(m) \mathcal S_\Delta
    }$ & Special case in Lemma \ref{lem:scalcur-localvar-1} \\
        \hline
   \end{tabular}
\caption{ The simplification of  terms appeared in the left column can be found in the lemmas listed in the right column.}
    \label{tab:scalcur-lemmas}
\end{table}

 \end{proof}
 
  The lemmas stated below 
 (from Lemma  \ref{lem:scalcur-localvar-1} to Lemma \ref{lem:scalcur-localvar-4}) are almost identical to those in \cite{MR3369894}, at the end of Section 5.  
\begin{lem}
    \label{lem:scalcur-localvar-1}
    Let $a$ be a self-adjoint element in $C^\infty(M_\theta)$ and $\varepsilon>0$. The variation
    \begin{align*}
        \varphi_0\brac{
        \frac{d}{d\varepsilon}\Big|_{\varepsilon=0} e^{(2-m)(h+\varepsilon a)}
        T(\logmodop)(\psi) \psi
    } =
    \varphi_0\brac{
        a e^{(2-m)h} L(\logmodop,\logmodop)(\psi \psi)
    },
    \end{align*}
    where 
    \begin{align*}
        L(s,t) = T(s) (2-m) f_1((2-m)(s+t)/2) = 2  \frac{e^{(2-m)(s+t)/2}-1}{(s+t)}T(s).
    \end{align*}
    In particular,  when reduced to the commutative setting, we have:
    \begin{align*}
        \varphi_0\brac{ \frac{d}{d\varepsilon}\Big|_{\varepsilon=0} e^{(2-m)(h+\varepsilon a)} \mathcal S_\Delta}
        = (2-m) \varphi_0 \brac{
            e^{(2-m)h} \mathcal S_\Delta
        },  
    \end{align*}
    where $\mathcal S_\Delta$ is the scalar curvature function for the original metric which commutes with everything even after the deformation. 
\end{lem}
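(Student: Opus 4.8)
The plan is to reduce the computation to the standard operator-calculus manipulation underlying all of Section~\ref{sec:varofEH}: first expand $e^{(2-m)(h+\varepsilon a)}$ by Duhamel's formula, then push the resulting factor $a$ to the front using the trace property of $\varphi_0$, and finally absorb the surviving exponentials of $h$ into the function of the modular derivation, so that $L$ is read off from a single one-dimensional integral. Throughout write $c\defeq 2-m$ and recall $\logmodop=-2\op{ad}_h$, so that conjugation by $e^{th}$ acts as $e^{th}(\,\cdot\,)e^{-th}=e^{-t\logmodop/2}$ on anything $C^\infty(M_\theta)$ acts on from both sides.

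First I would apply Duhamel's formula,
\begin{align*}
\frac{d}{d\varepsilon}\Big|_{\varepsilon=0} e^{c(h+\varepsilon a)} = c\int_0^1 e^{sch}\, a\, e^{(1-s)ch}\, ds ,
\end{align*}
insert it into $\varphi_0\bigl(\,\cdot\,T(\logmodop)(\psi)\psi\bigr)$, move $a$ to the far left by the trace identity $\varphi_0(AB)=\varphi_0(BA)$, and substitute $u=1-s$; this gives
\begin{align*}
\varphi_0\brac{\frac{d}{d\varepsilon}\Big|_{\varepsilon=0} e^{c(h+\varepsilon a)}\, T(\logmodop)(\psi)\psi} = c\int_0^1 \varphi_0\brac{ a\, e^{uch}\,\bigl(T(\logmodop)(\psi)\psi\bigr)\, e^{(1-u)ch}}\, du .
\end{align*}

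The one genuinely non-formal step is to rewrite the conjugated monomial. Reading $\psi\psi$ as two slots and letting $\logmodop_{(1)},\logmodop_{(2)}$ denote the copies of $-2\op{ad}_h$ acting on the first and the second slot, one checks that $T(\logmodop)(\psi)\psi=T(\logmodop_{(1)})(\psi\psi)$, and that the left multiplication by $e^{uch}$, the right multiplication by $e^{(1-u)ch}$, and $T(\logmodop_{(1)})$ all commute. Pulling a single $e^{ch}$ to the far left and rewriting the leftover exponent through $\logmodop_{(1)}+\logmodop_{(2)}$ --- using that ``right $h$ minus left $h$'' acts as $(\logmodop_{(1)}+\logmodop_{(2)})/2$ --- yields
\begin{align*}
e^{uch}\,\bigl(T(\logmodop_{(1)})(\psi\psi)\bigr)\, e^{(1-u)ch} = e^{ch}\,\brac{ e^{(1-u)c(\logmodop_{(1)}+\logmodop_{(2)})/2}\, T(\logmodop_{(1)})}(\psi\psi) .
\end{align*}
Substituting this back, pulling the linear functional $\varphi_0(a\, e^{ch}\,\cdot\,)$ past the $u$-integral, and evaluating
\begin{align*}
c\int_0^1 e^{(1-u)c(s+t)/2}\, du \cdot T(s) = \frac{2\bigl(e^{c(s+t)/2}-1\bigr)}{s+t}\, T(s) = (2-m)\, f_1\!\brac{\frac{(2-m)(s+t)}{2}}\, T(s) ,
\end{align*}
produces exactly $\varphi_0\bigl(a\, e^{(2-m)h}\, L(\logmodop,\logmodop)(\psi\psi)\bigr)$ with $L$ as stated; the contraction with the central (torus-invariant) tensor $g^{-1}$ present in the applications is carried along inertly. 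For the special case one runs the same argument with $T(\logmodop)(\psi)\psi$ replaced by the torus-invariant, hence (even after deformation) central, function $\mathcal S_\Delta$: there are no $\psi$-slots, so $e^{uch}\mathcal S_\Delta\, e^{(1-u)ch}=e^{ch}\mathcal S_\Delta$, the rearrangement is vacuous, and the $u$-integral contributes only the bare constant $2-m$, giving the stated identity.

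The main obstacle is purely the bookkeeping in that middle step --- keeping the sign in $\logmodop=-2\op{ad}_h$ straight and correctly assigning the two modular slots when the exponentials of $h$ are pushed through $\psi\psi$ --- which is exactly the divided-difference calculus of \cite{leschdivideddifference}; everything else (Duhamel's formula, cyclicity of $\varphi_0$, the elementary integral) is routine, and this lemma is in fact the direct analogue of the corresponding one at the end of Section~5 of \cite{MR3369894}.
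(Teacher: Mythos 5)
Your argument is correct, and it is exactly the Duhamel-plus-cyclicity computation that the paper relies on (it gives no proof of this lemma itself, deferring to the end of Section~5 of \cite{MR3369894}, where the same steps — Duhamel expansion of $e^{(2-m)(h+\varepsilon a)}$, the trace property of $\varphi_0$ to cycle $a$ to the front, and absorption of the conjugations into $e^{(1-u)(2-m)(\logmodop_{(1)}+\logmodop_{(2)})/2}$ followed by the elementary $u$-integral — appear). The signs, the slot bookkeeping, and the resulting $L(s,t)=(2-m)f_1((2-m)(s+t)/2)\,T(s)$ all match \eqref{eq:L-scalcur}, and the central-$\mathcal S_\Delta$ special case is handled correctly.
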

\begin{lem}
    \label{lem:scalcur-lemma-nablaT}
    Keep the notations. For any $\psi \in \Gamma(\slashed S)$, set
    \begin{align*}
        \modop(\psi) = e^{-2h} \psi e^{2h}, \,\,\logmodop = -2\op{ad}_h, \,\,\,
        \sigma_s(\psi) =\modop^{-is}(\psi),\,\,\, \text{for $s\in \C$}. 
    \end{align*}
    Then 
    \begin{align}
        \label{eq:scalcur-lemma-nablaT-eq1}
        \nabla \brac{ \sigma_s(\psi)} - \sigma_s \brac{
        \nabla \psi
    } = is \int_0^1 \sigma_{us}\circ \op{ad}_{2\nabla h}\circ \sigma_{(1-u)s} du.
    \end{align}
    Moreover, the obstruction of switching the classical differential and the modular derivation can be seen as follows: 
    \begin{align*}
        \nabla\left(  T(\logmodop)(\psi) \right) =T(\logmodop)(\nabla \psi) 
        + L_1(\logmodop,\logmodop)\left(  (\nabla h) \psi\right)
        + L_2(\logmodop,\logmodop)\left( \psi  (\nabla h) \right),
    \end{align*}
    where 
    \begin{align}
        \label{eq:scalcur-L1andL2}
        L_1(s,t) = (-2)\frac{T(s+t) - T(t)}{s}, \,\,\,
        L_2(s,t) =  2\frac{T(s+t) - T(s)}{t}.
    \end{align}
\end{lem}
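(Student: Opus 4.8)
The plan is to reduce both statements to the single commutator identity $[\nabla,\logmodop]=-\op{ad}_{2\nabla h}$ and then propagate it through the functional calculus by a Duhamel (Volterra) expansion.

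\emph{Step 1 --- the basic commutator.} Since $\logmodop=-2\op{ad}_h$ and the Levi--Civita connection obeys the (deformed) Leibniz rule $\nabla(a\psi)=(\nabla a)\psi+a(\nabla\psi)$, $\nabla(\psi a)=(\nabla\psi)a+\psi(\nabla a)$ for $a\in C^\infty(M_\theta)$ and $\psi$ a spinor (it preserves isotypic components, see Section \ref{subsec:notationsforlater}), a direct computation gives
\[
\nabla(\logmodop\psi)-\logmodop(\nabla\psi)=-2\bigl((\nabla h)\psi-\psi(\nabla h)\bigr)=-\op{ad}_{2\nabla h}(\psi),
\]
that is $[\nabla,\logmodop]=-\op{ad}_{2\nabla h}$ as operators, where $\op{ad}_{2\nabla h}$ acts on the coefficient functions of the one--form $2\nabla h$ by left/right bimodule multiplication (as do $\modop$, $\logmodop$, $\sigma_s$).

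\emph{Step 2 --- the first identity.} As $\logmodop$ is the logarithm of $\modop$, we have $\sigma_s=\modop^{-is}=e^{-is\logmodop}$, a one--parameter group of bimodule automorphisms preserving smooth sections. Apply the Volterra formula to $B=-is\logmodop$: differentiating $t\mapsto e^{tB}\nabla e^{(1-t)B}=\sigma_{ts}\,\nabla\,\sigma_{(1-t)s}$ in $t$ and integrating over $[0,1]$ yields
\[
[\nabla,\sigma_s]=\nabla\sigma_s-\sigma_s\nabla=\int_0^1 e^{tB}[\nabla,B]e^{(1-t)B}\,dt=is\int_0^1\sigma_{ts}\,\op{ad}_{2\nabla h}\,\sigma_{(1-t)s}\,dt,
\]
since $[\nabla,B]=-is[\nabla,\logmodop]=is\op{ad}_{2\nabla h}$ by Step 1. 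Evaluating on $\psi$ (and renaming $t\to u$) this is exactly \eqref{eq:scalcur-lemma-nablaT-eq1}. Alternatively, one may write $\sigma_s(\psi)=e^{2ish}\psi e^{-2ish}$, differentiate the two exponentials via $\nabla e^{X}=\int_0^1 e^{uX}(\nabla X)e^{(1-u)X}\,du$, and reconcile the two resulting integrals by the change of variable $u\mapsto 1-u$ in one of them.

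\emph{Step 3 --- the rearrangement formula.} Represent the functional calculus by a Fourier transform, $T(\logmodop)=\int_{\R}\widehat T(\tau)\,\sigma_\tau\,d\tau$ normalised so that $T(x)=\int_{\R}\widehat T(\tau)e^{-i\tau x}\,d\tau$. Applying $\nabla$ and moving it inside the $\tau$--integral (legitimate because $\widehat T$ is rapidly decreasing and all operators act continuously on smooth sections),
\[
\nabla\bigl(T(\logmodop)\psi\bigr)=T(\logmodop)(\nabla\psi)+\int_{\R}\widehat T(\tau)\,[\nabla,\sigma_\tau]\psi\,d\tau .
\]
Substituting \eqref{eq:scalcur-lemma-nablaT-eq1} and using $\op{ad}_{2\nabla h}(\chi)=2(\nabla h)\chi-2\chi(\nabla h)$ together with the multiplicativity of each $\sigma_\bullet$, the correction term splits into a piece of the form $(\nabla h)\psi$, on which $\sigma_{u\tau}$ acts on the $\nabla h$--slot and $\sigma_\tau$ on the $\psi$--slot, and a piece of the form $\psi(\nabla h)$, with the slots exchanged. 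Carrying out $\int_0^1 e^{-iu\tau x}\,du=(e^{-i\tau x}-1)/(-i\tau x)$ and then the $\tau$--integral turns each exponential into a divided difference of $T$, producing the two--variable symbols $L_1(x,y)=-2\,(T(x+y)-T(y))/x$ acting on $(\nabla h)\psi$ and $L_2(x,y)=2\,(T(x+y)-T(x))/y$ acting on $\psi(\nabla h)$; this is the asserted identity with $L_1,L_2$ as in \eqref{eq:scalcur-L1andL2}.

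\emph{Main obstacle.} Both statements are formally one Duhamel computation; the work lies in the bookkeeping of the multivariable functional calculus --- keeping straight which copy of $\logmodop$ in $L_1(\logmodop,\logmodop)$ and $L_2(\logmodop,\logmodop)$ differentiates $\nabla h$ and which differentiates $\psi$, and checking that the $u$--integral reproduces precisely the divided differences of \eqref{eq:scalcur-L1andL2} with the correct signs. The analytic justifications (differentiating $e^{tB}\nabla e^{(1-t)B}$, interchanging $\nabla$ with $\int_{\R}\widehat T(\tau)\,d\tau$) are routine here: $h$ is bounded and smooth, so $\sigma_s$ preserves and acts continuously on smooth sections and all the integrals converge in the Fréchet topology. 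These manipulations are the same as those at the end of Section 5 of \cite{MR3369894}.
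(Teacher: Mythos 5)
Your proof is correct: the sign bookkeeping in Steps 1--3 checks out (in particular $[\nabla,\logmodop]=-\op{ad}_{2\nabla h}$, the Duhamel formula for $[\nabla,\sigma_s]$, and the $u$- and $\tau$-integrations reproducing the divided differences $L_1,L_2$ with the slot conventions of the lemma). The paper itself gives no proof, deferring to Proposition 3.15 of \cite{leschdivideddifference} and the end of Section 5 of \cite{MR3369894}; your Volterra-expansion-plus-Fourier-transform argument is exactly the standard mechanism behind those references, so this is essentially the intended approach, made self-contained.
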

\begin{rem}
This is also a straightforward consequence of Proposition 3.15 in  \cite{leschdivideddifference}.
\end{rem}

%-------------------------------------------------------
\begin{lem} \label{lem:scalcur-localvar-2}
%     Let $\psi$ be a tensor field and $a \in C^\infty(M_\theta)$ self-adjoint as before. Denote 
%     \begin{align*}
%         \logmodop_{h+\varepsilon a} = -2\op{ad}_{h+\varepsilon a}.
%     \end{align*}
%     We have the following variation formula:
    Keep the notations. For any $\psi \in \Gamma(\slashed S)$, we have 
    \begin{align*}
        \varphi_0\brac{e^{(2-m)h}  \frac{d}{d\varepsilon}\Big|_{\varepsilon=0}
        T(\logmodop_{h+\varepsilon a})(\psi) \psi)
    } = 
    \varphi_0\brac{
    a e^{(2-m)h} M(\logmodop,\logmodop)(\psi \psi)
    },
    \end{align*}
    where
    \begin{align*}
        M(s,t) = 2 e^{(2-m)(s+t)/2}
           %\left[ \frac{T(-t_2) - T(t_1)}{t_1 + t_2} + \frac{T(t_2) - T(-t_1)}{t_1+t_2} e^{-(2-m)t/2}\right]
        \left[ \frac{T(-t) - T(s)}{s+t} + \frac{T(t) - T(-s)}{s+t } e^{-(2-m)t/2}\right].
    \end{align*}
\end{lem}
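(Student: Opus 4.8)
The plan is to reduce the variation $\frac{d}{d\varepsilon}|_{\varepsilon=0} T(\logmodop_{h+\varepsilon a})$ to an explicit divided-difference operator acting on $\psi\psi$, then move the factor $a$ into position using the trace property of $\varphi_0$. First I would write $T(\logmodop_{h+\varepsilon a}) = T(\logmodop - 2\varepsilon\op{ad}_a)$ and compute its first-order expansion in $\varepsilon$. Since $T$ is a Schwartz function, I would use the Fourier representation $T(x)=\frac{1}{2\pi}\int \hat T(\xi) e^{ix\xi}\,d\xi$ together with the standard perturbation formula for exponentials of (non-commuting) operators, namely
\begin{align*}
    \frac{d}{d\varepsilon}\Big|_{\varepsilon=0} e^{i\xi(\logmodop - 2\varepsilon\op{ad}_a)}
    = -2i\xi \int_0^1 e^{iu\xi\logmodop}\circ\op{ad}_a\circ e^{i(1-u)\xi\logmodop}\,du.
\end{align*}
Feeding this back through the Fourier integral produces, after the standard divided-difference identity $\frac{1}{2\pi}\int \hat T(\xi)(-2i\xi)\,e^{iu\xi s}e^{i(1-u)\xi t}\,d\xi\,du = -2\,\frac{T(s)-T(t)}{s-t}$ (the same manipulation underlying Lemma \ref{lem:scalcur-lemma-nablaT} and Proposition 3.15 of \cite{leschdivideddifference}), a representation of the derivative as $(-2)\,T^{[1]}(\logmodop',\logmodop'')\circ\op{ad}_a$ where $T^{[1]}$ is the first divided difference and the primed/double-primed modular derivations act on the appropriate sides of $\op{ad}_a$.

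Next I would make this bookkeeping precise by the same ``leg'' notation used throughout: $\logmodop$ acting to the left of a slot, to the right of a slot, etc., so that $\op{ad}_a(\psi\psi) = a(\psi\psi) - (\psi\psi)a$ and each term of $\op{ad}_a$ carries a definite modular weight. Inserting $\psi\psi$ and expanding $\op{ad}_a$ into its two terms gives two contributions; in each, the conjugation eigenvalue attached to $a$ is read off, the factor $e^{(2-m)h}$ is commuted past using $\logmodop$-homogeneity, and then the trace property $\varphi_0(\modop(\cdot))=\varphi_0(\cdot)$ is invoked to cyclically move $a$ to the front. Tracking the two signs and the two exponential shifts $e^{(2-m)(s+t)/2}$ and $e^{-(2-m)t/2}$ from this cyclic move is exactly what assembles the bracketed expression
\begin{align*}
    \left[\frac{T(-t)-T(s)}{s+t} + \frac{T(t)-T(-s)}{s+t}\,e^{-(2-m)t/2}\right],
\end{align*}
after relabelling the spectral variables $(s,t)$ to the standard convention in which both legs of $M$ act on $\psi\psi$ with $\logmodop_{(1)},\logmodop_{(2)}$.

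The main obstacle I anticipate is not the perturbation formula itself but the careful sign- and weight-accounting when the argument is pushed onto the two-sided module $\psi\psi$: one must correctly attribute, for each of the two terms of $\op{ad}_a$ and for the two divided-difference terms coming from $T^{[1]}$, which spectral variable is $s$ versus $-t$ (or $t$ versus $-s$) and which exponential factor the cyclic permutation under $\varphi_0$ contributes. This is precisely the place where the asymmetry between the two terms in the final bracket (only the second carries the extra $e^{-(2-m)t/2}$) arises, and it is the step most prone to error; everything else is a routine application of the functional calculus and the invariance of $\varphi_0$. As the remark before the lemma indicates, once the divided-difference machinery of \cite{leschdivideddifference} is in hand the computation is essentially forced, and the statement matches verbatim the dimension-four computation in \cite{MR3369894}, Theorem 5.1, which can be used as a consistency check.
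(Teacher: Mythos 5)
Your overall strategy---Duhamel expansion of $T(\logmodop-2\varepsilon\op{ad}_a)$, conversion to a first divided difference of $T$, and cyclic rearrangement under $\varphi_0$ to pull $a$ to the front---is exactly the route the paper intends: the paper gives no proof of this lemma, stating only that Lemmas \ref{lem:scalcur-localvar-1}--\ref{lem:scalcur-localvar-4} are ``almost identical to those in \cite{MR3369894}, at the end of Section 5,'' and that reference (together with Proposition 3.15 of \cite{leschdivideddifference}) carries out precisely this divided-difference computation in the case $j_0=1$. Your Duhamel formula and the identity producing $-2\,\frac{T(s)-T(t)}{s-t}$ are both correct.

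One step as written would, however, lead you to compute the wrong quantity. You propose to expand $\op{ad}_a(\psi\psi)=a(\psi\psi)-(\psi\psi)a$, i.e.\ to let the perturbation act on the product. But in the lemma the $\varepsilon$-derivative hits $T(\logmodop_{h+\varepsilon a})(\psi)$ alone; the second factor $\psi$ is a spectator until the trace property is invoked. The commutator $\op{ad}_a(\cdot)=a(\cdot)-(\cdot)a$ must be inserted \emph{inside} the functional calculus applied to the first $\psi$, so the derivative is $-2\,T^{[1]}$ with one modular leg acting before $\op{ad}_a$ and one after, evaluated on the single $\psi$; only then do you multiply by the second $\psi$ and apply $\varphi_0(e^{(2-m)h}\,\cdot\,)$. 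Varying $T$ applied to the product $\psi\psi$ is a genuinely different expression and will not reproduce the stated $M(s,t)$. Once this is corrected, the two contributions correspond to $a$ landing to the left of the first $\psi$ versus between the two $\psi$'s; the asymmetric factor $e^{-(2-m)t/2}$ on the second bracketed term is exactly the extra modular weight picked up when, in the latter case, $a$ must additionally be commuted past the first $\psi$ to reach the front, and the arguments $-s,-t$ arise from converting left actions to right actions under the cyclic move. With that repair, the remainder of your plan is sound and matches the cited computation.
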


\begin{lem} \label{lem:scalcur-localvar-3}
    Keep the notations.
    \begin{align*}
        &\,\, \varphi_0\brac{e^{(2-m)h} T(\logmodop)(\nabla h) (\nabla a )g^{-1} }\\
        =&\,\, 
        - \varphi_0\brac{a e^{(2-m)h} T(\logmodop)(\nabla^2 h)  } - 
        \varphi_0\brac{a e^{(2-m)h}
    P(\logmodop,\logmodop) (\nabla h \nabla h) g^{-1}
    }
    \end{align*}
    where
    \begin{align*}
        P(s,t) =(2-m) f_1 \brac{ \frac{(2-m)s}{2}} T(t) + L_1(s,t) + L_2(s,t),
    \end{align*}
    with $L_1$ and $L_2$ given in \eqref{eq:scalcur-L1andL2}. 
    \end{lem}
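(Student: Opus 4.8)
The plan is to prove the identity as an integration-by-parts formula, following the pattern of the analogous computation at the end of Section~5 of \cite{MR3369894}. Write $c=2-m$ throughout. The starting observation is that $\varphi_0$ annihilates covariant divergences: on each isotypical component it reduces to integration of the fiberwise trace against $d\mathrm{vol}_g$ over the closed manifold $M$, so $\varphi_0(\nabla\cdot W)=0$ for every (operator-valued) vector field $W$ on $M_\theta$ whose isotypical components are smooth. I would apply this to the vector field $W$ obtained by raising, with $g^{-1}$, the index of the one-form $e^{ch}\,T(\logmodop)(\nabla h)\,a$. Expanding $\nabla\cdot W$ by the Leibniz rule, using metric compatibility $\nabla g^{-1}=0$ (the Levi--Civita connection is torus-equivariant and survives the deformation) and the centrality of $g^{-1}$, one obtains
\[
0=\varphi_0\!\big((\nabla e^{ch})\,T(\logmodop)(\nabla h)\,a\,g^{-1}\big)+\varphi_0\!\big(e^{ch}\,\nabla\!\big(T(\logmodop)(\nabla h)\big)\,a\,g^{-1}\big)+\varphi_0\!\big(e^{ch}\,T(\logmodop)(\nabla h)\,(\nabla a)\,g^{-1}\big),
\]
whose last summand is precisely the left-hand side of the lemma.

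Next I would rewrite the first two summands. For the first, the chain rule \eqref{eq:scalcur-nablak} applied to $e^{ch}$ (whose associated modular derivation is $c\logmodop$) gives $\nabla e^{ch}=c\,e^{ch}f_1(c\logmodop/2)(\nabla h)$. For the second, Lemma~\ref{lem:scalcur-lemma-nablaT} with $\psi=\nabla h$ gives $\nabla(T(\logmodop)(\nabla h))=T(\logmodop)(\nabla^2 h)+(L_1+L_2)(\logmodop,\logmodop)(\nabla h\,\nabla h)$, the $L_1$- and $L_2$-terms combining because they act on the same input. After substituting, I would move $a$ to the front of each trace using the trace property of $\varphi_0$ (a purely cyclic rearrangement, introducing no modular twist since the remaining factors are already fixed tensors). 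This immediately produces the term $-\varphi_0(a\,e^{ch}\,T(\logmodop)(\nabla^2 h))$ and leaves two contributions of the shape $\varphi_0\big(a\,e^{ch}\,[\alpha(\logmodop)(\nabla h)][\beta(\logmodop)(\nabla h)]\,g^{-1}\big)$ for appropriate one-variable functions $\alpha,\beta$.

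The last step is the assembly into two-variable modular functions: since $\logmodop_{(1)}$ and $\logmodop_{(2)}$ act only on the first and second tensor factors of $\nabla h\,\nabla h$ respectively, one has $[\alpha(\logmodop)(\nabla h)][\beta(\logmodop)(\nabla h)]=[\alpha(s)\beta(t)](\logmodop_{(1)},\logmodop_{(2)})(\nabla h\,\nabla h)$ (the easy direction of the divided-difference bookkeeping of \cite{leschdivideddifference}, Prop.~3.15). The $\nabla e^{ch}$-term then contributes $c\,f_1(cs/2)T(t)$, with $s$ in the slot of the conformal factor's gradient and $t$ in the slot of the $\nabla h$ sitting inside $T$, while the $(L_1+L_2)$-term contributes $L_1(s,t)+L_2(s,t)$ verbatim; summing gives exactly $P(s,t)=c\,f_1(cs/2)\,T(t)+L_1(s,t)+L_2(s,t)$. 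I expect the genuine difficulty to be purely bookkeeping: keeping the single minus sign from the integration by parts attached to each resulting term, tracking the position of the factor $e^{(2-m)h}$ under the trace, and — most delicate — getting the $s$/$t$ slot assignments right when collapsing products of one-variable functions of $\logmodop$ into the two-variable function $P$. Everything else is routine given the chain rule and Lemma~\ref{lem:scalcur-lemma-nablaT}.
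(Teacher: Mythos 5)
Your argument is correct and is essentially the proof the paper intends: the paper defers these lemmas to the end of Section 5 of \cite{MR3369894}, where exactly this integration-by-parts scheme (vanishing of $\varphi_0$ on divergences, the chain rule $\nabla e^{(2-m)h}=(2-m)e^{(2-m)h}f_1((2-m)\logmodop/2)(\nabla h)$, Lemma \ref{lem:scalcur-lemma-nablaT} to commute $\nabla$ past $T(\logmodop)$, the trace property to cycle $a$ to the front, and the $\logmodop_{(1)},\logmodop_{(2)}$ bookkeeping) is carried out. Your slot assignments and signs reproduce $P(s,t)=(2-m)f_1((2-m)s/2)T(t)+L_1(s,t)+L_2(s,t)$ exactly as stated.
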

%     \begin{proof}
%         \begin{align*}
%          &\,\, \varphi_0\brac{e^{(2-m)h} T(\logmodop)(\nabla h)  (\nabla a )g^{-1}} \\
%         =&\,\, - \varphi_0\brac{
%             \nabla \left[ e^{(2-m)h} T(\logmodop)(\nabla h) \right] a g^{-1}
%         }\\
%         = &\,\, - \varphi_0\brac{ a e^{(2-m)h}
%             f_1\brac{ \frac{(2-m)\logmodop}{2}} \left( (2-m) \nabla h\right) T(\logmodop)(\nabla h) g^{-1}
%         }\\ 
%         &\,\,\,- \varphi_0 \brac{ a
%             e^{(2-m)h} (L_1+L_2)(\logmodop,\logmodop) (\nabla h \nabla h) g^{-1}
%         }.
%     \end{align*}
%     Therefore
%     \begin{align*}
%         L(s,t) =(2-m) f_1 \brac{ \frac{(2-m)s}{2}} T(t) + L_1(s,t) + L_2(s,t)
%     \end{align*}
%     \end{proof}
% 
    \begin{lem} \label{lem:scalcur-localvar-4}
        Keep the notations.
        \begin{align*}
            &\,\, \varphi_0\brac{e^{(2-m)h} T(\logmodop)(\nabla a) (\nabla h )g^{-1} }\\
        =&\,\, 
        - \varphi_0\brac{a e^{(2-m)h} \tilde T(\logmodop)(\nabla^2 h)  } - 
        \varphi_0\brac{a e^{(2-m)h}
    Q(\logmodop,\logmodop) (\nabla h \nabla h) g^{-1}
    },
                \end{align*}
                where
                \begin{align*}
                    \tilde T(s) &= T(-s) e^{(2-m)s/2}, \\
                    Q(s,t) &= (2-m) f_1 \brac{ \frac{(2-m)s}{2}} \tilde T(t)
                    + (-2)\frac{\tilde T(s+t) -\tilde T(t)}{s}\\
                    &+ 2\frac{\tilde T(s+t) - \tilde T(s)}{t}.
                \end{align*}
    \end{lem}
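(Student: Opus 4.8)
The plan is to prove Lemma~\ref{lem:scalcur-localvar-4} by reducing it to Lemma~\ref{lem:scalcur-localvar-3} via a single cyclicity manipulation, rather than running a fresh integration-by-parts argument. Unwinding the metric contraction in local coordinates, the left-hand side is $g^{ij}\varphi_0\left(e^{(2-m)h}\,[T(\logmodop)(\nabla_i a)]\,(\nabla_j h)\right)$, and I would use the integral representation $T(\logmodop)(\psi)=\int \hat T(u)\,e^{-2uh}\psi\,e^{2uh}\,du$ that comes from $\logmodop=-2\op{ad}_h$. Cyclically permuting $\nabla_i a$ to the far right under the trace $\varphi_0$, the block to its left becomes $e^{2uh}(\nabla_j h)e^{(2-m)h}e^{-2uh}$; since $e^{(2-m)h}$ is a function of $h$ it commutes with $e^{\pm 2uh}$, so this block equals $[e^{-u\logmodop}(\nabla_j h)]\,e^{(2-m)h}$, and integrating against $\hat T$ turns the $u$-integral into $T(-\logmodop)$.

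Next I would run the mirror-image rearrangement on the surviving factor $e^{(2-m)h}$: substituting $u\mapsto (2-m)/2-u$ in the Fourier variable and again sliding $e^{(2-m)h}$ through the modular automorphisms shows $[T(-\logmodop)(\nabla_j h)]\,e^{(2-m)h}=e^{(2-m)h}\,[\tilde T(\logmodop)(\nabla_j h)]$ with $\tilde T(s)=T(-s)e^{(2-m)s/2}$, exactly the function named in the statement. Using the symmetry of $g^{-1}$ to relabel indices, these two steps give
\begin{align*}
\varphi_0\left(e^{(2-m)h}\,T(\logmodop)(\nabla a)(\nabla h)\,g^{-1}\right)
=\varphi_0\left(e^{(2-m)h}\,\tilde T(\logmodop)(\nabla h)(\nabla a)\,g^{-1}\right).
\end{align*}

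With this identity in hand, I would simply invoke Lemma~\ref{lem:scalcur-localvar-3} with the Schwartz symbol $T$ replaced throughout by $\tilde T$; its proof (integrate by parts to move $\nabla$ off $a$, distribute the derivative over $e^{(2-m)h}$ via \eqref{eq:scalcur-nablak}, and push the remaining $\nabla$ through $\tilde T(\logmodop)$ via Lemma~\ref{lem:scalcur-lemma-nablaT}) is insensitive to which Schwartz function is used. Its conclusion then reads $-\varphi_0\left(a\,e^{(2-m)h}\,\tilde T(\logmodop)(\nabla^2 h)\right)$ together with the divided-difference term built from $\tilde T$; comparing the formula for $P$ in Lemma~\ref{lem:scalcur-localvar-3} with the definition of $Q$ in \eqref{eq:Q-scalcur} one checks that $Q$ is precisely $P$ with every occurrence of $T$ replaced by $\tilde T$, so the output is exactly the right-hand side of the lemma.

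The step demanding the most care is the exponential bookkeeping in the cyclic rearrangement: one must verify that sliding $e^{(2-m)h}$ through the $u$-parametrized modular automorphisms contributes precisely the factor $e^{(2-m)s/2}$, so that $\tilde T(s)=T(-s)e^{(2-m)s/2}$ emerges exactly and not some neighbouring exponential. This rests on the centrality of $e^{ch}$ in the subalgebra generated by $h$ together with convergence of the integrals, both automatic in the smooth setting. As an alternative to the reduction above (the route followed in \cite{MR3369894}), one can argue directly in full parallel with Lemma~\ref{lem:scalcur-localvar-3}: integrate by parts on $\nabla a$ (legitimate since $\varphi_0$ annihilates divergences), then use Lemma~\ref{lem:scalcur-lemma-nablaT} together with \eqref{eq:scalcur-nablak} to collect the Hessian term and the divided differences $(-2)\frac{\tilde T(s+t)-\tilde T(t)}{s}$ and $2\frac{\tilde T(s+t)-\tilde T(s)}{t}$ into $Q$.
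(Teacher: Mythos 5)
Your proposal is correct. Note that the paper itself does not write out a proof of this lemma: it only remarks that Lemmas \ref{lem:scalcur-localvar-1}--\ref{lem:scalcur-localvar-4} are ``almost identical'' to those at the end of Section 5 of \cite{MR3369894}, where the intended argument is the direct one you describe as your alternative --- integrate by parts to move $\nabla$ off $a$, use \eqref{eq:scalcur-nablak} for $\nabla e^{(2-m)h}$, and use Lemma \ref{lem:scalcur-lemma-nablaT} to commute $\nabla$ past $T(\logmodop)$, which is exactly how Lemma \ref{lem:scalcur-localvar-3} is obtained. Your primary route is genuinely different and, I think, cleaner: you first use the trace property of $\varphi_0$ to cyclically transport $\nabla a$ past $\nabla h$, verifying (correctly) that conjugating the modular automorphisms $\modop^{iu}$ through the weight $e^{(2-m)h}$ replaces $T(s)$ by $\tilde T(s)=T(-s)e^{(2-m)s/2}$, and then you invoke Lemma \ref{lem:scalcur-localvar-3} verbatim with $T$ replaced by $\tilde T$, observing that $Q$ in \eqref{eq:Q-scalcur} is precisely $P$ with $T\mapsto\tilde T$. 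This reduction avoids repeating the integration-by-parts computation and, as a bonus, explains conceptually where $\tilde T$ comes from: it is the adjoint of $T(\logmodop)$ with respect to the weight $\varphi_0\bigl(e^{(2-m)h}\,\cdot\,\bigr)$. The only hypothesis it uses beyond Lemma \ref{lem:scalcur-localvar-3} is the traciality of $\varphi_0$ on the deformed algebra, which the paper has already established and used in Section \ref{sec:varofEH}.
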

%     \begin{proof}
%     We rewrite
%         \begin{align*}
%           \varphi_0\brac{e^{(2-m)h} T(\logmodop)(\nabla a) (\nabla h )g^{-1} }=
%          \varphi_0\brac{
%          e^{(2-m)h}   e^{\frac{(2-m)\logmodop}{2}} T(-\logmodop)(\nabla h) (\nabla a) g^{-1}
%          }
% %          =&\,\, -\varphi_0 \brac{
% %              a \nabla \left[  T(\logmodop)(\nabla h) e^{(2-m)h} \right] 
% %          }\\
% %          =&\,\,- \varphi_0 \brac{
% %              a \nabla  \left[ e^{(2-m)h} \cdot e^{\frac{(2-m)\logmodop}{2}}T(\logmodop)(\nabla h) \right]
% %          }
%         \end{align*}
%         and then use the previous lemma.
%     \end{proof}

\section{Widom's Pseudo Differential Calculus}
\label{sec:widomcal}

We will give a quick review of Widom's pseudo differential calculus for vector bundles. Missing proofs can be found in \cite{MR560744,MR538027}.   At the end, we compute the symbol of $\slashed D^2$ in two different ways as an example to demonstrate the difference between the calculus on functions and on spinor sections, also to help the reader get use to the notations. To pass to the noncommutative setting, all the notations stay the same, only the
internal operations between functions and tensor fields are deformed. What we need for the later computations is only the first three terms
\eqref{eq:pcal-a0-a2terms-bundle} in the product formula of two symbols. 

\par

\subsection{Phase functions and local parallel-transport}
\label{subsec:phasefunandlocalpara}

Let $M$ be a smooth manifold with a connection $\nabla$, we define a  phase function $\ell(\xi_x,y) \in C^\infty(T^*M \times M)$, which plays the role of $\abrac{x-y,\xi}$ in classical Fourier analysis on $\R^n$, via the following property:
\begin{enumerate}
    \item For each $x \in M$, $\ell(\cdot,x)$ is a linear function on fibers of $T^*M$.
    \item For each $\xi_x \in T_x^*M$, 
        \begin{align}
            \partial^k_y \ell(\xi_x,y)\Big|_{y=x} = \begin{cases}
                \xi_x, & k=1, \\ 0, & k\neq 1,
            \end{cases}
            \label{eq:pcal-linearfun}
        \end{align}
        where $\partial^k_y \defeq \op{Sym} \circ \nabla^k$ is the symmetrized $k$-th covariant derivative with respect to $y$. 
\end{enumerate}
Such functions $\ell$ are not unique, can be constructed in local charts and pasted together by partition of unity, see \cite{MR560744}. Nevertheless, the covariant derivatives $ \nabla^k_y \ell(\xi_x,y)$ at $y=x$ are totally determined via the property  \eqref{eq:pcal-linearfun}. In practice, there is no harm to treat $\ell(\xi_x,y)$ as $\abrac{\xi_x,\exp_x^{-1} y}$ when performing local computation since only the covariant derivatives at $y=x$ are concerned. Such geometric interpretation was explained in \cite{MR2627820} and \cite{MR973171}. \par

Let $E\rightarrow M$ be a vector bundle equipped with a connection, still denoted by $\nabla$. As before, we can associate a smooth map 
\begin{align*}
    \tau: E \times M \rightarrow E
\end{align*}
to the connection with similar properties
\begin{enumerate}
\item For each $x \in M$, $\tau(\cdot,x):E\rightarrow E$ is a bundle mapping. %namely, linear on the fibers $$
\item When $s_x \in E_x$ is fixed with base point $x \in M$, 
\begin{align}
            \partial^k_y \tau(s_x,y)\Big|_{y=x} = \begin{cases}
                s_x, & k=0, \\ 0, & k\neq 0.
            \end{cases}
            \label{eq:pcal-bunconstsections}
        \end{align}
        Again,  $\partial^k_y$ is the  symmetrized  covariant derivative in $y$.
 \end{enumerate}
 Similar to the phase function $\ell$, the function $\tau$ is not unique and can be constructed locally and then pasted together, see \cite{MR560744}.  The covariant derivatives at $y=x$ are uniquely determined by the equation \eqref{eq:pcal-bunconstsections}. Similar to the phase function, the local interpretation of $\tau(s_x,y)$ (that is, when $x,y$ are closed enough) is nothing but parallel-transport of $s_x$ from $E_x$ to $E_y$ along the unique geodesic joining $x$ and $y$. \par

To get better understanding of the notations, we state the covariant Taylor expansion  for a smooth section $s(y) \in \Gamma(E)$ near by $x \in M$. Recall that $\ell(\xi,y)$ is linear in $\xi \in T^*M$, via the duality, $\ell(\cdot,y)$ can be treated as a vector field on $M$. With this interpretation, 
\begin{align*}
    \ell(\cdot,y)^k = \ell(\cdot,y) \otimes \cdots \otimes \ell(\cdot,y)
\end{align*}
is a symmetric $k$-contravariant tensor field and for a smooth function $f\in C^\infty(M)$, the contraction $\nabla^kf\cdot \ell(x,y)^k$ plays the role of $f^{(k)}(x)(y-x)^k$ in the Taylor's theorem in calculus. If $f(y)\in\Gamma(E)$ is a section of a vector bundle $E$, we need a notion of ``constant sections'': $\tau(\nabla^jf(x),y)$, to replace $f^{(k)}(x)$. We state the covariant Taylor expansion as below. 
\begin{prop}
Let $f(y)\in \Gamma(E)$ be a smooth section, near by a given point $x \in M$, and for a given integer $N$, the following difference
\begin{align*}
    f(y) - \sum_{j=0}^N \frac{1}{j!} \tau(\nabla^jf(x),y) \cdot \ell(x,y)^j
\end{align*}
vanishes to order $N+1$ at $y=x$. Notice that $\nabla^jf(x) \in E_x \otimes_j T_x^*M$, the local parallel-transport $\tau(\nabla^jf(x),y) \in E_y \otimes_j T_x^*M$ acts only on the $E$-bundle part of  $\nabla^jf(x)$, that is $\tau((\cdot)_x,y): E_x \rightarrow E_y$, and leave the $j$-covariant tensor untouched.   
\end{prop}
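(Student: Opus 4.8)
\emph{Proof proposal.} The statement that the displayed remainder vanishes to order $N+1$ at $y=x$ unwinds to: its symmetrized covariant derivatives $\partial^k_y=\mathrm{Sym}\circ\nabla^k_y$ vanish at $y=x$ for every $0\le k\le N$; equivalently, $f(y)$ and the partial sum have the same $N$-jet in $y$ at $x$. I would first note that, by \eqref{eq:pcal-linearfun} and \eqref{eq:pcal-bunconstsections}, all the covariant derivatives $\nabla^k_y\ell(\xi_x,y)|_{y=x}$ and $\nabla^k_y\tau(s_x,y)|_{y=x}$ are completely determined, so the $N$-jet of each term on the right is independent of the particular $\ell$ and $\tau$. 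One may therefore replace them by their geometric models — $\ell(\xi_x,y)=\langle\xi_x,\exp_x^{-1}y\rangle$ and $\tau(s_x,y)=$ parallel transport of $s_x$ from $x$ to $y$ along the short geodesic — and, passing to normal coordinates centered at $x$ with a radially parallel local frame of $E$, the identity reduces to the ordinary Taylor expansion of a vector-valued function. That is the shortest route.

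For a coordinate-free argument I would instead apply $\partial^k_y$ directly to the $j$-th summand $\tfrac{1}{j!}\,\tau(\nabla^jf(x),y)\cdot\ell(x,y)^j$, regarded as a product of the single $E$-valued factor $y\mapsto\tau(\nabla^jf(x),y)$ with $j$ copies of the $T_xM$-valued factor $y\mapsto\ell(x,y)$. The multinomial Leibniz rule for $\partial^k_y$ expresses this as a sum over decompositions $k=a_0+a_1+\dots+a_j$ of terms built from $\partial^{a_0}_y\tau$ and the $\partial^{a_i}_y\ell$, with coefficients $\binom{k}{a_0,a_1,\dots,a_j}$. Evaluating at $y=x$: by \eqref{eq:pcal-bunconstsections} only $a_0=0$ survives, and then $\tau$ acts as the identity on $E_x$; by \eqref{eq:pcal-linearfun} each $\ell$-factor survives only when its $a_i$ equals $1$. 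Hence the whole sum vanishes unless $k=j$, in which case the surviving multinomial coefficient is $j!$, cancelling the prefactor $\tfrac{1}{j!}$, and the $j$ contractions of $\nabla^jf(x)$ against $\ell(x,y)^{\otimes j}$ become trivial once each $\ell$ is differentiated exactly once, leaving precisely $\mathrm{Sym}(\nabla^jf(x))=\partial^j_yf(x)$. Summing over $0\le j\le N$ then gives $\partial^k_y$ of the partial sum at $y=x$ equal to $\partial^k_yf(x)$ for every $0\le k\le N$, so the remainder has vanishing $N$-jet.

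The step I expect to need the most care is the multinomial Leibniz rule for $\partial^k_y=\mathrm{Sym}\circ\nabla^k_y$ acting on a product of bundle-valued factors. Unlike the one-form case, the \emph{unsymmetrized} iterate $\nabla^a_y\tau(s_x,y)|_{y=x}$ does not vanish for $a\ge 2$, since the curvature of $E$ enters through the antisymmetric part, so one cannot blindly insert \eqref{eq:pcal-bunconstsections} into a Leibniz expansion. The point is that only fully symmetrized derivatives occur, and for these the naive Leibniz rule does hold: every correction coming from commuting two covariant derivatives is antisymmetric in the corresponding pair of differentiation slots and is therefore killed by the outer $\mathrm{Sym}$. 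I would prove this by a short induction on $k$ — moving one derivative across the product at the cost of curvature terms that get symmetrized away — or else simply invoke the normal-coordinate reduction of the first paragraph, in which the subtlety is invisible. Everything else is the routine bookkeeping of contractions and index symmetrizations, which I would omit.
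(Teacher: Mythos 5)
Your argument is correct, but note that the paper itself gives no proof of this proposition: Section \ref{sec:widomcal} opens by deferring all missing proofs to Widom's papers \cite{MR560744,MR538027}, so there is nothing internal to compare against line by line. Of your two routes, the first (normal coordinates plus a radially parallel frame, using that the $N$-jets of $\ell$ and $\tau$ at $y=x$ are pinned down by \eqref{eq:pcal-linearfun} and \eqref{eq:pcal-bunconstsections}) is exactly the reduction the paper gestures at informally when it says one may treat $\ell(\xi_x,y)$ as $\langle\xi_x,\exp_x^{-1}y\rangle$ and $\tau$ as parallel transport along the short geodesic; it is the quickest honest proof. Your second, coordinate-free route is a genuine alternative, and you have correctly isolated the one point where a careless argument fails: the unsymmetrized $\nabla^a_y\tau|_{y=x}$ does not vanish for $a\ge 2$ (indeed the paper later computes $2(\nabla^2)_{ij}\tau(s_x,x)=K_{ji}(s_x)$), so one cannot substitute \eqref{eq:pcal-bunconstsections} into a raw Leibniz expansion. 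Your resolution is sound, and is cleanest if phrased as follows: the unsymmetrized Leibniz rule $\nabla^k(uv)=\sum_{S\subseteq\{1,\dots,k\}}(\nabla^{|S|}u)_S(\nabla^{|S^c|}v)_{S^c}$ holds exactly with no curvature corrections (no derivatives are ever commuted), and the outer $\mathrm{Sym}$ over all $k$ slots both collects subsets of equal size into multinomial coefficients and lets you replace each factor's $\nabla^a$ by $\partial^a$, since full symmetrization absorbs any permutation within a factor's slots. After that, only $a_0=0$, $a_1=\cdots=a_j=1$, $k=j$ survives, the coefficient $j!$ cancels the $1/j!$, and each $\ell$-factor differentiated once contributes the identity tensor on $T^*_xM$, leaving $\partial^jf(x)$. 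The only remaining (standard) point, which you state but could make explicit, is that vanishing of $\partial^ks(x)$ for all $k\le N$ is equivalent to vanishing to order $N+1$, via the triangular relation between symmetrized covariant derivatives and coordinate jets.
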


%The function $\ell(\xi_{x_0},x)$ plays the role of $(x-x_0)\cdot \xi$ in equation \eqref{eq:rn-pcal-symmap}.

\subsection{Vertical and Horizontal Differential}
Let $\pi: T^*M \rightarrow M$ be the natural projection and $\mathcal T M$ be the bundle of tensor fields of all rank over $M$. Denote by $\mathcal B M \defeq \pi^* \mathcal TM$ the pull-back tensor bundle.  From analytic point of view, the main difference between the usual tensor fields and the pull-back tensor fields (sections of $\mathcal BM$) is that the base point of the later one depends on both $(x,\xi)$ in local coordinates. \par

\begin{defn}[Horizontal differential]
    Given a pull-back tensor fields $p(\xi_x) \in \Gamma(\mathcal BM)$ and an integer $j\ge 0$, the $j$-th vertical differential $D^j$ is  the usual differential along the fibers $T^*M$ which can be identified with $\R^n$. More precisely, $D^j p$ is a $j$-contravariant tensor, when contracting with $w_1\otimes \dots \otimes w_j$, where $w_1,\dots,w_j\in T^*_xM$,
    \begin{align*}
        & D^jp \cdot (w_1\otimes \dots \otimes w_j)\Big|_{\xi_x}\\
        \defeq&\,\,
        \frac{d}{ds_1}\Big|_{s_1=0}\dots \frac{d}{ds_j} \Big|_{s_j=0}
        p \brac{
        \xi_x+s_1w_1+\dots + s_jw_j
        }. 
    \end{align*}
\end{defn}

The generalization of the vertical differential in a coordinate free way is much less obvious. It involves both $(x,\xi)$ in local coordinates. 

\begin{defn}
    Given a covector $\xi_x \in T^*_xM$, we extend it to a pull-back tensor field near by $x$ via the phase function $d\ell(\xi_x,y)$ (since $d\ell(\xi_x,y)|_{y=x} = \xi_x$). Therefore for any pull-back tensor field $p \in \Gamma(\mathcal BM)$, $p(d\ell(\xi_x,y))$ is a tensor field in $y$ supported near by $x$. We define the $j$-th horizontal differential to be
    \begin{align*}
        \nabla^j_y p(d\ell(\xi_x,y))\Big|_{y=x}.
    \end{align*}
\end{defn}

The definition above extends quickly to the case in which $p \in \Gamma(\mathcal BM\otimes \pi^*\mathcal E)$ where $\mathcal E \rightarrow M$ is a vector bundle with a connection. What we need later is that $\mathcal E = \Hom(E,F)$, with a induced connection from connections on $E$ and $F$, where $E,F\rightarrow M$ are the domain and the range respectively of pseudo differential operators. 

\begin{eg}
    \label{eg:pcal-symofDelta}
 As an example, we show that for the squared length function $\abs\xi^2 \in C^\infty(T^*M)$ and the Levi-Civita connection, $\nabla \abs\xi^2 =0$. Indeed, set $p(\xi) = \abs\xi^2$, we write $p(d\ell(\xi_x,y))$ in terms of  tensor notation:
\begin{align*}
    p(d\ell(\xi_x,y)) =( d\ell(\xi_x,y)\otimes d\ell(\xi_x,y)) \cdot g^{-1},
\end{align*}
where $g^{-1}$ is the metric tensor on $T^*M$. According to the Leibniz property and the fact that $\nabla g^{-1} =0$,
\begin{align*}
    \nabla  p(d\ell(\xi_x,y)) 
    &= ( \nabla d\ell(\xi_x,y)\otimes d\ell(\xi_x,y)) \cdot g^{-1} 
    \\
    &+ ( d\ell(\xi_x,y)\otimes \nabla d\ell(\xi_x,y)) \cdot g^{-1}.
\end{align*}
Since $\nabla$ is torsion free, $\nabla d\ell(\xi_x,y) = \nabla^2 \ell(\xi_x,y) = \symd^2 \ell(\xi_x,y)$, which becomes zero when evaluating at $y=x$. It is worth  pointing out that $\nabla^2 \abs\xi^2\neq 0$, since $\nabla  p(d\ell(\xi_x,y))$ defined above only vanishes at $y=x$. 
   
\end{eg}

\subsection{Symbol and Quantization maps}
Let us consider $\Psi(E,F)$, pseudo differential operators from bundle $E$ to $F$. We fix a phase function $\ell$ and choose a cut-off function $\alpha(x,y) \in C^\infty(M\times M)$, which equal to $1$ in a small neighborhood of the diagonal with the property that the support is so closed to the diagonal that whenever $\alpha(x,y)\neq 0$, $d\ell(\xi_x,y)$ is nonzero for all $\xi_x \neq 0$. 

For a pseudo
differential operator $P: \Gamma(E) \rightarrow \Gamma(F)$, its symbol $\sigma_P$ belongs to  $$\Gamma(T^*M, \pi^* \Hom(E,F))$$ defined by the following formula: for $\xi_x \in T^*_xM$ and $s_x \in E_x$
\begin{align}
    \sigma_P(\xi_x)(s_x) \defeq P_y \brac{
        \alpha(x,y) \tau(s_x,y) e^{i\ell(\xi_x,y)}
},
    \label{eq:pcal-symmap}
\end{align}
the subscript $y$ indicates that $P$ acts on the $y$ variable. \par
For the quantization map $\op{Op}$ from symbols to pseudo differential operators, the construction is not unique. The explicit formula of $\op{op}$ will not be involved in the later computation, what we need is the property that $\op{Op}$ and $\sigma$ are inverse to each other modulo smoothing operators (symbols).\par

For instance, we can quantize the symbols $p(\xi_x):T^*M\rightarrow \End(E)$ in a geometric way: for any section $f \in \Gamma(E)$, near by a point $x\in M$, we lift it to a function on $T_xM$: $f_x: T_xM \rightarrow E$ with $f_x(Y) =\alpha(x,y) \tau(f(y),x)$, where
$y=\exp_xY$ and $\alpha(x,y)$ is the cut-off function defined before.  

\begin{align*}
    \op{Op}(p)f(x) = (2\pi)^{-\dim M} \int_{T_xM \times T_x^*M}
    e^{-i\abrac{\xi_x,Y}} p(\xi_x) f_x(Y) dYd\xi_x.
\end{align*}
Different choices of the cut-off function $\alpha(x,y)$ give rise to different quantization maps but they all agree with each other modulo smoothing operators. 

\subsection{Trace formula of symbols}

The symbol calculus make sense only for the complete symbols, namely everything is up to a smoothing operator. Therefore there must exit some errors when we try to compute the trace of the operator via its symbol.  To measure the errors, we introduce a parameter $t \in [1,\infty)$, for a pseudo differential operator $P$ with symbol $p(x,\xi)$, $P_t$ denotes the family of operators with symbol $p_t(x,\xi) \defeq p(x,t\xi)$. 
    \par  
Let $(M,g)$ be a $m$ dimensional closed Riemannian manifold. The canonical $2m$-form on the cotangent bundle $T^*M$ is given in local coordinates $(x,\xi)$ by
\begin{align}
\label{eq:wpse-diff-canonical-twom-form}
	\Omega = dx_1 \wedge \cdots \wedge dx_m \wedge d\xi_1 \wedge \cdots \wedge d\xi_m,
\end{align}
while $\Omega = dg d\xi_{x,g^{-1}}$ with the volume form $dg = (\det g) dx_1 \wedge \cdots dx_m$ and 
\begin{align}
\label{eq:wpse-diff-measure-on-T^*_xM}
	d\xi_{x,g^{-1}} = (\det g)^{-1}d\xi_1 \wedge \cdots \wedge d\xi_m
\end{align}
defines a measure on the fiber $T^*_xM$. 

\begin{prop}\label{prop:wpse-diff-sym-to-kernel-dia}
Keep the notations as above, let $P$ be pseudo differential operator acting on spinor sections $\Gamma(\slashed S)$ with order  less than $m$, the dimension of the manifolds such that the Schwartz kernel
function $k_P(x,y)$ exists, then the Schwartz kernel of the dilation  $P_t$   on the diagonal is given by 
    \begin{align}
        k_{P_t}(x,x) = \frac{t^m}{(2\pi)^m} \int_{T^*_xM}  \sigma(P) d\xi_{g^{-1}} + O(t^{-N}), \,\,\, \forall N \in\N.
        \label{eq:wpse-diff-sym-to-kernel-dia}
    \end{align}
 In particular, we obtain the trace formula for $P_t$:
 \begin{align}
     \Tr(P_t) = \frac{t^m}{(2\pi)^m} \int_{T^*M} \Tr_x \sigma(P) \Omega + O(t^{-N}), \,\,\, \forall N \in\N,
 \end{align}   
 where $\Tr_x$ is the fiberwise trace and   $\Omega = dg d\xi_{x,g^{-1}}$ is the canonical $2m$-form defined in \eqref{eq:wpse-diff-canonical-twom-form}. 
\end{prop}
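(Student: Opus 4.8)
The plan is to read the Schwartz kernel of a quantised symbol directly off the quantisation formula, restrict it to the diagonal, and then follow the dilation $p\mapsto p_t$ through the resulting fibrewise integral. Fix the phase function $\ell$, the cut-off $\alpha$ and the local parallel transport $\tau$ of Section \ref{subsec:phasefunandlocalpara}. For a symbol $q\in\Gamma(T^*M,\pi^*\End(\slashed S))$ whose order is negative enough that the Schwartz kernel of $\op{Op}(q)$ is continuous, performing the change of variables $Y\mapsto y=\exp_x Y$ in the definition of $\op{Op}$ writes that kernel as an oscillatory integral
\[
k_{\op{Op}(q)}(x,y)=(2\pi)^{-m}\int_{T_x^*M}e^{-i\ell(\xi_x,y)}\,q(\xi_x)\,\tau(\,\cdot\,,x)\,\alpha(x,y)\,J(x,y)\,d\xi_{x,g^{-1}},
\]
where $J(x,y)$ is the Jacobian relating $dY$ to the reference volume on $M$, with $J(x,x)=1$. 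On the diagonal $y=x$ everything collapses: $\ell(\xi_x,x)=0$ by \eqref{eq:pcal-linearfun}, $\alpha(x,x)=1$, $\tau(\,\cdot\,,x)$ is the identity on $\slashed S_x$ by \eqref{eq:pcal-bunconstsections}, and $J(x,x)=1$, so $k_{\op{Op}(q)}(x,x)=(2\pi)^{-m}\int_{T_x^*M}q(\xi_x)\,d\xi_{x,g^{-1}}$. Applying this to $q=\sigma(P_t)$, whose fibre behaviour is the symbol of $P$ rescaled in $\xi$, and rescaling the fibre variable accordingly contributes the Jacobian $t^m$ and produces the asserted main term $\frac{t^m}{(2\pi)^m}\int_{T_x^*M}\sigma(P)\,d\xi_{x,g^{-1}}$; the fibrewise trace $\Tr_x$ enters only because $q$ is $\End(\slashed S)$-valued.

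Next I would account for the error. The calculus reconstructs $P$ only modulo a smoothing operator, $P=\op{Op}(\sigma(P))+R$, and the polyhomogeneous expansion $\sigma(P)\sim\sum_{j\ge0}\sigma_{d-j}$ (with $d$ the order of $P$) is only asymptotic, so one works with a finite truncation $\op{Op}\big(\sum_{j=0}^{N}\sigma_{d-j}\big)$ plus an operator of very negative order. The mechanism that makes all the neglected terms $O(t^{-N})$ for every $N$ is that they are controlled by the off-diagonal part of the kernel, where after dilation the phase $e^{-it\ell(\xi_x,y)}$ oscillates rapidly in $\xi_x$: since the $\xi_x$-gradient of $\ell(\xi_x,y)$ equals $\exp_x^{-1}y$ up to higher order and is therefore nonvanishing on the support of $\alpha(x,\cdot)$ away from $x$, repeated integration by parts in $\xi_x$ (non-stationary phase) gains an arbitrary power of $t^{-1}$, uniformly in $x$. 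The same estimate absorbs the genuinely smoothing remainder $R$ and the curvature corrections stemming from $\ell$ not being literally $\abrac{\xi_x,\exp_x^{-1}(\cdot)}$ and from $J\not\equiv1$ away from the diagonal. I expect this uniform $O(t^{-N})$ bound to be the main obstacle: one has to organise the expansion so that precisely the $y=x$ contributions reproduce the fibre integral of $\sigma(P)$, while every remaining piece carries the oscillatory factor and is killed to infinite order in $t$.

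Finally, the trace formula follows by integrating the diagonal identity over $M$. Since $P$ acts on spinors, all the symbols above are valued in $\pi^*\End(\slashed S)$ and $\tau$ is the parallel transport of the spinor connection, so $\Tr(P_t)=\int_M\Tr_x k_{P_t}(x,x)$ against the volume form $dg$ occurring in the splitting $\Omega=dg\,d\xi_{x,g^{-1}}$ of the canonical $2m$-form \eqref{eq:wpse-diff-canonical-twom-form}. Feeding in the first step gives $\Tr(P_t)=\frac{t^m}{(2\pi)^m}\int_M\big(\int_{T_x^*M}\Tr_x\sigma(P)\,d\xi_{x,g^{-1}}\big)\,dg+O(t^{-N})$, and the splitting of $\Omega$ rewrites the iterated integral as $\int_{T^*M}\Tr_x\sigma(P)\,\Omega$; the remaining error is the $x$-integral of the error in \eqref{eq:wpse-diff-sym-to-kernel-dia}, which was uniform in $x$, hence again $O(t^{-N})$ for all $N$.
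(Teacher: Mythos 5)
The paper does not prove this proposition at all: immediately after the statement it refers the reader to \cite[Theorem 5.7]{MR538027}, so any honest argument you give is necessarily a different route. Your core computation is the right one and is essentially Widom's: for the explicit quantization $\op{Op}(q)$ of the paper, the phase $\ell(\xi_x,y)$, the cut-off $\alpha$, the transport $\tau$ and the Jacobian of $\exp_x$ all collapse to $0$, $1$, $\mathrm{id}$, $1$ on the diagonal, so $k_{\op{Op}(q)}(x,x)=(2\pi)^{-m}\int_{T^*_xM}q\,d\xi_{x,g^{-1}}$ \emph{exactly}, and the trace formula is then just Fubini against the splitting $\Omega=dg\,d\xi_{x,g^{-1}}$. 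Your off-diagonal non-stationary-phase estimate is also the correct mechanism for showing that the ambiguity in $\op{Op}$ (the choice of $\alpha$) and the curvature corrections contribute only $O(t^{-N})$: the difference of two cut-offs is supported in $|Y|\geq c>0$, where the fibrewise Fourier transform of the dilated symbol decays faster than any power of $t$.

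Two points need care. First, the power of $t$: with the paper's stated convention $p_t(\xi)=p(t\xi)$, the substitution $\eta=t\xi$ yields $t^{-m}$, not $t^{m}$, and the phase $\langle\eta,Y\rangle/t$ oscillates \emph{more slowly} as $t\to\infty$, so your integration-by-parts gain would go the wrong way. Both your Jacobian $t^{m}$ and your rapidly oscillating factor $e^{-it\ell}$ implicitly use $p_t(\xi)=p(\xi/t)$; that is the convention under which the proposition (and its use in the heat expansion, where the dilation parameter is $t^{-1/2}$) is correct, but you should say explicitly that you are correcting the definition rather than silently matching the claimed answer. Second, and more substantively: your assertion that the oscillatory estimate ``absorbs the genuinely smoothing remainder $R=P-\op{Op}(\sigma(P))$'' does not hold as stated. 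A fixed smoothing operator has a smooth kernel whose diagonal value is a $t$-independent function, which is certainly not $O(t^{-N})$; if the family $P_t$ were allowed to be \emph{any} operator with symbol $p_t$, the conclusion would simply be false. The proposition is well-posed only because $P_t$ is taken to be $\op{Op}(p_t)$ itself (up to the cut-off ambiguity, which your phase argument does control), or because the remainder is itself a dilated family with uniformly rapidly decreasing kernel --- establishing that uniformity is precisely the content of Widom's Theorem 5.7, and it is the one step your sketch names as the main obstacle but does not actually supply.
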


 See \cite[Theorem 5.7]{MR538027} for the proof.

\subsection{Product formula for symbols}
Let $\ell$ and $\tau$ be a phase function and a local parallel transport defined in section \ref{subsec:phasefunandlocalpara}. Recall that for $j\in \Z_+$, the tensor fields $\nabla_y^j \tau(s_x,y)$ and $\nabla_y^j \ell(\xi_x,y)$ when evaluated at $y=x$ is independent of the choice of $\tau$ and $\ell$. They are simply denoted as $\nabla^j \ell$ and $\nabla^j \tau$ in the rest of the paper. 

\begin{prop} \label{prop:pcal-symproduct}
    Let $P,Q: \Gamma(E) \rightarrow \Gamma(E)$ be two pseudo differential operators.  Then the symbol of the composition $PQ$ is given by 
    \begin{align}
        \sigma_{PQ} &= \sigma_{P} \star \sigma_{Q} 
       \backsim \sum \frac{i^{k -\bar p -\sum_{0}^{k}(m_i+p_i)}}{k! m_0! \cdots m_k! p_0! \cdots p_k! \bar p!} \nonumber\\
      & \times (D^{\bar p+ \sum_{0}^{k} p_i}\sigma_P) ( \nabla^{\bar p} D^{\sum_{0}^{k} m_i}\sigma_Q)
       \nabla^{p_0+m_0} \tau \nabla^{p_1+m_1} \ell \cdots \nabla^{p_k+m_k}\ell, 
        \label{eq:pcal-symproduct}
    \end{align}
    where the sum runs over all
    \begin{align*}
    k\ge 0, \,\,\, m_1,\dots, m_k \ge 2,\,\,\, \bar p, p_0, \dots, p_k \ge 0. 
    \end{align*}
\end{prop}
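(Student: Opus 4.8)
The plan is to follow Widom's original argument \cite{MR560744}, adapted to the vector-bundle setting set up in the preceding subsections. First I would write the composition $PQ$ through the symbol/quantization maps: by definition \eqref{eq:pcal-symmap}, $\sigma_{PQ}(\xi_x)(s_x) = (PQ)_y(\alpha(x,y)\tau(s_x,y)e^{i\ell(\xi_x,y)})$, so I must understand how $P$ acts on a section of the form $z \mapsto \alpha(x,z)\tau(s_x,z)e^{i\ell(\xi_x,z)}$ after $Q$ has already been applied. The standard device is to insert the symbol of $Q$: writing $Q$ via an oscillatory integral against $e^{i\ell(\eta_z,w)}$ and then letting $P$ act in the $z$-variable, one is reduced to evaluating $P_z(\alpha(x,z)\tau(\cdot,z)e^{i\ell(\xi_x,z)}e^{i\ell(\eta_z,w)})$ and performing a stationary-phase / Taylor expansion in the $\eta$ and $w$ variables around the diagonal.

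The core computation is the covariant Taylor expansion. Using the Taylor proposition stated just before (the covariant Taylor expansion of a section with remainder vanishing to order $N+1$), one expands $\sigma_Q(\eta_z)$, the parallel transport $\tau$, and the phase $e^{i\ell(\xi_x,z)}$ about $z=x$ in terms of the covariant derivatives $\nabla^j_z\ell(\xi_x,z)|_{z=x}$ and $\nabla^j_z\tau(s_x,z)|_{z=x}$; crucially, by \eqref{eq:pcal-linearfun} and \eqref{eq:pcal-bunconstsections} these are the universal tensors denoted $\nabla^j\ell$ and $\nabla^j\tau$, independent of the chosen $\ell,\tau$. Each derivative falling on the phase factor $e^{i\ell}$ produces an extra power of $i$ together with a factor $\nabla^{p_i+m_i}\ell$ (and one $\nabla^{p_0+m_0}\tau$ for the transport term); the vertical differentials $D^{\bar p + \sum p_i}\sigma_P$ and $D^{\sum m_i}\sigma_Q$ arise from differentiating the symbols in the fiber directions $\eta$ and $\xi$ when the stationary phase in $(\eta,w)$ is carried out; the horizontal differential $\nabla^{\bar p}$ on $\sigma_Q$ records the covariant dependence of $\sigma_Q$ on the base point. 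Collecting the multinomial coefficients from the several simultaneous Taylor expansions yields exactly the $1/(k!\,m_0!\cdots m_k!\,p_0!\cdots p_k!\,\bar p!)$ factor, and tallying the powers of $i$ (one per derivative on a phase, minus the vertical differentials from the stationary phase) gives the stated exponent $k - \bar p - \sum_0^k(m_i+p_i)$. The constraint $m_i\ge 2$ for $i\ge 1$ reflects that first covariant derivatives of $\ell$ at the diagonal either vanish or reproduce $\xi$ (already accounted for in the leading term), so only $\nabla^{\ge 2}\ell$ contribute genuinely new terms; $\bar p, p_i \ge 0$ and $k\ge 0$ are unrestricted.

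The main obstacle I expect is bookkeeping: organizing the multi-index combinatorics so that the four families of indices $(k; m_0,\dots,m_k; \bar p; p_0,\dots,p_k)$ package the iterated Taylor remainders correctly, and verifying that the asymptotic expansion is genuinely independent of all the auxiliary choices (the phase $\ell$, the transport $\tau$, the cut-off $\alpha$, and the quantization $\op{Op}$) modulo smoothing symbols. The independence follows because different choices differ by terms vanishing to high order on the diagonal, which contribute only to the smoothing part; still, making this precise requires care. Since a complete proof is carried out in \cite{MR560744,MR538027}, I would present the derivation at the level of the expansion scheme above and refer to those sources for the estimates on the remainder terms and the invariance statement, noting that the bundle-valued case introduces no new analytic difficulty once $\tau$ and the induced connection on $\pi^*\Hom(E,F)$ are in place.
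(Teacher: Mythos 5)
Your proposal matches the paper's treatment: the paper states this product formula as a recalled result from Widom's calculus and offers no proof of its own, explicitly deferring to \cite{MR560744,MR538027} (``Missing proofs can be found in...''), which is exactly where your argument also lands. The sketch you give of the covariant Taylor-expansion scheme --- the universal tensors $\nabla^j\ell$, $\nabla^j\tau$, the multinomial bookkeeping, and the reason only $\nabla^{\ge 2}\ell$ contribute --- is a reasonable outline consistent with those sources, so the only caveat is that neither you nor the paper supplies the remainder estimates, which both leave to Widom.
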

Some words about the notations.
\begin{rem} \hspace{1cm}
    \begin{enumerate}
        \item In \eqref{eq:pcal-symproduct}, the contraction of contravariant and covariant tensors produces a scalar function. At each point, $\sigma_P \circ \sigma_Q \circ \tau$ are compositions of endomorphisms on the bundle $E$. Therefore the summands in  \eqref{eq:pcal-symproduct} are $\End(E)$-valued functions on $T^*M$ as it should be. 
        \item The contravariant tensors $\nabla^{p_0+m_0} \tau$, $\nabla^{p_1+m_1} \ell$, \dots, are not symmetric, therefore the order of the summation $p_j+m_j$ matters and indicates that the $m$ indices act first. Since $D^j$ are symmetric tensors for any power $j$, we can replace $\nabla^{p_s+m_s} \ell$ by $\partial^{p_s}\partial^{m_s}\ell$, which stands for symmetrizing the $m_s$ indices and $p_s$ indices separately. 
         For example, take $\sigma_P = \sigma_Q=\sigma_{\slashed D}$ are both equal to the symbol of the Dirac operator, consider a term $D\sigma_{\slashed D} D \sigma_{\slashed D} \nabla^{2}\tau$, then the contraction is given by
            \begin{align*}
                \sum_{j,k}   (D \sigma_{\slashed D})_k (D \sigma_{\slashed D})_j (\nabla^2 \tau)_{jk},
            \end{align*}
            where the $j$ index acts first on $\tau$.
    \end{enumerate}
\end{rem}
\begin{prop}
    \label{prop:pcal-symprod-explicit-terms}
    After excluding the zero terms, the symbol product \eqref{eq:pcal-symproduct} is summed over 
    \begin{align}
    \label{eq:1-pcal-sumover}
    \begin{split}
& k\ge 0,\,\, \bar p\ge 0,\,\,\, p_0\ge 0, \,\,\, m_0\ge 0, \,\,\, p_1, \dots, p_k \ge 1, \,\,\,
    m_1, \dots, m_k \ge 2 \\
   & m_0+ p_0 \neq 1. 
    \end{split}
\end{align}
Furthermore, the symbol product can be grouped in the following way:
\begin{align*}
    \sigma_P \star \sigma_Q\backsim \sum_{j=0}^\infty a_j(\sigma_P,\sigma_Q),
\end{align*}
where each $a_j$ is a bi-differential operator that reduces the total degree by $j$:
\begin{align*}
    a_j(\cdot, \cdot): S\Sigma^d \times S\Sigma^{d'} \rightarrow S\Sigma^{d'+d - j}, \,\,\,\,
    \forall d,d' \in \Z.
\end{align*}
In fact, $a_j$ consists of terms in \eqref{eq:pcal-symproduct} whose power of the complex number $i$ is equal to $j$, namely, $a_j$ is obtained by summing over all the indices such that  
\begin{align*}
    j= \sum_0^k(m_l+p_l) + \bar p -k.
\end{align*}
\end{prop}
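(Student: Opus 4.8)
For the statement \eqref{eq:1-pcal-sumover} (Proposition \ref{prop:pcal-symprod-explicit-terms}), the plan is to establish the two assertions separately: that it is enough to sum \eqref{eq:pcal-symproduct} over the reduced range \eqref{eq:1-pcal-sumover}, and that collecting the resulting summands according to the power of $i$ coincides with collecting them according to the number $j$ by which the total symbol order is lowered.

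For the first assertion I would examine the transport factors $\nabla^{p_0+m_0}\tau$ and $\nabla^{p_s+m_s}\ell$, $s=1,\dots,k$, occurring in \eqref{eq:pcal-symproduct}, and show that a summand vanishes as soon as $p_s=0$ for some $s\ge1$, or $p_0+m_0=1$. By the convention noted in the remark after Proposition \ref{prop:pcal-symproduct}, the factor $\nabla^{p_s+m_s}\ell$ is to be read with the inner block of $m_s$ indices symmetrized first and the outer block of $p_s$ indices symmetrized afterwards; when $p_s=0$ it is therefore literally $\partial^{m_s}\ell$ evaluated at $y=x$, which is $0$ by \eqref{eq:pcal-linearfun} since $m_s\ge2\ne1$. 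In the same way a $\tau$-factor with $p_0+m_0=1$ collapses to $\partial^{1}\tau\big|_{y=x}=0$ by \eqref{eq:pcal-bunconstsections}. Hence every summand outside \eqref{eq:1-pcal-sumover} drops out and the value of the product is unchanged if we restrict to \eqref{eq:1-pcal-sumover}, retaining $k\ge0$, $\bar p\ge0$, $p_0\ge0$, $m_0\ge0$ and $m_1,\dots,m_k\ge2$ from the original range. The delicate point here — and the step I expect to be the genuine obstacle — is that one cannot infer vanishing merely from the total order $p_s+m_s$ being large: a tensor field that vanishes at a point typically has nonzero covariant derivatives there, so the vanishing really rests on the inner (symmetrized-first) block carrying the forbidden order ($\ge2$ for $\ell$, $=1$ for $\tau$) while the outer block is empty, and one must track the symmetrization prescription carefully to see this.

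For the second assertion I would count symbol orders. The vertical differential $D$ lowers the order by one, the horizontal differential $\nabla$ leaves it unchanged, each factor $\nabla^{p_s+m_s}\ell$ is linear in $\xi$ and hence of order $+1$, and $\nabla^{p_0+m_0}\tau$ is of order $0$; contractions of manifold indices do not affect the $\xi$-homogeneity. Therefore a summand built from $\sigma_P\in S\Sigma^{d}$ and $\sigma_Q\in S\Sigma^{d'}$ lies in $S\Sigma^{d+d'-j}$ with $j=\bar p+\sum_{0}^{k}(m_l+p_l)-k$, and this is exactly the combination through which the exponent $k-\bar p-\sum_{0}^{k}(m_i+p_i)$ of $i$ in \eqref{eq:pcal-symproduct} depends on the indices. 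Consequently grouping by order-drop and grouping by power of $i$ are the same, and we set $a_j$ to be the sum of those summands of \eqref{eq:pcal-symproduct} with $\bar p+\sum_{0}^{k}(m_l+p_l)-k=j$. Finally, on the reduced range \eqref{eq:1-pcal-sumover} one has $m_i+p_i-1\ge2$ for each $i\ge1$, so $j\ge\bar p+m_0+p_0+2k$; fixing $j$ therefore bounds $\bar p$, $m_0+p_0$ and $k$, and then only finitely many choices of the remaining $m_1,\dots,m_k,p_1,\dots,p_k$ are possible. Thus $a_j$ is a finite sum, hence a bi-differential operator, and $a_j\colon S\Sigma^{d}\times S\Sigma^{d'}\to S\Sigma^{d+d'-j}$ as asserted.
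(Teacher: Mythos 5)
Your argument is correct. Note, however, that the paper itself offers no proof of this proposition: Section \ref{sec:widomcal} is an expository review and explicitly defers all missing proofs to Widom's papers \cite{MR560744,MR538027}, so there is nothing internal to compare against. Your route is the natural one and it holds up: the vanishing of the terms with $p_s=0$ ($s\ge 1$) or $m_0+p_0=1$ does hinge, exactly as you emphasize, on the block-symmetrization convention (the $m_s$ indices contract into the symmetric tensor $D^{\sum m_i}\sigma_Q$ and the $p_s$ indices into $D^{\bar p+\sum p_i}\sigma_P$), so that the effective factors are $\partial^{m_s}\ell\big|_{y=x}=0$ for $m_s\ge 2$ and $\partial^{1}\tau\big|_{y=x}=0$, while nothing can be concluded from the total order alone; this is consistent with the paper's later explicit computation showing $\nabla^2\tau\big|_{y=x}$ is a nonzero antisymmetric tensor. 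Your degree count ($D$ drops the $\xi$-order by one, $\nabla$ preserves it, each $\ell$-factor is linear in $\xi$, $\tau$ is $\xi$-independent) correctly identifies the order drop with $j=\bar p+\sum_0^k(m_l+p_l)-k$, which is minus the exponent of $i$ in \eqref{eq:pcal-symproduct}, and the bound $j\ge \bar p+m_0+p_0+2k$ on the reduced range gives finiteness of each $a_j$. One harmless remark: the range \eqref{eq:1-pcal-sumover} is not the minimal one — e.g.\ $(m_0,p_0)=(2,0)$ also produces a vanishing term, since the antisymmetric $\nabla^2\tau$ is then contracted entirely into the symmetric $D^2\sigma_Q$ — but the proposition only asserts that restricting to \eqref{eq:1-pcal-sumover} loses nothing, so this does not affect your proof.
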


First few $a_j$ are:
\begin{align}
 \label{eq:pcal-a0-a2terms-bundle}
    \begin{split}
    a_0(p,q) &= pq, \\
a_1(p,q) &= -i (Dp) (\nabla q), \\
a_2(p,q) &=-\brac{
\frac12 (D^2p) (\nabla^2 q) + (Dp) (Dq) (\nabla^2 \tau)+ \frac12 (Dp) (D^2q) (\nabla^3 \ell )
}.
    \end{split}
\end{align}
\begin{rem}
    \hspace{1cm} 
    \begin{enumerate}
        \item The deformed symbol calculus is simply replace the internal operations by their $\theta$-version. 
        \item Let $\abs\xi^2 \in C^\infty(T^*M)$ be the squared length function. Since the connection is Levi-Civita, the tensor fields $\nabla^j \abs\xi^2$ with $j=0,1,2$,  $(\nabla^2 \tau)$  and $(\nabla^3 \ell )$ are all invariant under isometries, in particular, they are $\T^n$-invariant. They will be the central elements in  the later computation. 
    \end{enumerate}
     
\end{rem}

\subsection{Symbols of $\slashed D$ and $\slashed D^2$}
Let $\slashed S$ be the spinor bundle. For any $\xi_x\in T^*M$, $\mathbf c(\xi_x)$ stands for the Clifford multiplication. 
\begin{prop}
   The symbol of the Dirac operator $\slashed D$ is the tautological section  of the Clifford multiplication in $\Gamma(T^*M, \End(\slashed S))$ upto a factor $\sqrt{-1}$, namely,
   \begin{align}
       \sigma_{\slashed D}(\xi_x) = i \mathbf c(\xi_x).
       \label{eq:pcal-symofD}
   \end{align}
\end{prop}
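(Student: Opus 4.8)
The plan is to evaluate the defining formula \eqref{eq:pcal-symmap} for $P=\slashed D$ directly, exploiting that the phase function $\ell$ and the local parallel transport $\tau$ were designed precisely so that this produces a clean answer on the diagonal. Writing $\slashed D = \mathbf c \circ \nabla$ with $\nabla$ the spin connection, I would apply $\slashed D$ in the $y$-variable to $\alpha(x,y)\,\tau(s_x,y)\,e^{i\ell(\xi_x,y)}$ and restrict to $y=x$. Since the cut-off $\alpha$ is identically $1$ near the diagonal, it contributes nothing. The single covariant derivative, by the Leibniz rule, produces the two terms $\bigl(\nabla_y \tau(s_x,y)\bigr)e^{i\ell(\xi_x,y)}$ and $\tau(s_x,y)\otimes i\,e^{i\ell(\xi_x,y)}\,d_y\ell(\xi_x,y)$.

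Next I would invoke the normalizations at $y=x$. From \eqref{eq:pcal-linearfun} (the $k=0$ case) one gets $\ell(\xi_x,x)=0$, so $e^{i\ell(\xi_x,y)}|_{y=x}=1$; the $k=1$ case gives $d_y\ell(\xi_x,y)|_{y=x}=\xi_x$. From \eqref{eq:pcal-bunconstsections} one has $\tau(s_x,x)=s_x$ and $\nabla_y\tau(s_x,y)|_{y=x}=0$. Hence the first Leibniz term vanishes on the diagonal and the second contributes $s_x\otimes i\,\xi_x$. Applying the Clifford action then yields $\slashed D_y\bigl(\alpha\,\tau(s_x,y)\,e^{i\ell(\xi_x,y)}\bigr)\big|_{y=x} = i\,\mathbf c(\xi_x)(s_x)$, which is exactly \eqref{eq:pcal-symofD}.

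I do not expect any genuine obstacle here; the computation is essentially forced once the properties of $\ell$ and $\tau$ are in hand. The only point worth flagging is that the formula delivers the \emph{full} Widom symbol, not merely the principal part: because $\slashed D$ is first order, exactly one covariant derivative is applied, and that derivative either lands on $e^{i\ell}$ --- giving the asserted symbol --- or on $\tau$, which vanishes at $y=x$; there is simply no room for a lower-order correction term. The statement thus also serves as a sanity check on the conventions, to be contrasted with the analogous computation for $\slashed D^2$ carried out afterward.
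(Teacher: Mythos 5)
Your proposal is correct and follows essentially the same route as the paper's own proof: apply $\slashed D$ in the $y$-variable to $\tau(s_x,y)e^{i\ell(\xi_x,y)}$, use the Leibniz rule, and invoke the normalizations $\ell(\xi_x,x)=0$, $\nabla_y\ell|_{y=x}=\xi_x$, $\tau(s_x,x)=s_x$, $\nabla_y\tau|_{y=x}=0$. Your closing remark that first order leaves no room for lower-order corrections is a nice sanity check but adds nothing beyond what the computation already shows.
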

\begin{proof}
   Since $\slashed D$ is a differential operator, we can ignore the cut-off function $\alpha$ in \eqref{eq:pcal-symmap}. In a orthonormal coordinate system with a basis $\set{e_j}$, for any section $s \in \Gamma( \slashed S)$, the evaluation of $\sigma_{\slashed D}(\xi_x)(s)$ at $x \in M$ is given by:
    \begin{align*}
        \brac{ \sigma_{\slashed D}(\xi_x)(s)}(x) &= \slashed D\brac{
            e^{i\ell(\xi_x,y)} c(s_x, y)
        }\Big|_{y=x} 
        = \sum_j \mathbf(e_j) \nabla_j \brac{
        e^{i\ell(\xi_x,y)} c(s_x, y)
        }\\
        &= \brac{\sum_j i \nabla_j \ell (\xi_x,y) e^{i\ell(\xi_x,y)} \mathbf c(e_j)\tau(s_x,y)
        + e^{i\ell(\xi_x,y)} \nabla_j \tau(s_x,y)} \Big|_{y=x}
        \\
        &= i \mathbf c(\xi_x)(s|_x).
    \end{align*}
    %Then \eqref{eq:pcal-symofD} follows from 
    In the last step, we have used the fact that at $y=x$:
    \begin{align*}
        \nabla_j \ell (\xi_x,y) = (\xi_x)_j, \,\,\, \tau(s_x,y) = s_x , \,\,\,
        \nabla_j \tau(s_x,y) = 0.
    \end{align*}
%     At $y=x$, $\nabla_j \ell (\xi_x,y) = (\xi_x)_j$, $\tau(s_x,y) = s_x$ and $\nabla_j \tau(s_x,y) = 0$, thus we have proved \eqref{eq:pcal-symofD}.

\end{proof}

%     \begin{align*}
%         \sigma_{\slashed D}(\xi_x)(s) &= \slashed D\brac{
%             e^{i\ell(\xi_x,y)} c(s_x, y)
%         }\Big|_{y=x} \\
% & = \mathbf c(\nabla e^{i\ell(\xi_x,y)})(c(s_x,y)) |_{y=x} + e^{i\ell(\xi_x,y)} \slashed D(c(s_x,y))|_{y=x} \\
% &= i\mathbf c(\xi_x) (s_x). 
%     \end{align*}
We now calculate the symbol of $\slashed D^2$ in two ways. First, use the symbol calculus in \eqref{eq:pcal-a0-a2terms-bundle}. Since $\sigma_{\slashed D}(\xi)$ is linear in $\xi$, $D^j \sigma_{\slashed D} =0$ for all $j\ge 2$, in particular, $a_j(\sigma_{\slashed D},\sigma_{\slashed D}) =0$ for $j>2$. Therefore 
\begin{align*}
    \sigma_{\slashed D^2} = \sigma_{\slashed D} \star \sigma_{\slashed D} =
    \sum_0^2 a_j(\sigma_{\slashed D},\sigma_{\slashed D}).
\end{align*}
Start with $a_0$:
\begin{align*}
    a_0(\sigma_{\slashed D},\sigma_{\slashed D})(\xi_x) = \sigma_{\slashed D}(\xi_x)\sigma_{\slashed D}(\xi_x)
    = (i)^2 \mathbf c(\xi_x) \mathbf c(\xi_x) = \abs\xi^2 I,
\end{align*}
here we have used the Clifford relation $\mathbf c(\xi)^2 = - \abrac{\xi,\xi}$. 
%Recall the compatibility of the connection on $\mathcal S$ and the Levi-Civita connection, that is 
For a Clifford connection $\nabla$ on $\mathcal S$ (cf. \cite[Def. 3.39]{berline1992heat}), we alway have 
\begin{align*}
    \nabla_X ( \mathbf c (w)(s)) = \mathbf c(\nabla_X w)(s) + \mathbf c(w) (\nabla_X s),
\end{align*}
where $s\in \Gamma(\slashed S)$ is a spinor section, $X$ is a vector field and $w$ is differential form so that $\mathbf c (w)$ becomes a Clifford algebra section. In particular, $\nabla (\mathbf c (w)) =\mathbf c (\nabla w) $. We are ready to compute the first horizontal differential of $\sigma_{\slashed D}$:
\begin{align*}
    \nabla \sigma_{\slashed D}(\xi_x) = \nabla \brac{
    \sigma_{\slashed D}(d\ell(\xi_x,y)
}\Big|_{y=x} = \nabla \brac{
\mathbf c(d\ell(\xi_x,y))
}\Big|_{y=x} = \mathbf c \brac{
\nabla d\ell(\xi_x,y)
}\Big|_{y=x},
\end{align*}
when evaluating at $y=x$, $\nabla d\ell(\xi_x,x) = \nabla^2 \ell(\xi_x,x) = \symd^2 \ell(\xi_x,x) = 0$, here we have used the torsion free property of $\nabla$, so that $\nabla^2 \ell$ is symmetric. We have proved that $\nabla \sigma_{\slashed D} = 0$, thus 
$a_1(\sigma_{\slashed D},\sigma_{\slashed D}) =0$ as well. 

\par
In $a_2$, the only non-zero term is $-(D \sigma_{\slashed D}) (D\sigma_{\slashed D})(\nabla^2 \tau)$. We first compute $\nabla^2 \tau$ in local coordinates. From the definition of curvature tensor, we have when $y$ is in a small neighborhood of $x$:
\begin{align*}
    (\nabla^2)_{ij} \tau (s_x,y) - (\nabla^2)_{ji} \tau(s_x,y) = K_{ji}(\tau(s_x,y)),
\end{align*}
at $y=x$, by definition we get $\symd^2 \tau(s_x,x) =0$, that is $\nabla^2 \tau$ is anti-symmetric: $(\nabla^2)_{ij} \tau (s_x,x)=- (\nabla^2)_{ji} \tau (s_x,x)$. Therefore:
\begin{align*}
    2 (\nabla^2)_{ij} \tau(s_x,x) = K_{ji}(s_x).
\end{align*}
% 
% 
% 
% We first compute $\nabla^2 \tau$ in local coordinates. For any $s_x \in \slashed S_x$, at $y=x$, we have $\symd^2 \tau(s_x,x) =0$, we get $(\nabla^2)_{ij} \tau (s_x,x)=- (\nabla^2)_{ji} \tau (s_x,x)$. On the other hand,
% \begin{align*}
%     (\nabla^2)_{ij} \tau (s_x,x)&= (\nabla^2)_{ji} \tau(s_x,x) + K_{ji}(\tau(s_x,x)) \\
%     &=
%     - (\nabla^2)_{ij} \tau(s_x,x) + K_{ji}(\tau(s_x,x)),
% \end{align*}
% where $K_{ji} \in \End(\mathcal S)$ is the curvature tensor on $\mathcal S$. Thus
% \begin{align*}
%     (\nabla^2)_{ij}(\tau)(s_x,x) = \frac12 K_{ji}(\tau (s_x,x))
% \end{align*}
 Let $\set{e_j}$ be a orthonormal frame at $T^*_xM$, denote $\mathbf c^k = \mathbf c(e_k)$, 
\begin{align*}
    (D \sigma_{\slashed D}) (D\sigma_{\slashed D})(\nabla^2 \tau) &= 
    \sum_{j,k} (D \sigma_{\slashed D})_k (D\sigma_{\slashed D})_j(\nabla^2 \tau)_{jk
    } \\
    &=
   (i)^{2} \sum_{k,j}\frac12 \mathbf c^k \mathbf c^j K(e_k,e_j).
   %=- \frac14 S_\Delta I
\end{align*}
On the spinor bundle $\slashed S$, the Clifford contraction of the curvature two form $K$: $$\sum_{k,j}\frac12 \mathbf c^k \mathbf c^j K(e_k,e_j)$$ has only the Riemannian part which is equal to $\frac14 S_\Delta$, where $S_\Delta$ is the scalar curvature function. The calculation can be found, for instance, in \cite[Prop. 3.18]{roe1999elliptic}.  Thus 
\begin{align*}
    (D \sigma_{\slashed D}) (D\sigma_{\slashed D})(\nabla^2 \tau) = - \frac14 S_\Delta, 
\end{align*}
which is equal to $-a_2(\sigma_{\slashed D}, \sigma_{\slashed D})$.
\par
Sum up the calculation above, 
\begin{align*}
    \sigma_{\slashed D^2}(\xi_x)& = a_0(\sigma_{\slashed D}, \sigma_{\slashed D})
    + a_2(\sigma_{\slashed D}, \sigma_{\slashed D}) \\
   & = (\abs\xi^2 + \frac14 S_\Delta)I.  
\end{align*}
For the second method, we make use of the Lichnerowicz formula (cf. \cite[Thm 3.52]{berline1992heat}),
%Which agree with  the Weitzenb{\"o}ck formula (or Lichnerowicz):
\begin{align*}
    \slashed D^2 = \Delta + \frac14 S_\Delta,
\end{align*}
where $\Delta$ is the connection Laplacian. Similar computation as in example \eqref{eg:pcal-symofDelta} shows that the symbol of the connection Laplacian $\sigma_\Delta = \abs\xi^2$. Therefore we see again, that  the symbol of $\slashed D^2$ equals  
\begin{align*}
    \sigma_\Delta + \frac14 \sigma_{S_\Delta} = \abs\xi^2 + \frac14 S_\Delta. 
\end{align*}
% In fact, we compute the symbol of $\Delta$. Let $g^{-1}$ be the metric tensor on $T^*M$, 
% \begin{align*}
%   \sigma_\Delta(\xi_x)(s_x) &=- \nabla^2\brac{ 
%     e^{i\ell(\xi_x,y)} c(s_x,y) 
% } g^{-1} \Big|_{y=x}  \\ 
% &= \brac{(\nabla^2e^{i\ell(\xi_x,y)}) c(s_x,y) +  2(\nabla e^{i\ell(\xi_x,y)})(\nabla c(s_x,y)) 
% + e^{i\ell(\xi_x,y)} (\nabla^2 c(s_x,y)) }g^{-1} \Big|_{y=x}
% \end{align*}
% Since 
% \begin{align*}
%  &   \nabla c(s_x,y) = 0, \,\,\, (\nabla^2 c(s_x,y)) g^{-1}  = (\symd^2 c(s_x,y)) g^{-1} =0,
%     \,\,\,\\
%     & (\nabla^2 e^{i\ell(\xi_x,y)}) = e^{i\ell(\xi_x,y)} (id\ell)^2 g^{-1} + e^{i\ell(\xi_x,y)}
%     (\symd^2 \ell) g^{-1}, 
% \end{align*}
% evaluating at $y=x$ we see that $\sigma_\Delta(\xi_x) = \abs\xi^2$. 
% 
% 

\section{Symbolic Computation}
\label{sec:computation}

 In  the previous work \cite{Liu:2015aa},  the resolvent approximation and tensor computation were discussed in great detail.  Our main goal of this section is the unfinished job in \cite{Liu:2015aa}: to give a detailed exploration of 
 the dependence of local curvature functions on the dimension. 
 \par
% As explained in \cite{MR2427588}, from the spectral point of view of spin geometry $(C^\infty(M),\mathcal H, \slashed D)$, 
% the conformal change of metric is implemented by the conformal perturbation of the Dirac operator $\slashed D \mapsto \slashed D_k \defeq k \slashed D k$, where $k = e^h$ is the conformal factor with a real valued smooth function $h\in C^\infty(M)$. We now take this as the conformal change of metric for the deformed spectral triple  $(C^\infty(M_\Theta,\mathcal H, \slashed D))$. 
%
%Let $(C^\infty(M_\theta), L^2(\slashed S), \slashed D)$ be a spectral triple obtained from  Connes-Landi deformation. The spinor Dirac operator $\slashed D$ represents the underlying Riemannian metric $g$. A conformal change of metric has been explain in Section \ref{sec:scalcur-con-ch-met}, which is implemented by the perturbation $\slashed D \mapsto \slashed D_h \defeq k^{1/2} \slashed D k^{1/2}$,   where $k = e^h$ is the conformal factor whose logarithm  $h \in C^\infty(M_\theta)$ is a self-adjoint
%operator. We compute: 
%\begin{align*}
%    (\slashed D_h)^2 &= k^{1/2}  \slashed D k \slashed D k^{1/2} = 
%    k^{1/2}\brac{k \slashed D+ \mathbf c(\nabla k)} \slashed D k^{1/2} \\
%    &= k^{3/2} \slashed D^2 k^{1/2} + k^{1/2} \mathbf c(\nabla k) \slashed D k^{1/2}.
%\end{align*}
% Thus
% \begin{align*}
%    k^{1/2} (\slashed D_k)^2 k^{-1/2} = k^2 \slashed D^2 + k \mathbf c(\nabla k) \slashed D. 
% \end{align*}
%This leads us to consider the following operator 
We start with the Laplacian type operator in \eqref{eq:scacur-defn-Deltak}:
\begin{align*}
    \Delta_k = k^{1/2} (\slashed D_k)^2 k^{-1/2} =k^2 \slashed D^2 + k \mathbf c(\nabla k) \slashed D,\,\,\, k = e^h.
\end{align*}

Recall that the symbol of $\slashed D$ and $\slashed D^2$ are given by:
\begin{align*}
    \sigma(\slashed D)(\xi) = -i \mathbf c(\xi), \,\,\,\,\,
    \sigma(\slashed D^2)(\xi) =\brac{ \abs\xi^2 + \frac14 S_\Delta } \cdot I,\,\,\,\xi \in T^*M.
\end{align*}
Let $\lambda$ be the resolvent parameter, the symbol of $\Delta_k - \lambda$ is of the form $\sigma(\Delta_k - \lambda) = p_2(\xi,\lambda) + p_1(\xi) +p_0(\xi)$,  with 
\begin{align*}
    p_2(\xi,\lambda) &=  (k^2\abs\xi^2 - \lambda) I, \,\,\,\,\\
    p_1(\xi) &=  -i k(\nabla k)  (D\sigma_{\slashed D}) \sigma_{\slashed D},\\
    p_0(\xi) & =( \frac14 k^2 S_\Delta)I.
\end{align*}
Recall the standard resolvent approximation procedure (see for instance, Gilkey \cite{gilkey1995invariance}). We would like to construct a sequence of symbols $\set{b_j}_{j=0}^\infty$ in which $b_j$ is of degree $-j-2$ by solving the equation 
\begin{align*}
    (b_0+b_1 +b_2+\cdots) \star (p_2+p_1+p_0) \backsim 1
\end{align*}
inductively. The first approximation  $b_0 =p_2^{-1}(\xi,\lambda) =((k^2\abs\xi^2 - \lambda))^{-1}$ is the inverse of the leading symbol $p_2$ in the deformed  algebra of complete symbols\footnote{Complete symbols simply means modular smoothing symbols.} $S\Sigma(M_\theta)$. The rest of $b_j$ are given by a recurrence relation, in particular 
\begin{align}
    \label{eq:pcal-b1-sym}
  b_1= \brac{
    b_0p_1+ a_1(b_0,p_2)
    }(-b_0).  
\end{align}
and
\begin{align}
    \begin{split}
    b_2&=\left[\mathit{a}_0\left(b_0,p_0\right)+\mathit{a}_0\left(b_1,p_1\right)+\mathit{a}_1\left(b_0,p_1\right) \right. \\
    &+ \left. \mathit{a}_1\left(b_1,p_2\right)+\mathit{a}_2\left(b_0,p_2\right)\right] (-b_0).
    \end{split}
        \label{eq:pcal-b2-sym}
\end{align}
 We start with \eqref{eq:pcal-b1-sym}:
\begin{align}
    b_1 = -i b_0^2 k^2 (\nabla k^2) b_0 (D\abs\xi^2) \abs\xi^2  + 
    i b_0 k(\nabla k) b_0  (D\sigma_{\slashed D}) \sigma_{\slashed D},
    %i b_0 k \mathbf c(\nabla k) b_0 \mathbf c(\xi)).
\end{align}
since $\nabla k^2 = (\nabla k)k + k(\nabla k)$,
\begin{align}
    \begin{split}
      b_1=  &  i  k b_0 (\nabla k) b_0 (D\sigma_{\slashed D}) \sigma_{\slashed D}
        -i    k^3 b_0^2 (\nabla k) b_0 \left(D \xi ^2\right) \xi^2\\
   & -i   k^2 b_0^2 (\nabla k) k b_0 \xi ^2 (D\xi ^2).
    \end{split}
  \label{eq:pcal-b1}  
\end{align}

The scalar curvature  $R_{\Delta_k}$ in \eqref{eq:scalcur-R-Deltak} is obtained by the following integration against the next term $b_2(\xi,\lambda)$ (cf. \cite{Liu:2015aa}, Theorem 6.2):
\begin{align}
    \label{eq:modcur-RDlatak-int}
    R_{\Delta_k}(x) = \frac{1}{(2\pi)^m}\int_{T^*_xM} \frac{1}{2\pi i} \int_{C}  e^{-\lambda} b_2(\xi,\lambda) d\lambda dg^{-1}_x,
    \,\,
    m=\dim M,
\end{align}
where $C$ is a contour that defines the heat operator and $dg^{-1}_x$ is the measure associated to the Riemannian metric $g^{-1}|_x$ at $T^*_xM$, in a local chart $(x,\xi)$, $dg^{-1} = \det(g^{-1})d\xi_1 \cdots d\xi_m$. The trace formula in Proposition \ref{prop:wpse-diff-sym-to-kernel-dia} plays a crucial role in the proof of the result. 
\par
The computation for  $b_2$  can be carried out in similar way and was explained in \cite{Liu:2015aa}, Section 7 in great detail. In this paper, we leave the tedious calculation to \textsf{Mathematica}\footnote{The calculation can be handled completely by hands. Nevertheless the assistant of a computer algebra system (CAS) helps to eliminate potential algebraic mistakes and has great significance for deeper exploration in this direction.}. We collect the final results as screenshots and are put them  in appendix \ref{app:completeb2terrm}.  The list of complete terms of $b_2$ contains approximately $30$ terms (see the left column of the screenshots in appendix
\ref{app:completeb2terrm}), which is divided to several parts according to the summands  appeared in \eqref{eq:pcal-b2-sym}.

\subsection{The contour integral and  integration over the cosphere bundle}
On each fiber $T^*_xM$, we compute the integral using spherical coordinates:
\begin{align*}
    \int_{T^*_xM} b_2(\xi,\lambda) d\xi = \int_0^\infty b_2(r,\lambda) (r^{m-1})dr, \,\,\, m = \dim M,
\end{align*}
with
\begin{align}
    \label{eq:modcur-b2rlambda}
    b_2(r,\lambda) = \int_{\mathbb S^{m-1}} b_2(\xi,\lambda) d \sigma_{\mathbb S^{m-1}}, \,\,\, r = \abs\xi,
\end{align}
where $d \sigma_{\mathbb S^{m-1}}$ is the volume form of the unit sphere defined by the original Riemannian metric $g^{-1}$. The result of $b_2(r,\lambda)$, upto an overall constant  $\mathrm{vol}(\mathbb S^{m-1})$, $m=\dim M$, is listed in Table \eqref{tab:intsph}. For terms involving the spinor bundle, the computation can be found in  Appendix \ref{app:intsphere}. For the rest, see \cite{Liu:2015aa}, Appendix C.  
% We first integrate $b_2$ over the cosphere bundle with respect to the Riemannian metric. According to the computation in appendix ??,  upto a scalar $\mathrm{Vol}(S^{m-1})$, $m=\dim M$, the integration is given by the substitutions listed in the table ??.
\begin{table}
    \centering
    \begin{tabular}[!htpb]{|l|l|l|l|}
        \hline
      $b_2(\xi,\lambda)$   & $b_2(r,\lambda)$  & $b_2(\xi,\lambda)$ & $b_2(r,\lambda)$ \\ \hline
      $  D^2\abs\xi^2 \nabla^2(\mathit{c})$   & 0 &
      $D^2\abs\xi^2 \nabla^2\abs\xi^2$  & $(4/(3m)) S_\Delta \abs\xi^2$\\
      $(D \abs\xi)^2 \nabla\abs\xi^2$ & 0 &
      $(D\abs\xi) (D^2\abs\xi^2) (\nabla^3 \ell)$ & $(-8/(3m)) S_\Delta \abs\xi^2$ \\
      $(D\sigma_{\slashed D})^2$ & 1 & 
      $(D \abs\xi^2) (D\sigma_{\slashed D}) \sigma_{\slashed D}$ 
      & $(2/m ) \abs\xi^2$ \\
      $D^2\abs\xi^2$ & 2 &
      $((D \sigma_{\slashed D})\sigma_{\slashed D})^2$ & $(2-m)\abs\xi^2/m$ \\ 
      \hline
          \end{tabular}
    \caption{Integration over the unit cosphere bundle. The result $b_2(r,\lambda)$ appeared in the right columns (cf. Eq. \eqref{eq:modcur-b2rlambda}) is upto a scalar factor $\mathrm{vol}(\mathbb S^{m-1})$. Here $S_\Delta$ is the scalar curvature function and $m=\dim M$.}
    \label{tab:intsph}
\end{table}

Sum up, all the terms of $b_2(\xi,\lambda)$ are listed in the left columns  from Fig. \eqref{fig:a_2(b_0,p_2)_partI} to Fig. \eqref{fig:a_0(b_0,p_0)} while the middle columns consists of terms from $b_2(r ,\lambda)$. The  middle columns are obtained by applying the substitutions defined in Table \eqref{tab:intsph}.

% Recall that $b_2 \in S\Sigma^{-4}(M_\Theta) \cong S\Sigma^{-4}(M_\Theta)$, where $\cong$ means they are identical at the level of topological vector spaces.
% In particular, $b_2(\xi,\lambda)$ is given by a function on $T^*M \times \Lambda$ (where $\Lambda \in \C$ is a conic region for the resolvent parameter $\lambda$) with the homogeneity property

As a function on the cotangent bundle, $b_2(\xi,\lambda)$ has the following homogeneity property:
\begin{align}
    \label{eq:modcur-b2homoge}
    b_2(c\xi, c^2\lambda) = c^{-4} b_2(\xi,\lambda), \,\,\, \forall c>0.
\end{align}
Therefore integrating $b_2(\xi,\lambda)$ in $\xi$ makes sense only when $\dim M < 4$. As a consequence, after switching the order of integration in \eqref{eq:modcur-RDlatak-int} by an integration by parts argument, the contour integral in $\lambda$ can be achieved by evaluating at  $\lambda = -1$. We recall Lemma 7.2 in \cite{Liu:2015aa}. 
\begin{lem}
    \label{lem:modcur-put-lamda-to-1}
    Keep the notations. Let $m = \dim M$ and $j_0 = (m-2)/2$, which is the smallest integer so that the homogeneity $($defined in \eqref{eq:modcur-b2homoge}$)$  of $\frac{d^{j_0}}{d\lambda^{j_0}} b_2(r,\lambda)$ is strictly less than $-m$.  Then we have
\begin{align}
 \frac{1}{2\pi i}\int_C e^{-\lambda}  (\int_{0}^{\infty} b_2(r,\lambda)r^{m-1}dr) d\lambda  
 = \int_{0}^{\infty}  \frac{d^j_{0}}{d\lambda^j_{0}}\Big|_{\lambda =-1} b_2(r,\lambda) r^{m-1}dr.
    	\label{eq:modcur-put-lamda-to-1}
\end{align}
    
\end{lem}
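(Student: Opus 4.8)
\emph{Proof strategy.} This is the standard device that trades the resolvent contour integral for a finite jet, and the plan is to reduce both sides of \eqref{eq:modcur-put-lamda-to-1}, by the scaling \eqref{eq:modcur-b2homoge}, to one and the same quantity. First I would strip off the radial variable: by \eqref{eq:modcur-b2homoge} one may write $b_2(r,\lambda)=r^{-4}\beta(\lambda r^{-2})$ with $\beta(\nu)\defeq b_2(1,\nu)$, the cosphere average at radius one. The function $\beta$ is rational in $\nu$ --- the modular operator occurs only as a spectator parameter, namely the slot $s$ (resp.\ $s,t$) in the functions $\tilde K_{(p,q)}$, $\tilde H_{(p,q,l)}$ of \eqref{eq:scalcur-Kpq-Hpql} --- its poles lie only on $(0,\infty)$ (they are the zeros of the factors $b_0=(k^2r^2-\lambda)^{-1}$ at the various modular shifts), it is holomorphic at $\nu=0$, and $\beta(\nu)=O(|\nu|^{-2})$ as $\nu\to\infty$. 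I will show both sides equal $\tfrac12\,\beta^{(j_0-1)}(0)$, with $j_0=(m-2)/2$; note $j_0-1=\tilde j_0$, the order appearing in \eqref{eq:scalcur-KDeltak}.

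\emph{The left side.} For each fixed $r>0$ the integrand $e^{-\lambda}b_2(r,\lambda)$ is rational in $\lambda$ times $e^{-\lambda}$, so the $\lambda$-contour integral converges; after $\lambda=r^2\nu$ (which carries $C$ to a contour of the same kind around $(0,\infty)$) and the scaling it becomes $r^{-2}G(r^2)$ with $G(t)\defeq\frac{1}{2\pi i}\int_C e^{-t\nu}\beta(\nu)\,d\nu$. Since $\beta=O(\nu^{-2})$ we get $G(0^+)=0$, in fact $G(t)=O(t)$ as $t\to0^+$, while $G$ decays exponentially as $t\to\infty$ because the poles of $\beta$ have positive real part. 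Hence, in spherical coordinates and with $t=r^2$,
\begin{align*}
\text{LHS}=\int_0^\infty r^{-2}G(r^2)\,r^{m-1}\,dr=\tfrac12\int_0^\infty G(t)\,t^{\,j_0-1}\,dt,
\end{align*}
which converges \emph{exactly because} $j_0\ge1$ --- the one place $m>2$ is used. Then I would insert $\int_0^\infty e^{-t\nu}t^{\,j_0-1}\,dt=\Gamma(j_0)\,\nu^{-j_0}$ (valid for $\Re\nu>0$, attainable on $C$) and interchange the two integrations (Fubini, from the decay above), so that $\text{LHS}=\tfrac{\Gamma(j_0)}{2}\cdot\frac{1}{2\pi i}\int_C\nu^{-j_0}\beta(\nu)\,d\nu$. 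Because $j_0$ is a positive \emph{integer} --- the one place $m$ even is used --- $\nu^{-j_0}\beta(\nu)$ is rational, $O(\nu^{-j_0-2})$ at infinity, with poles only on $(0,\infty)$ and at $\nu=0$, so its residues sum to zero and the $C$-integral equals $\Res_{\nu=0}[\nu^{-j_0}\beta(\nu)]=\tfrac1{(j_0-1)!}\beta^{(j_0-1)}(0)$ (the sign being exactly the one for which $C$ is the contour of \eqref{eq:modcur-RDlatak-int}). Thus $\text{LHS}=\tfrac12\beta^{(j_0-1)}(0)$.

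\emph{The right side.} Here the scaling gives $\frac{d^{\,j_0}}{d\lambda^{\,j_0}}\big|_{\lambda=-1}b_2(r,\lambda)=r^{-4-2j_0}\beta^{(j_0)}(-r^{-2})$, and since $m=2j_0+2$ the radial weight in the integrand collapses to $r^{-3}$; the substitution $x=r^{-2}$ then turns the integral into $\tfrac12\int_0^\infty\beta^{(j_0)}(-x)\,dx=\tfrac12\big[-\beta^{(j_0-1)}(-x)\big]_{0}^{\infty}=\tfrac12\beta^{(j_0-1)}(0)$, using $\beta^{(j_0-1)}(-\infty)=0$. This matches the left side; it also explains why \eqref{eq:scalcur-KDeltak} carries the \emph{reduced} order $\tilde j_0=j_0-1$ evaluated at $0$ (not at $-1$): one differentiation and the shift of base point are absorbed by the radial integration, while the choice of base point $\lambda=-1$ is precisely what makes the constant $\tfrac12$ match on both sides.

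\emph{The main obstacle.} The only delicate step is the middle one: for $m\ge4$ the inner integral $\int_0^\infty b_2(r,\lambda)r^{m-1}\,dr$ actually diverges for fixed $\lambda$, so the left side of \eqref{eq:modcur-put-lamda-to-1} must be read with the $\lambda$-integral done first, and converting it into the convergent Mellin integral $\int_0^\infty G(t)t^{\,j_0-1}\,dt$ together with the Fubini interchange is exactly where the hypotheses ``$m>2$'' (convergence, $j_0\ge1$) and ``$m$ even'' ($\nu^{-j_0}$ rational, $\frac{d^{j_0}}{d\lambda^{j_0}}$ a genuine derivative) are consumed. For odd $m$ the same scheme would instead produce a branch cut in $\nu^{-j_0}$ and a boundary integral $\int_0^\infty x^{-j_0}\beta(-x)\,dx$ in place of the clean jet --- which is why the paper works in even dimension only.
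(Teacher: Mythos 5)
Your argument is correct. Bear in mind that the paper does not actually prove this statement here --- it is quoted from Lemma 7.2 of the author's earlier work --- and the surrounding text (``after switching the order of integration in \eqref{eq:modcur-RDlatak-int} by an integration by parts argument, the contour integral in $\lambda$ can be achieved by evaluating at $\lambda=-1$'') indicates that the cited proof runs in the opposite order to yours: one first integrates by parts $j_0$ times in $\lambda$ under the contour integral, using $\frac{1}{2\pi i}\int_C e^{-\lambda}f\,d\lambda=\frac{1}{2\pi i}\int_C e^{-\lambda}f^{(j_0)}\,d\lambda$, which lowers the homogeneity of the integrand below $-m$ so that the $r$- and $\lambda$-integrations may be interchanged; the scaling \eqref{eq:modcur-b2homoge} then shows the inner $r$-integral equals $(-\lambda)^{-1}$ times the right-hand side of \eqref{eq:modcur-put-lamda-to-1}, and the remaining contour integral of $e^{-\lambda}(-\lambda)^{-1}$ gives $1$. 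You instead evaluate the two sides independently and identify both with $\tfrac12\beta^{(j_0-1)}(0)$, where $\beta(\nu)=b_2(1,\nu)$, via a Mellin transform and a residue at $\nu=0$ on the left and the substitution $x=r^{-2}$ on the right. Both routes rest on the same two ingredients (homogeneity plus a contour manipulation), but yours has the merit of exhibiting the common value in closed form, which is precisely what explains the reduced order $\tilde j_0=j_0-1$ and the base point $u=0$ in \eqref{eq:scalcur-KDeltak}--\eqref{eq:scalcur-HDeltak}; the price is the one step where you lean on conventions, namely the orientation of $C$ and the fact that after the rescaling $\lambda=r^2\nu$ the contour may be deformed so as not to enclose $\nu=0$. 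That step is legitimate here because the poles of $\beta$ lie in $\mathrm{spec}(k^2)$ and its modular translates, hence are uniformly bounded away from $0$ since $k=e^h$ is invertible and positive, but it deserves a sentence. Your closing remark that for $m\ge4$ the inner $r$-integral on the left of \eqref{eq:modcur-put-lamda-to-1} diverges, so the left side must be read with the $\lambda$-integral performed first as in \eqref{eq:modcur-RDlatak-int}, is the correct reading and matches the paper's own caveat following \eqref{eq:modcur-b2homoge}.
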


% To switch the order of integration, we use integration by parts
% \begin{align*}
%     \int_{C}  e^{-\lambda} b_2 d\lambda = \int_{C}  e^{-\lambda} \frac{d^j}{d\lambda^j} b_2 d\lambda,
%     \,\,\,\, \forall j \in \Z_+. 
% \end{align*}
%  Notice that  differentiating in $\lambda$ lower the homogeneity by $2$, we take $j$ to be $j_0 = \dim M/2 -1$, which is the smallest integer so that the homogeneity of $\frac{d^{j_0}}{\lambda^{j_0}} b_2$ is strictly less than $-\dim M$. 
\subsection{Rearrangement lemma}
\label{sec:rearrangementlema}
After integrating over the cosphere bundle, the $b_2$ term consists of two types of summands:
\begin{align}
    k^2 f_0(rk^2) \rho f_1(rk^2) \,\,\text{or} \,\, k^2 f_0(rk^2) \rho_1 f_1(rk^2)\rho_2 f_2(rk^2)
    \label{eq:pcal-types-b2term}
\end{align}
where $r\in[0,\infty)$, $k$ is the conformal factor and $f_j$ are smooth functions on $\R_+$ and $\rho_j$ in between are tensor fields over $M$ on which $C^\infty(M_\theta)$ acts from both sides. Recall the modular operator
\begin{align*}
    \modop(\rho) = k^{-2} \rho k^2,
\end{align*}
then the rearrangement lemma states that:
\begin{align*}
    \int_0^\infty k^2 f_0(rk^2) \rho f_1(rk^2) dr &= K(\modop)(\rho), \\
    \int_0^\infty k^2 f_0(rk^2) \rho_1 f_1(rk^2)\rho_2 f_2(rk^2)
    &= H(\modop_{(1)}, \modop_{(2)}) (\rho_1 \cdot \rho_2),
\end{align*}
 the subscript  $j=1,2$ in  $\modop_{(j)}$ indicates that $\modop$ acts on the $j$-th factor of the product $\rho_1 \cdot \rho_2$. The functions $K$ and $H$ are given by
 \begin{align}
     \label{eq:pcal-rearr-funKandH}
     \begin{split}
     K(s) &= \int_0^\infty f_0(r) f_1(sr) dr, \,\, s>0,
\\
     H(s,t) &= \int_0^\infty f_0(r) f_1(sr) f_2(str)dr, \,\, s,t>0.
     \end{split}
  \end{align}
For the proof of the lemma, see \cite{leschdivideddifference} (whose  notations are consistent with ours), also \cite{MR2907006} and  \cite{MR3194491}.

Later, we will  need  the explicit family of functions given below. Let $m=\dim M \ge 4$ be even,
  denote:
\begin{align}
    \label{eq:pcal-contcp}
    c_{(p,m)} &= (p-2+(m-2)/2)!/(p-1)!,\\
    \label{eq:pcal-onevarKpq}
    K_{(p,q,m)}(s) &= \frac{d^{m/2-2}}{du^{m/2-2}}\Big|_{u=0}
    (1-u)^{-p} (s-u)^{-q}, \\
    H_{(p,q,l,m)}(s,t) &= \frac{d^{m/2-2}}{du^{m/2-2}}\Big|_{u=0}
    (1-u)^{-p} (s-u)^{-q} (st-u)^{-l},
    \label{eq:pcal-twovarHpql}
\end{align}
where $s,t \ge 0$ and $p,q,r \in \Z_+$. We remark that the constant $c_{(p,m)}$ comes from the degenerate situation of $K_{(p,q,m)}$  (or $H_{(p,q,l,m)}$) by setting $s=1$ (or $s=t=1$). More precisely, $c_{(p,m)} = K_{(\alpha,\beta,m)}(1) = H_{(\alpha',\beta',\gamma',m)}(1,1)$, where $p=\alpha+\beta = \alpha'+\beta'+\gamma'$.

%\subsection{Terms from $a_2(b_0,p_2)$}
\subsection{Computation for the local curvature functions}

\label{subsec:comformodcur}

% After integration, the function $b_2(\xi,\lambda)$ becomes $b_2(r,\lambda)$ with $r = \abs\xi$ is the length function on the cotangent bundle. 
In this section, we will explain using specific examples how to obtain the right column of table \eqref{tab:typesofb2withmodfun} from the left. 

\begin{prop}
    Keep the notations.  All terms of $b_2(r,\lambda)$ belong to one of the types appeared in the left column of table \eqref{tab:typesofb2withmodfun}, while the right column is the final contribution from that term to the  modular curvature in \eqref{eq:scalcur-R-Deltak}. 
\end{prop}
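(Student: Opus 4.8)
The plan is to run every one of the roughly thirty terms of $b_2(\xi,\lambda)$ recorded in Appendix \ref{app:completeb2terrm} through the same four-step pipeline---angular integration, contour integration in $\lambda$, the rearrangement lemma, and evaluation of the resulting elementary integrals---and to check that the output of each term is one of the entries in the right column of Table \ref{tab:typesofb2withmodfun}. First I would apply the cosphere integration. Every term of $b_2(\xi,\lambda)$ is a product of resolvent powers $b_0=(k^2\abs\xi^2-\lambda)^{-1}$, of conformal factors $k$ and their covariant derivatives, and of exactly one purely Riemannian tensor contraction from the left column of Table \ref{tab:intsph} ($D^2\abs\xi^2\,\nabla^2\abs\xi^2$, $(D\abs\xi)(D^2\abs\xi^2)(\nabla^3\ell)$, $(D\sigma_{\slashed D})^2$, $((D\sigma_{\slashed D})\sigma_{\slashed D})^2$, and so on). Since $b_0$ and the $k$-factors depend on $\xi$ only through $\abs\xi^2$, the integral over $\mathbb S^{m-1}$ factors out and is evaluated by the substitution rules of Table \ref{tab:intsph} (the spinorial ones proved in Appendix \ref{app:intsphere}); the output $b_2(r,\lambda)$ is then, term by term, of one of the two shapes in \eqref{eq:pcal-types-b2term}, with $\rho=\nabla^2 k$ or $\rho=S_\Delta$ in the two-factor case and $\rho_1\cdot\rho_2=\nabla k\cdot\nabla k$ in the three-factor case.

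Next I would push each such term through Lemma \ref{lem:modcur-put-lamda-to-1}, which (using the homogeneity \eqref{eq:modcur-b2homoge}) replaces the contour integral in $\lambda$ by $\frac{d^{j_0}}{d\lambda^{j_0}}\Big|_{\lambda=-1}$ with $j_0=(m-2)/2$, and then through the rearrangement lemma of Section \ref{sec:rearrangementlema}, which replaces the radial integral $\int_0^\infty(\cdots)r^{m-1}\,dr$ by an action $K(\modop)(\rho)$ or $H(\modop_{(1)},\modop_{(2)})(\rho_1\cdot\rho_2)$ whose symbol is the one-dimensional integral \eqref{eq:pcal-rearr-funKandH}. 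The point that makes this bookkeeping finite is that, after these two operations, every $f_j$ that appears is a power of a linear function: a factor $b_0$ becomes $(1-u)^{-1}$ when it stays to the left of all the tensors, and becomes $(s-u)^{-1}$ or $(st-u)^{-1}$ once it is transported past one or two of them by $\modop$, where $u$ is the rescaled resolvent parameter. Evaluating the remaining elementary radial integrals of Beta-function type then produces exactly the constants $c_{(p,m)}$ of \eqref{eq:pcal-contcp} and the one- and two-variable functions $K_{(p,q,m)}$, $H_{(p,q,l,m)}$ of \eqref{eq:pcal-onevarKpq}--\eqref{eq:pcal-twovarHpql} (the jet $\frac{d^{m/2-2}}{du^{m/2-2}}\big|_{u=0}$ there being the image of the $\lambda$-derivative of Lemma \ref{lem:modcur-put-lamda-to-1}), i.e. the entries listed in the right column of Table \ref{tab:typesofb2withmodfun}.

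Since the statement is one of exhaustion over a finite list, the remaining work is the term-by-term check, for which no general principle substitutes. The key mechanism is the Leibniz rule: wherever a derivative falls on $k^2$ (for instance in $\nabla p_2$, $\nabla^2 p_2$, or $\nabla p_1$), the identity $\nabla(k^2)=(\nabla k)k+k(\nabla k)$ and its second-order analogue expand the term into pieces carrying $\nabla^2 k$ and pieces carrying $\nabla k\,\nabla k$, so that every raw term of $b_2$ lands in one of the two shapes \eqref{eq:pcal-types-b2term}; the $S_\Delta$-pieces come from $p_0=\frac14 k^2S_\Delta$ and from those angular integrals in Table \ref{tab:intsph} that return a multiple of $S_\Delta\abs\xi^2$, which is central and so is unaffected by $\modop$, accounting for the $c^{(m)}_\Delta k^{-m+2}S_\Delta$ summand. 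One also checks that, because of \eqref{eq:modcur-b2homoge}, multiplying by $r^{m-1}$ and differentiating $j_0$ times in $\lambda$ makes the radial integrals convergent, so that the pipeline is legitimate for each of the finitely many terms.

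The main obstacle I expect is not conceptual but the sheer bookkeeping: for each of the $\sim\!30$ terms one must track the precise placement of the conformal factors $k$ relative to the tensors $\rho_j$ (which decides whether a given $b_0$ contributes a $(1-u)$, an $(s-u)$, or an $(st-u)$ factor), the powers to which these linear factors appear, and the combinatorial constants generated by the angular and radial integrations. This is exactly the part delegated to the \textsf{Mathematica} computation; the independent consistency check is furnished later by the internal relations of Theorem \ref{thm:funrel-scalcur}, whose verification in dimension four is carried out in Section \ref{sec:scalcur-intrelation}.
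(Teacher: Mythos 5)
Your proposal follows essentially the same route as the paper: integrate over the cosphere via the substitution rules of Table \ref{tab:intsph}, reduce the contour integral to a $\lambda$-derivative at $\lambda=-1$ via Lemma \ref{lem:modcur-put-lamda-to-1}, apply the rearrangement lemma after matching each radial factor $r$ with a $k^2$, and evaluate the resulting one-dimensional integrals (via $u=1/x$) to land on $c_{(p,m)}$, $K_{(p,q,m)}$ and $H_{(p,q,l,m)}$, with the central $S_\Delta$ term handled as the degenerate commutative case. The paper carries out exactly this pipeline on the second row of Table \ref{tab:typesofb2withmodfun} as a representative example and delegates the remaining term-by-term bookkeeping to the computer algebra computation, just as you propose.
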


\begin{table}[!htb]
    \centering
    \begin{tabular}{l|l} 
        Types & Modular curvature functions  \\ \hline
     $\abs\xi^{\nu_1} k^{\mu_1} b_0^p (\nabla^2k) b_0^q$ &
     $k^{1-m} K_{(p,q,m)}(\modop)(\nabla^2 k)$ \\
     $\abs\xi^{\nu_2} k^{\mu_2} b_0^p (\nabla k) b_0^q (\nabla k) b_0^l$ &
     $k^{-m}H_{(p,q,l,m)}(\modop, \modop)(\nabla k \nabla k)$  \\ 
     $\abs\xi^{\tilde \nu_2} k^{\tilde \mu_2} b_0^p (\nabla k)^2 b_0^q$ &
     $k^{-m}K_{(p,q,m)}(\modop)(\nabla k \nabla k)$ \\
     $\abs\xi^{\nu_0} k^{\mu_0} b_0^p S_\Delta$ & 
     $k^{2-m}c_{(p,m)}S_\Delta$
    \end{tabular}
    \caption{Types of terms in $b_2(r,\lambda)$ and the resulting modular curvature function, there is an overall factor $1/2$ which is suppressed.Functions appeared in the right column are defined in \eqref{eq:pcal-contcp}, \eqref{eq:pcal-onevarKpq} and \eqref{eq:pcal-twovarHpql}.}
    \label{tab:typesofb2withmodfun}
\end{table}
 We will discuss this via examples. In the left column of table \eqref{tab:typesofb2withmodfun}, the exponents of $r = \abs\xi$, $k$ and $b_0$ are subjected to some relations due to the homogeneity of $b_2$. Namely, $b_2$ is of degree $-4$ in $r$ while $b_0 = (k^2r^2 -\lambda)^{-1}$ is of degree $-2$, therefore we have 
\begin{align*}
    \nu_1 = \tilde \nu_2 = 2(p+q)-4, \,\,\,  \nu_0 = 2p-4
    \,\,\, \nu_2 =  2(p+q+l)-4.
\end{align*}
Also the subscript of $\nu$ and $\mu$ stands for the number of arguments of the modular curvature function, for instance, in the first row, the modular curvature function is of one-variable, thus we use $\nu_1$ and $\mu_1$. The exponent of $k$ depends on $\nu$:
\begin{align*}
   \tilde \mu_2  = \nu_2,\,\,\, \mu_j = \nu_j+2-j, \, j=0,1,2,
\end{align*}
so that one will obtain the correct power of $k$ appeared in the right column. A conceptual reason for those exponents is that once we rewrite the modular scalar curvature in terms of $\log k$, the power of $k$ in front of the modular functions will become the same, which is equal to $(2-m)$. \par

We shall use the second row in table \ref{tab:typesofb2withmodfun} as an example to show how to achieve the left column. Recall the notations: $r = \abs\xi$, $m = \dim M$, $j_0 = m/2 -1$, $b_0 = (k^2 r^2 - \lambda)^{-1}$ and we need to compute 
\begin{align*}
    \int_0^\infty (\frac{d}{d\lambda})^{j_0}\Big|_{\lambda=-1} b_2(r,\lambda) r^{m-1}dr.    
\end{align*}
For the explicit term in second row of the table, the integral above becomes with respect to the substitution $r^2 \mapsto r$:
\begin{align*}
    k^{\mu_2} \int_0^\infty\left (\frac{d}{d\lambda}\right)^{j_0}\Big|_{\lambda=-1}
    (k^2 r - \lambda)^{-p} (\nabla k) (k^2 r - \lambda)^{-q} (\nabla k)
    (k^2 r - \lambda)^{-l}  r^{\nu_2/2} r^{(m-2)/2}(dr/2). 
\end{align*}
To apply the rearrangement lemma, we match each $r$ with a $k^2$:
\begin{align*}
   k^\beta \int_0^\infty\left (\frac{d}{d\lambda}\right)^{j_0}\Big|_{\lambda=-1}
    b_0^p (\nabla k) b_0^q (\nabla k) b_0^l(k^2r)^{\nu_2/2} (k^2r)^{(m-2)/2}(dk^2r/2),
\end{align*}
where  $b_0 = (k^2 r - \lambda)^{-1}$ and 
\begin{align*}
    \beta = \mu_2 -\nu_2 + (2-m) -2 = -m.
\end{align*}
Therefore we have explained the appearance of the factor $k^{-m}$ in the second row of table \ref{tab:typesofb2withmodfun}. To see the modular curvature function $H_{(p,q,l,m)}$, the rearrangement lemma tells us that (with a substitution $x = k^2r$ in mind) the integral is equal to
\begin{align*}
    k^{-m} H(\modop_{(1)},\modop_{(2)}) (\nabla k \nabla k), 
\end{align*}
with the function $H(s,t)$:
\begin{align*}
    &H(s,t)\\
    =&\,\, \int_0^\infty\left (\frac{d}{d\lambda}\right)^{j_0}\Big|_{\lambda=-1}
    (x-\lambda)^{-p} (sx-\lambda)^{-q} (stx-\lambda)^{-q} x^{\nu_2/2} x^{(m-2)/2} (dx/2).
\end{align*}
To continue the computation, set $u=1/x$:
\begin{align*}
    H(s,t)&=
    \int_0^\infty 
    u^{-j_0} \left (\frac{d}{d\lambda}\right)^{j_0}\Big|_{\lambda=-1}
    (1-u\lambda)^{-p} (s-u\lambda)^{-q} (st-u\lambda)^{-l} (du/2) \\
    &= \int_0^\infty \left (\frac{d}{dz}\right)^{j_0}\Big|_{z =-u}
  (1-z)^{-p} (s-z)^{-q} (st-z)^{-l} (du/2) \\
  &=  \int_{-\infty}^0 \left (\frac{d}{du}\right)^{j_0}
  (1-u)^{-p} (s-u)^{-q}  (st-u)^{-l} (du/2) \\
  &= \frac12 \left (\frac{d}{du}\right)^{j_0-1} \Big|_{u=0}(1-u)^{-p} (s-u)^{-q}  (st-u)^{-l}\\
  &=\frac12 H_{(p,q,l,m)}(s,t).
\end{align*}
A few words about the calculation above: since $m\ge 4$, $j_0-1\ge 0$, also in our later computation, the integers $p,q,l$ are all positive, in particular the function in the last line vanishes at $u=-\infty$. The coefficient $1/2$ is ignored in table \ref{tab:typesofb2withmodfun} since it appears for all terms thus can be moved to the overall factor.  \par

The first and third row of table \ref{tab:typesofb2withmodfun} can be justified by in a similar way. The last row is special since it is reduced to the commutative case. First, we can repeat the argument above by splitting $q = \alpha+\beta$, since $S_\Delta$ commutes with $b_0$, we can rewrite $\abs\xi^{\nu_0} k^{\mu_0} b_0^p S_\Delta$ as $\abs\xi^{\nu_0} k^{\mu_0} b_0^\alpha S_\Delta b_0^\beta$. Repeat the calculation,  we obtain:   
\begin{align*}
    \abs\xi^{\nu_0} k^{\mu_0} b_0^\alpha S_\Delta b_0^\beta \mapsto
    k^{2-m} K_{(\alpha,\beta,m)}(\modop)(S_\Delta).
\end{align*}
Notice that the modular operator acts trivially (as identity) on $S_\Delta$, so $K_{(\alpha,\beta,m)}(\modop)(S_\Delta) = K_{(\alpha,\beta,m)}(1)S_\Delta$, while
\begin{align*}
    K_{(\alpha,\beta,m)}(1) & = \brac{\brac{\frac{d}{du}}^{m/2-2}(1-u)^{-\alpha} (s-u)^{-\beta}}\Big|_{u=0,s=1} \\
    &=  \brac{    \brac{\frac{d}{du}}^{m/2-2} (1-u)^{-\alpha-\beta} } \Big|_{u=0}\\
    &= \frac{1}{(p-1)!} \brac{\frac{d}{du}}^{p-1+m/2-2}\Big|_{u=0} (1-u)^{-1} \\
    &=  \frac{(p-2+(m-2)/2)!}{(p-1)!} .
\end{align*}
On the other hand, we recall the approach used in the commutative situation. Start with the contour integral, since 
\begin{align*}
    \frac{1}{2\pi i} \int_C e^{-\lambda} b_0^p d\lambda & = 
    \frac{1}{2\pi i} \int_C e^{-\lambda} \frac{1}{(p-1)!}\frac{d^{p-1}}{d\lambda^{p-1}} b_0 d\lambda\\
    &= \frac{1}{(p-1)!} \frac{1}{2\pi i} \int_C e^{-\lambda} b_0 d\lambda  = \frac{1}{(p-1)!} e^{-k^2r^2}.\\
%    &= \frac{1}{(p-1)!} \frac{1}{2\pi i} \int_C  e^{-\lambda}
%    (k^2 r^2-\lambdaj)^{-1} d\lambda
\end{align*}
We continue
\begin{align*}
   &\,\, k^{\mu_0}  \int_0^\infty r^{\nu_0} e^{-k^2 r^2} r^{m-1}dr \\
   =&\,\, k^{\mu_0} (k^2)^{-\nu_0/2}k^{2-m} k^{-2}\int_0^\infty (k^2 r)^{\nu_0/2} e^{-k^2 r} (k^2r)^{(m-2)/2} (dk^2r/2) \\
   =&\,\, k^{2-m}\frac12 \Gamma(\nu_0/2 + (m-2)/2 + 1).
\end{align*}
Here we have used  the relation $\mu_0 =\nu_0+2$. Note that $\nu_0 = 2p-4$, $\Gamma(\nu_0/2 + (m-2)/2 + 1) = (p-2+(m-2)/2)!$, therefore by ignoring the overall constant $1/2$, we have confirmed again, $c_{p,m}$  equals  $(p-2+(m-2)/2)!/(p-1)!$.

We now furthermore show that $c^{(m)}_{\Delta_k}$ the coefficient of the scalar curvature term  agrees with \eqref{eq:introv2-V2term-local}.
% the same as in the commutative case, in particular, it is independent of $\dim M$ after factoring out the constant $(4\pi)^{-\dim M/2}$. 
Indeed, we first compute the constant shown in Figure \ref{fig:scaconst}: 
\begin{align}
   \label{eq:modcur-constcm} 
   c^{(m)}_{\Delta_k} = -\frac14 c_{(2,m)} + \frac{2 c_{(3,m)}}{3m}
    =( -\frac14+ \frac{1}{2\cdot 3}) (m/2-1)! = -\frac{1}{12}\Gamma(m/2).
\end{align}
% This agrees with the result in the classical case, for instance, in \cite{MR2239979} section III, eq. (18): for the rescaled operator $P = e^{-\phi}D^2 e^{\phi}$,  the second heat coefficient  is given by (with exact notations in \cite{MR2239979})
% \begin{align*}
%     e^{-2\phi}(E- \frac16 R(g)),
% \end{align*}
% where $R(g)$ ($=-S_\Delta$ in our paper)\footnote{The sign convention in \cite{MR2239979} is choosen in such a way that the scalar curvature is negative for spheres.} is the scalar curvature and $E$ the endomorphism of  the degree zero part of the Dirac operator, in our case, is equal to the scalar endomorphism $-S_\Delta /4$. 

On the other hand, we have an overall factor:
\begin{align*}
    \frac{1}{(2\pi)^m}\cdot \frac12 \cdot \mathrm{Vol}(\mathbb S^{m-1}),
    \,\, \text{with}\,\,\,
    \mathrm{Vol}(\mathbb S^{m-1}) = \frac{2 \pi^{m/2}}{\Gamma(m/2)}.
\end{align*}
Finally,  the coefficient for the scalar curvature term is equal to 
\begin{align}
    \label{eq:modcur-coeffforscalcur}
   \frac{1}{(2\pi)^m} \frac{\mathrm{Vol}(\mathbb S^{m-1})}{2} \brac{
    -\frac14 c_{(2,m)} + \frac{2 c_{(3,m)}}{3m}
    }
    =\frac{1}{(4\pi)^{m/2}}(-\frac{1}{12}).
\end{align}
 Let us summarize the discussion above as a proposition.
\begin{prop}
    Keep the notations. Let $K^{(\dim M)}_{\Delta_k}(s)$ and $H^{(\dim M)}_{\Delta_k}(s,t)$ be 
    the local curvature functions appeared in \eqref{eq:scalcur-R-Deltak}. The function  $K^{(\dim M)}_{\Delta_k}(s)$ is given in \textup{Fig.} \eqref{fig:onevarfun} and $H^{(\dim M)}_{\Delta_k}(s,t)$ has two parts, listed in \textup{Fig.} \eqref{fig:twovarfunp1} and \textup{Fig.} \eqref{fig:twovarfunp2}.
\end{prop}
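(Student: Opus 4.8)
The plan is to assemble, in order, the ingredients built up through Section \ref{sec:computation} and to show that their sum is exactly the closed expression displayed in the appendix. The starting data is the complete list of terms of the resolvent coefficient $b_2(\xi,\lambda)$ recorded in the left columns of Appendix \ref{app:completeb2terrm}; these arise by expanding \eqref{eq:pcal-b2-sym} with the symbol product \eqref{eq:pcal-a0-a2terms-bundle}, the symbols $p_2,p_1,p_0$ of $\Delta_k-\lambda$, and the already-computed $b_0$ and $b_1$ from \eqref{eq:pcal-b1}. Each such term is a product of a power of $\abs\xi$, a power of $k$, several copies of $b_0$, together with either one factor $\nabla^2 k$, or two factors $\nabla k$, or one central $S_\Delta$, possibly contracted against the isometry-invariant (hence, after deformation, central) tensors $\nabla^j\abs\xi^2$, $\nabla^2\tau$, $\nabla^3\ell$.

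First I would perform the angular integration over the unit cosphere $\mathbb S^{m-1}$ fiber by fiber, passing from each $b_2(\xi,\lambda)$ term to the corresponding $b_2(r,\lambda)$ term via the substitution table \ref{tab:intsph} (the spinor-bundle entries are justified in Appendix \ref{app:intsphere}, the rest in \cite{Liu:2015aa}); this introduces the explicit rational-in-$m$ factors and turns $\nabla^2\tau$, $\nabla^3\ell$ into multiples of $S_\Delta$. Next, by Lemma \ref{lem:modcur-put-lamda-to-1} the contour integral in $\lambda$ against $e^{-\lambda}$ becomes the operation $(d/d\lambda)^{j_0}|_{\lambda=-1}$ with $j_0=(m-2)/2$, after which every surviving term has one of the four shapes listed in the left column of Table \ref{tab:typesofb2withmodfun}. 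Applying the rearrangement lemma \eqref{eq:pcal-rearr-funKandH}, together with the change of variables worked out in Section \ref{subsec:comformodcur}, converts the radial integral of each shape into an action of the modular operator $\modop$ through the functions $K_{(p,q,m)}$ of \eqref{eq:pcal-onevarKpq}, $H_{(p,q,l,m)}$ of \eqref{eq:pcal-twovarHpql}, or the scalar $c_{(p,m)}$ of \eqref{eq:pcal-contcp}; the exponents of $\abs\xi$ and $k$, which are rigidly fixed by the homogeneity \eqref{eq:modcur-b2homoge}, are precisely what forces the uniform prefactors $k^{1-m}$, $k^{-m}$ and $k^{2-m}$ recorded in that table.

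Summing all contributions and collecting by type, the terms carrying $\nabla^2 k$ or $(\nabla k)^2$ in a single $K_{(p,q,m)}$-slot aggregate into $K^{(m)}_{\Delta_k}(\modop)(\nabla^2 k)\cdot g^{-1}$, the genuinely two-sided terms into $H^{(m)}_{\Delta_k}(\modop_{(1)},\modop_{(2)})(\nabla k\nabla k)\cdot g^{-1}$, and the $S_\Delta$-terms into the constant $c^{(m)}_{\Delta_k}$ already evaluated in \eqref{eq:modcur-constcm}; this is the structural content of \eqref{eq:scalcur-R-Deltak}. Performing the remaining finite sum of the rational functions $K_{(p,q,m)}$ (respectively $H_{(p,q,l,m)}$), with the multiplicities read off from Appendix \ref{app:completeb2terrm}, yields the closed formulas in Fig. \eqref{fig:onevarfun} (respectively Fig. \eqref{fig:twovarfunp1} and Fig. \eqref{fig:twovarfunp2}), which is the assertion.

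The step I expect to be the genuine obstacle is nothing conceptual but the sheer scale of the algebraic bookkeeping: correctly tracking the non-commutative ordering inside each $b_2$ term (the tensors $\nabla^j\tau$ and $\nabla^j\ell$ in the symbol product \eqref{eq:pcal-symproduct} are not symmetric, so the order in which their indices contract is significant), matching each radial monomial in $r$ with the correct power of $k^2$ before the rearrangement lemma can be invoked, and summing on the order of thirty rational functions without slips. This is exactly the part delegated to the \textsf{Mathematica} code; the functional relations of Theorem \ref{thm:funrel-scalcur}, proved independently by varying the Einstein-Hilbert action, then serve as an external consistency check on the outcome.
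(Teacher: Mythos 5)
Your proposal follows exactly the pipeline the paper itself uses: expand $b_2$ via \eqref{eq:pcal-b2-sym}, integrate over the cosphere bundle with Table \ref{tab:intsph}, reduce the contour integral by Lemma \ref{lem:modcur-put-lamda-to-1}, classify the surviving terms into the four shapes of Table \ref{tab:typesofb2withmodfun}, apply the rearrangement lemma to produce $K_{(p,q,m)}$, $H_{(p,q,l,m)}$ and $c_{(p,m)}$, and delegate the final summation of the roughly thirty rational functions to the \textsf{Mathematica} code recorded in Appendix \ref{app:completeb2terrm}. This is essentially identical to the paper's own argument, including the correct identification of the bookkeeping (ordering of non-symmetric tensors, matching powers of $r$ with $k^2$) as the only real difficulty.
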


To end this section, we  write down explicitly the  modular curvature functions $K^{(\dim M)}_{\Delta_k}(s)$  in dimension $4,6$ and $8$.%The modular curvature function $ K^{(4)}_{\Delta_k}(s)$ is shown in Figure \ref{fig:onevarfun}, in dimesion $4,6$ and $8$, we have 
\begin{align*}
    K^{(4)}_{\Delta_k}(s) &= -1/(2s)  \\
     K^{(6)}_{\Delta_k}(s) &= -\frac{2 \mathit{s}+\sqrt{\mathit{s}}+2}{3 \mathit{s}^2} \\
K^{(8)}_{\Delta_k}(s) &= -\frac{2 \mathit{s}^{3/2}+3 \mathit{s}^2+4 \mathit{s}+2 \sqrt{\mathit{s}}+3}{2 \mathit{s}^3}
\end{align*}
While for the two variable function, $H^{(\dim M)}_{\Delta_k}(s,t)$
\begin{align*}
   H^{4}_{\Delta_k}(s,t) &= \frac{1}{\mathit{s}^{3/2} \mathit{t}}, \\
   H^{6}_{\Delta_k}(s,t) &= 
   \frac{\left(4 \mathit{s}+\sqrt{\mathit{s}}+3\right) \mathit{t}+\left(2 \sqrt{\mathit{s}}+1\right) \sqrt{\mathit{t}}+4}{3 \mathit{s}^{5/2} \mathit{t}^2},
\end{align*}
and
\begin{align*}
   &\,\, H^{8}_{\Delta_k}(s,t) \\
  =& \,\, {\scriptstyle
  \frac{\left(4 \mathit{s}^{3/2}+3 \mathit{s}+2 \sqrt{\mathit{s}}+1\right) \mathit{t}^{3/2}+\left(2 \mathit{s}^{3/2}+6 \mathit{s}^2+5 \mathit{s}+\sqrt{\mathit{s}}+4\right) \mathit{t}^2+\left(8 \mathit{s}+3 \sqrt{\mathit{s}}+5\right) \mathit{t}+\left(4 \sqrt{\mathit{s}}+2\right) \sqrt{\mathit{t}}+6}{2 \mathit{s}^{7/2} \mathit{t}^3}.
  }
\end{align*}
% \begin{table}[!htpb]
%     \centering
%  \begin{tabular}[]{lc}
%     \dim M  & $K_{\Delta_k}(s)$ \\  \hline
%    4 & $-1/(2s)$  \\
%     6& \begin{aligned}
%         {\scriptstyle  -\frac{2 \mathit{s}+\sqrt{\mathit{s}}+2}{3 \mathit{s}^2}}
%     \end{aligned}
% \end{tabular}
% \caption{One-variable function $K_{\Delta_k}$ in different dimensions}
%     \label{tab:onevarfun-dims}
% \end{table}

\appendix 
%\section{Curvature densities in terms of $\log k$}
\section{Covariant derivatives of $e^h$ after deformation}
The main goal of this section is to show that the results in \cite[sec. 6.1]{MR3194491} for noncommutative two tori survives in our deformed tensor calculus setting. To be precise, let $h = \log k$ or $k = e^h$ in the deformed algebra $C^\infty(M_\theta)$ with $h = h^*$. Recall the modular operator $\modop(x) = k^{-2} x k^2$ and its logarithm $\logmodop = -2[h,\cdot]$.  We would like to prove the following identities:
\begin{align}
    \label{eq:pcal-nablak2nablah}
    \nabla k &= k f_1(\logmodop)(\nabla h), \\
    \Delta k &= k \left[f_1(\logmodop)(\Delta h) + 2 g_2(\logmodop_{(1)}, \logmodop_{(2)})(\nabla h \nabla h) g^{-1}\right].
    \label{eq:pcal-Deltak2Deltah}
\end{align}
where the functions $f_1$ and $g_2$ are given in \eqref{eq:scalcur-f1g2} and \eqref{eq:scalcur-g2} respectively.  
Consider the family of automorphism $\alpha_t(x) = e^{it\slashed D} x e^{-it\slashed D}$, $t\in\R$, whose infinitesimal version is denoted by $\delta(x) = i[\slashed D, x]$, namely
\begin{align*}
    \delta(x) = \frac{d}{dt}\Big|_{t=0}\alpha_t(x). 
\end{align*}
From \cite[sec. 6.1]{MR3194491} or \cite[sec. 3]{leschdivideddifference} , we know that:
\begin{align}
    \label{eq:pcal-deltak2deltah}
    \delta k &= k f_1(\logmodop)(\delta h), \\
    \delta^2 k &= k \left[f_1(\logmodop)(\delta^2 h) + 2 g_2(\logmodop_{(1)}, \logmodop_{(2)})(\delta h \delta h)\right].
    \label{eq:pcal-delta^2k2delta^2h}
\end{align}
Since the one-form $\nabla k$ can be identified with an operator via Clifford multiplication $\mathbf c(\nabla k) = [\slashed D, k]$, equation \eqref{eq:pcal-deltak2deltah} implies \eqref{eq:pcal-nablak2nablah}. 
\par
To prove \eqref{eq:pcal-Deltak2Deltah}, we need the following fact (cf. \cite[Prop. 3.45]{berline1992heat}): that for any smooth function $f$, let $X_f = \grad f$, we have 
\begin{align*}
    [\slashed D, \mathbf c(df)] = [\slashed D,[\slashed D,f]] =-2\nabla^{\slashed S}_{X_f} + \Delta f,
\end{align*}
where $\nabla^{\slashed S}$ is the connection on the spinor bundle. Similar to \eqref{eq:pcal-nablak2nablah}, we have 
$\grad k = k F(\logmodop)(\grad h)$ and $\nabla^{\slashed S}_{X_k} = k F(\logmodop)(\nabla^{\slashed S}_{X_h})$.

\begin{align*}
    \Delta k & = [\slashed D,[\slashed D,k]] + 2 \nabla^{\slashed S}_{X_k}
    =\delta^2(k) + 2 \nabla^{\slashed S}_{X_k} \\
    &= k \left[F(\logmodop)(\delta^2 h) + 2 G(\logmodop_{(1)}, \logmodop_{(2)})(\delta h \delta h)\right]
    + 2 k F(\logmodop)(\nabla^{\slashed S}_{X_h}) \\
    &= k F(\logmodop) ( \delta^2 h+ 2 \nabla^{\slashed S}_{X_h}) + 2 kG(\logmodop_{(1)}, \logmodop_{(2)})(\delta h \delta h) \\
    &= k F(\logmodop)(\Delta h) + 2k G(\logmodop_{(1)}, \logmodop_{(2)}) (\nabla h \nabla h) g^{-1}.
\end{align*}
\par

\section{Integration over the cosphere bundle  $S^*M$} 
\label{app:intsphere}
\begin{lem}
    Let $m =\dim M$, $g^{-1}$ be the metric on $T^*M$, $S^*M$ is the associated cosphere bundle. In a local coordinate $(x,\xi)$,
    We identify the fiber at $x$ $S^*_x M \cong S^{m-1} \subset T^*_xM \cong \R^m$ with metric $(g^{-1})^{ij}|_x = \delta_{ij}$.
    \begin{align}
        \int_{S^*M} (D\abs\xi^2)^2 = \frac4m \abs\xi^2 \mathrm{vol}(S^{m-1}) g^{-1}, \,\,\,\,
  \int_{S^*M} (D^2\abs\xi^2) = 2  \mathrm{vol}(S^{m-1}) g^{-1}
        %\int_{S^{m-1}} (D\abs\xi^2)^2 = \frac4m \mathrm{vol}(S^{m-1})
        \label{eq:pcal-intDxi^2}
    \end{align}
    while
    \begin{align}
   \int_{S^*M}     (D\abs\xi^2)^2 (\nabla^2 \abs\xi^2) =0, \,\,\,\,
   \int_{S^*M} (D^2\abs\xi^2) (\nabla^2 \abs\xi^2) =\frac{\mathrm{vol}(S^{m-1})}{m} \frac43 S_\Delta \abs\xi^2,
        \label{eq:pcal-intDxinablaxi}
    \end{align}
    and
    \begin{align}
      \int_{S^*M} (D\abs\xi^2) (D^2\abs\xi^2)(\nabla^3 \ell) = -\frac{\mathrm{vol}(S^{m-1})}{m} \frac83 S_\Delta \abs\xi^2.
        \label{eq:pcal-Dxinablaxi-nablaell}
    \end{align}
%      At last,
%     \begin{align*}
%         \int_{S^*M}(D\abs\xi^2)(D\sigma_{\slashed D})\sigma_{\slashed D} &= \mathrm{vol}(S^{m-1})\frac2m \abs\xi^2 g^{-1} + \kappa_1,
%     \\
%     \int_{S^*M}(D\sigma_{\slashed D})^2& =  \mathrm{vol}(S^{m-1}) g^{-1} +  \kappa_2,\\
%     \int_{S^*M} \brac{(D\sigma_{\slashed D})\sigma_{\slashed D}}^2 
%     & = \mathrm{vol}(S^{m-1})\frac{m-2}{m} \abs\xi^2 g^{-1}+ \kappa_3
% \end{align*}
% where $\kappa_j$, $j=1,2,3$ are  anti-symmetric contravariant two-tensors. 
\end{lem}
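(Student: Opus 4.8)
The plan is to reduce each identity to an elementary integration over the round sphere. First I would fix geodesic normal coordinates centred at the base point $x$, so that $g^{ij}|_x=\delta^{ij}$ and the Christoffel symbols vanish at $x$; all tensors below are understood at the point $(x,\xi)$. Writing $\abs\xi^2=g^{ij}\xi_i\xi_j$, the vertical differential along the fibre $T^*_xM$ gives $(D\abs\xi^2)^k=2g^{kj}\xi_j$, a tangent vector at $x$, and $D^2\abs\xi^2=2g^{-1}$, which is independent of $\xi$. By Example~\ref{eg:pcal-symofDelta}, $\nabla\abs\xi^2=0$, so the only horizontal quantities that enter are $\nabla^2\abs\xi^2$ and $\nabla^3\ell$. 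Starting from $p(d\ell(\xi_x,y))=g^{-1}(d\ell,d\ell)$, the Leibniz rule, $\nabla g^{-1}=0$, and $\nabla_y(d\ell)|_{y=x}=\nabla_y^2\ell|_{y=x}=\partial_y^2\ell|_{y=x}=0$, one obtains $(\nabla^2\abs\xi^2)_{\mu\nu}=2(\nabla^3\ell)_{\mu\nu\rho}\xi^\rho$. Because $\partial_y^3\ell|_{y=x}=0$ the totally symmetric part of $\nabla^3\ell$ vanishes at $y=x$, and the commutation rule for the third covariant derivative of the $1$-form $d\ell$ (where $d\ell=\xi$) then forces $(\nabla^3\ell)_{\mu\nu\rho}|_{y=x}=-\tfrac13(R_{\mu\nu\rho c}+R_{\mu\rho\nu c})\xi^c$; contracting one $\xi$ and using the pair-antisymmetry of $R$ gives $(\nabla^2\abs\xi^2)_{\mu\nu}=-\tfrac23 R_{\mu\rho\nu c}\xi^\rho\xi^c$.

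With these two curvature expressions available, the five formulas become bookkeeping with a single sphere moment. The first two identities follow from the $\SO(m)$-invariance of $d\sigma$, which forces $\int_{S^{m-1}}\xi_i\xi_j\,d\sigma$ to be a multiple of $\delta_{ij}$, together with the normalisation $\int_{S^{m-1}}\abs\xi^2\,d\sigma=\mathrm{vol}(S^{m-1})$ and the vanishing of odd moments. The identity $\int_{S^*M}(D\abs\xi^2)^2(\nabla^2\abs\xi^2)=0$ is in fact pointwise zero, since $(D\abs\xi^2)^2(\nabla^2\abs\xi^2)=-\tfrac83 R_{\mu\rho\nu c}\xi^\mu\xi^\nu\xi^\rho\xi^c=0$ as $R_{\mu\rho\nu c}$ is antisymmetric under $\mu\leftrightarrow\rho$ while $\xi^\mu\xi^\rho$ is symmetric. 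For $\int_{S^*M}(D^2\abs\xi^2)(\nabla^2\abs\xi^2)$, contracting $D^2\abs\xi^2=2g^{-1}$ with $-\tfrac23 R_{\mu\rho\nu c}\xi^\rho\xi^c$ gives a multiple of the Ricci contraction $R_{\mu\rho\nu c}g^{\mu\nu}\xi^\rho\xi^c$, and the moment formula turns it into a multiple of $S_\Delta\abs\xi^2$. Finally, for $\int_{S^*M}(D\abs\xi^2)(D^2\abs\xi^2)(\nabla^3\ell)$, contracting $D\abs\xi^2\otimes D^2\abs\xi^2=4\,\xi^\mu g^{\nu\rho}$ against $(\nabla^3\ell)_{\mu\nu\rho}$ and noting that each $g^{-1}$-trace of $R_{\mu\nu\rho c}+R_{\mu\rho\nu c}$ equals $\mp\Ric_{\mu c}$ by one use of the first Bianchi identity shows that this integrand is exactly $(-2)$ times the previous one, so integration yields $-\tfrac{8}{3m}S_\Delta\,\mathrm{vol}(S^{m-1})\abs\xi^2$, matching Table~\ref{tab:intsph}. (On $S^*M$, $\abs\xi=1$; the factor $\abs\xi^2$ on the right-hand sides is kept only to record the homogeneity degree of the integrands in $\xi$, as needed in \eqref{eq:modcur-b2rlambda}.)

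The one genuinely delicate point is the computation of $(\nabla^3\ell)|_{y=x}$: beyond the coefficient $-\tfrac13$, one must fix the curvature sign convention so that it is consistent with the rest of Section~\ref{sec:widomcal}, in particular with $\slashed D^2=\Delta+\tfrac14 S_\Delta$ and with $2(\nabla^2)_{ij}\tau=K_{ji}$ used in the computation of $\sigma_{\slashed D^2}$. Once that convention is fixed the signs in the last two identities are forced, and everything else is just the symmetries of the Riemann tensor together with the second moment of the round sphere computed above.
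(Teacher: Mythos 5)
Your proposal is correct and takes essentially the same route as the paper's (very terse) proof: the paper simply records the two pointwise contractions $(D^2\abs\xi^2)(\nabla^2\abs\xi^2)=-\tfrac43\sum_{k,p,l}\xi_p\xi_l R_{pkkl}$ and $(D\abs\xi^2)(D^2\abs\xi^2)(\nabla^3\ell)=-\tfrac83\sum_{k,p,l}\xi_p\xi_k R_{plkl}$, which are exactly what your formulas for $\nabla^3\ell|_{y=x}$ and $\nabla^2\abs\xi^2$ yield, and then integrates via the second moment $\int_{S^{m-1}}\xi_i\xi_j=\tfrac{\delta_{ij}}{m}\abs\xi^2\mathrm{vol}(S^{m-1})$. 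One cosmetic remark: the two $g^{-1}$-traces of $R_{\mu\nu\rho c}+R_{\mu\rho\nu c}$ follow already from the antisymmetry of $R$ in its first pair (the first Bianchi identity is what you actually need one step earlier, to pin down $\nabla^3\ell$ itself), and your convention-independent observation that the last integrand is $-2$ times the preceding one is the cleanest way to see the relative sign in \eqref{eq:pcal-Dxinablaxi-nablaell}.
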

\begin{proof}
   \begin{align*}
        (D^2\abs\xi^2) (\nabla^2 \abs\xi^2) = - \frac43 \sum_{k,p,l}\xi_p\xi_l R_{pkkl}. 
    \end{align*}
    \begin{align*}
        (D\abs\xi^2) (D^2\abs\xi^2)(\nabla^3 \ell) = -\frac83 \sum_{k,p,l}\xi_p\xi_k R_{plkl}
    \end{align*}
\end{proof}

\begin{lem}
    Keep the notations in the previous lemma. 
    \begin{align}
     \label{eq:pcal-intsigmaD1}
        \int_{S^*M}(D\abs\xi^2)(D\sigma_{\slashed D})\sigma_{\slashed D} &= \mathrm{vol}(S^{m-1})\frac2m \abs\xi^2 g^{-1} + \kappa_1,
    \\
    \label{eq:pcal-intsigmaD2}
    \int_{S^*M}(D\sigma_{\slashed D})^2& =  \mathrm{vol}(S^{m-1}) g^{-1} +  \kappa_2,\\
    \int_{S^*M} \brac{(D\sigma_{\slashed D})\sigma_{\slashed D}}^2 
    & = \mathrm{vol}(S^{m-1})\frac{-(m-2)}{m} \abs\xi^2 g^{-1}+ \kappa_3
    \label{eq:pcal-intsigmaD3}
\end{align}
where $\kappa_j$, $j=1,2,3$ are  anti-symmetric contravariant two-tensors.
\end{lem}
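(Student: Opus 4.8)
The plan is to follow the same scheme as the preceding lemma: reduce all three integrals to elementary Clifford algebra in a fixed $g$-orthonormal coframe $\set{e_j}$ at $x$ (so $(g^{-1})^{jk}=\delta^{jk}$), together with the second moments of the round measure on the unit sphere. First I would recall from \eqref{eq:pcal-symofD} that $\sigma_{\slashed D}(\xi)=i\mathbf c(\xi)=i\sum_j\xi_j\mathbf c^j$ with $\mathbf c^j\defeq\mathbf c(e_j)$; since this is linear in the fibre variable, its vertical differential is the \emph{constant} ($\xi$-independent) tensor $(D\sigma_{\slashed D})^j=i\mathbf c^j$, whereas $(D\abs\xi^2)^j=2\xi_j$. (The sign ambiguity $\sigma_{\slashed D}=\pm i\mathbf c(\xi)$ appearing in the paper is immaterial here, since $\sigma_{\slashed D}$ always enters in pairs.) Consequently each integrand, viewed as an $\End(\slashed S)$-valued contravariant $2$-tensor with free indices $j,k$, is an explicit Clifford word: $-2\,\xi_k\xi_l\,\mathbf c^j\mathbf c^l$ for the first, $-\mathbf c^j\mathbf c^k$ for the second, and $\xi_l\xi_p\,\mathbf c^j\mathbf c^l\mathbf c^k\mathbf c^p$ for the third. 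These are homogeneous in $\xi$ of degrees $2$, $0$, $2$ respectively, so the angular integration over $S^{m-1}$ --- which restores the advertised power of $\abs\xi^2$ --- needs only $\int_{S^{m-1}}\xi_i\xi_j\,d\sigma_{S^{m-1}}=m^{-1}\mathrm{vol}(S^{m-1})\,\delta_{ij}$.

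Next I would invoke the Clifford relations $\mathbf c^i\mathbf c^j+\mathbf c^j\mathbf c^i=-2\delta^{ij}$, which give $(\mathbf c^i)^2=-I$ and, after splitting the sum into its diagonal and off-diagonal parts, the contraction identity $\sum_l\mathbf c^l\mathbf c^k\mathbf c^l=(m-2)\,\mathbf c^k$. Substituting yields
\begin{align*}
\int_{S^*M}(D\abs\xi^2)(D\sigma_{\slashed D})\sigma_{\slashed D}&=-\tfrac2m\,\mathrm{vol}(S^{m-1})\,\abs\xi^2\,\mathbf c^j\mathbf c^k,\\
\int_{S^*M}(D\sigma_{\slashed D})^2&=-\mathrm{vol}(S^{m-1})\,\mathbf c^j\mathbf c^k,\\
\int_{S^*M}\brac{(D\sigma_{\slashed D})\sigma_{\slashed D}}^2&=\tfrac{m-2}{m}\,\mathrm{vol}(S^{m-1})\,\abs\xi^2\,\mathbf c^j\mathbf c^k.
\end{align*}
The final step is the decomposition $\mathbf c^j\mathbf c^k=\tfrac12(\mathbf c^j\mathbf c^k+\mathbf c^k\mathbf c^j)+\tfrac12(\mathbf c^j\mathbf c^k-\mathbf c^k\mathbf c^j)=-\delta^{jk}I+\tfrac12[\mathbf c^j,\mathbf c^k]$: the symmetric summand is $-(g^{-1})^{jk}$ times the identity endomorphism and produces exactly the stated $g^{-1}$-terms (the scalar factors combining to $+\tfrac2m$, $+1$, $-\tfrac{m-2}{m}$ as required), while the antisymmetric summand defines a frame-independent antisymmetric contravariant $2$-tensor with values in $\End(\slashed S)$, which is --- up to the same scalar factors --- the asserted $\kappa_1,\kappa_2,\kappa_3$.

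The only real obstacle I anticipate is bookkeeping: one must respect the non-commutativity of the $\mathbf c^j$ everywhere, in particular order the quartic Clifford word underlying the third integral correctly and verify $\sum_l\mathbf c^l\mathbf c^k\mathbf c^l=(m-2)\mathbf c^k$ with care. It is also worth recording why the $\kappa_j$ are retained rather than discarded: in the symbol product \eqref{eq:pcal-symproduct} these tensors get contracted not only against symmetric covariant tensors such as $\nabla^2 q$ (against which $\kappa_j$ vanishes) but also against $\nabla^2\tau$ and $\nabla^3\ell$, so the antisymmetric parts genuinely contribute later and it is cleaner to isolate them at this stage.
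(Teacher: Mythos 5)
Your proposal is correct and follows essentially the same route as the paper: fix a $g$-orthonormal coframe, reduce each integrand to a Clifford word, use the second moments $\int_{S^{m-1}}\xi_i\xi_j$ together with the Clifford relations (the paper handles the contraction $\sum_p\mathbf c^p\mathbf c^k\mathbf c^p=(m-2)\mathbf c^k$ by splitting into $j=k$ and $j\neq k$ cases, which is equivalent to your uniform decomposition $\mathbf c^j\mathbf c^k=-\delta^{jk}I+\tfrac12[\mathbf c^j,\mathbf c^k]$), and identify the symmetric part with $g^{-1}$ and the commutator part with the $\kappa_j$. Your closing remark on why the antisymmetric remainders are retained is also consistent with the paper's computation of $\nabla^2\tau$ as an antisymmetric curvature term.
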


\begin{proof}
   %For terms involve $\sigma_{\slashed D}$, 
    Choose a orthonormal basis, so that: 
    \begin{align*}
        g^{-1} = \sum_j e_j \otimes e_j, \,\,\,
 \sigma_{\slashed D}(\xi) =\sum_k i\xi_k \mathbf c(e_k). 
    \end{align*}
    Compute
    \begin{align*}
        (D\abs\xi^2)(D\sigma_{\slashed D})\sigma_{\slashed D} 
        = \sum_{j,k,l} ( 2\xi_j e_j)(i \mathbf c(e_k) e_k) (i\xi_l \mathbf c(e_l)).
    \end{align*}
    Recall that $\int_{S^{m-1}}\xi_j\xi_l =0$ for $j\neq l$, while for $j=l$,  $\int_{S^{m-1}}\xi_j^2 = \abs\xi^2 \mathrm{vol}(S^{m-1})/m$, also $\mathbf c(e_j)^2 = -1$:
    \begin{align*}
     &   \int_{S^*M}\brac{
       \sum_{j,k,l} ( 2\xi_j e_j)(i \mathbf c(e_k)e_k) (i\xi_l \mathbf c(e_l)) 
        } \\ =& \,\, \frac{\mathrm{vol}(S^{m-1})}{m} \brac{
        \sum_j 2 \abs\xi^2  e_j \otimes e_j - 
        2 \abs\xi^2 \sum_{j\neq k}  \mathbf c(e_j) \mathbf c(e_k) e_j \otimes e_k 
    },
    \end{align*}
    the first term gives rise to $\mathrm{vol}(S^{m-1})\frac2m \abs\xi^2 g^{-1}$ and the second term is a anti-symmetric tensor. \par

    Now we consider $(D\sigma_{\slashed D})^2$, since $(D\sigma_{\slashed D})_j = i\mathbf c(e_j)$
\begin{align*}
    (D\sigma_{\slashed D})^2 &=\sum_{j,k} i^2 \mathbf c(e_j) e_j \mathbf c(e_k) e_k \\
    &= \sum_j e_j \otimes e_j + \sum_{j\neq k} \mathbf c(e_j) \mathbf c(e_k) e_j \otimes e_k.
\end{align*}
After applying $\int_{S^*M}$, the first term becomes $\mathrm{vol}(S^{m-1}) g^{-1}$ in \eqref{eq:pcal-intsigmaD2} and the second term results an anti-symmetric tensor.

 To show the last one:  for $\brac{(D\sigma_{\slashed D}) \sigma_{\slashed D}}^2$,
    \begin{align*}
        \brac{(D\sigma_{\slashed D}) \sigma_{\slashed D}}^2_{jk}
        =  \sum_{p,l} \xi_p \xi_l \mathbf c(e_j) \mathbf c(e_p) \mathbf c(e_k) \mathbf c(e_l), 
    \end{align*}
  after integration, only those terms with $p=l$ will survive,  the result is given by:
  \begin{align*}
      \mathrm{vol}(S^{m-1}) \frac1m \abs\xi^2 \sum_{p}  \mathbf c(e_j) \mathbf c(e_p) \mathbf c(e_k) \mathbf c(e_p).
  \end{align*}

  For $j=k$, which corresponds to the symmetric part, 
  \begin{align*}
      \sum_{p=1}^m  \mathbf c(e_j) \mathbf c(e_p) \mathbf c(e_j) \mathbf c(e_p) =- (m-2) 
  \end{align*}
  For $j\neq k$:
  \begin{align*}
   \sum_p   \mathbf c(e_j) \mathbf c(e_p) \mathbf c(e_k) \mathbf c(e_p) = (m-2) \mathbf c(e_j)\mathbf c(e_k)
  \end{align*}
  which is anti-symmetric in $j,k$. Sum up,
  \begin{align*}
     \int_{S^*M} \brac{(D\sigma_{\slashed D})\sigma_{\slashed D}}^2 
     = \mathrm{vol}(S^{m-1})\frac{-(m-2)}{m} \abs\xi^2 g^{-1} + \text{an anti-symmetric tensor}.
  \end{align*}
   \end{proof}

   \section{List of complete terms of $b_2$}    
   \label{app:completeb2terrm}
   As being mentioned before, the explicit expression of  $b_2$ contains approximately $30$ terms. We use \textsf{Mathematica} to assist the computation and record the results and mains steps below from Figure \ref{fig:a_2(b_0,p_2)_partI} to Figure \ref{fig:a_0(b_0,p_0)}. The terms are grouped  according to the summands of $b_2$:
   \begin{align*}
        b_2&=\left[\mathit{a}_0\left(b_0,p_0\right)+\mathit{a}_0\left(b_1,p_1\right)+\mathit{a}_1\left(b_0,p_1\right) \right. \\
    &+ \left. \mathit{a}_1\left(b_1,p_2\right)+\mathit{a}_2\left(b_0,p_2\right)\right] (-b_0).
   \end{align*}

   The notations appeared in the figures are  consistent with notations used in the paper: 
   \begin{itemize}
       \item  $\mathtt{ B_0}$  stands for $b_0 = (k^2\abs\xi^2 - \lambda)^{-1}$ in the paper and $\sigma_{\mathrm D}$ is the symbol of the Dirac operator $\slashed D$;
       \item All the $\xi$ in the figures should be understood as $\abs\xi$;
       \item The horizontal and vertical differential: $\nabla k$ and $D\abs\xi^2$ are denoted as $\mathtt{\triangledown[k]}$ and $\mathtt{\mathcal D[\xi^2]}$;
       \item %In Firgure \ref{fig:a_2(b_0,p_2)_partI} and \ref{fig:a_0(b_1,p_1)},
           The third column consists of modular curvature functions defined in equations \eqref{eq:pcal-contcp} to \eqref{eq:pcal-twovarHpql}. Namely, $\mathtt{Kmod[p,q, s,dimM]}$, $\mathtt{H[p,q,r, s,t,dimM]}$ and $\mathtt{c[p,dimM]}$ are associated to $K_{(p,q,m)}(s)$ and $H_{(p,q,r,m)}(s,t)$, and   $c_{(p,m)}$ respectively (cf. Eq. \eqref{eq:pcal-contcp}, \eqref{eq:pcal-onevarKpq} and \eqref{eq:pcal-twovarHpql}). 
       \item $\mathcal S$ (appeared in Figure \ref{fig:a_1(b_1,p_2)_partI} and \ref{fig:a_0(b_0,p_0)}, middle column) is the scalar curvature function.
   \end{itemize}
   %$\mathrm B_0$ in the figures is just $b_0 = (k^2\abs\xi^2 - \lambda)^{-1}$. 
   Each Figure consists of three columns. The left column contains the original terms before any integrations. The middle column is obtained by integrating  the left  one over the unit cosphere bundle. In terms of programing, this step is merely apply the substitution rules listed in Table \ref{tab:intsph}. The result is upto an overall factor $\mathrm{Vol}(\mathbb S^{m-1})$, $m=\dim M$. The noncommutative factors appeared here are the derivatives of the conformal factor: $\nabla^2 k$ or $\nabla k$, they give rise to one and two variable modular curvature functions
    respectively. According to the rearrangement lemma in section \ref{sec:rearrangementlema}, passing from the middle  to the right column, one just need to collect the exponents of the $\mathrm B_0$ factors, for example, for the term $- \abs\xi^2 k^3 b_0^2 (\nabla^2 k) b_0$ (5th-row in Figure \ref{fig:a_2(b_0,p_2)_partI}) gives rise to the function $-K_{(2,1,m)}(s)$. Sometimes, one needs to move $k$ in front of all $\nabla k$ (or $\nabla^2 k)$ before applying the rearrangement
    lemma, that is
    \begin{align}
        \label{eq:app-listb2-exchange-order}
        (\nabla k) k = k k^{-1} (\nabla k) k = k \modop^{1/2}(\nabla k).
    \end{align}
    This gives rise to a function $\sqrt s$ or $\sqrt t$ depending the position of $\nabla k$. 
    For instance, look at Figure \ref{fig:a_1(b_1,p_2)_partII}, second row, we have:
    \begin{align*}
     -\abs\xi^2 k b_0 (\nabla k) b_0 (\nabla k) k b_0  \longmapsto
     -\sqrt s \sqrt t H_{(1,1,1,m)}(s,t), \,\, m = \dim M.
    \end{align*}
   The function $\sqrt s \sqrt t$ reflects the fact that in order to we  move the $k$ appeared at the right end infront, we have to apply \eqref{eq:app-listb2-exchange-order} twice. \par
As a reminder, there is an overall constant $1/2$ in front of all functions.

   \begin{figure}[!htpb]
    \centering
    \includegraphics[scale=0.6]{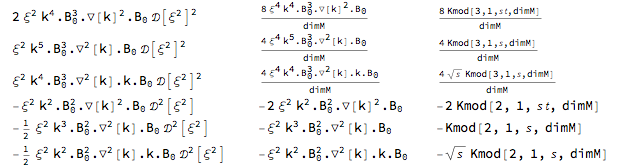}
    \caption{Terms come from $a_2(b_0,p_2)$, part I.}
    \label{fig:a_2(b_0,p_2)_partI}
\end{figure}
\begin{figure}[!htpb]
    \centering
    \includegraphics[scale=0.6]{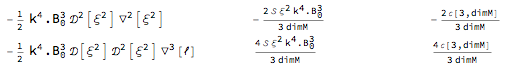}
    \caption{Terms come from $a_2(b_0,p_2)$, part II.}
    \label{fig:a_2(b_0,p_2)_partII}
\end{figure}
\begin{figure}[!htpb]
    \centering
    \includegraphics[scale=0.5]{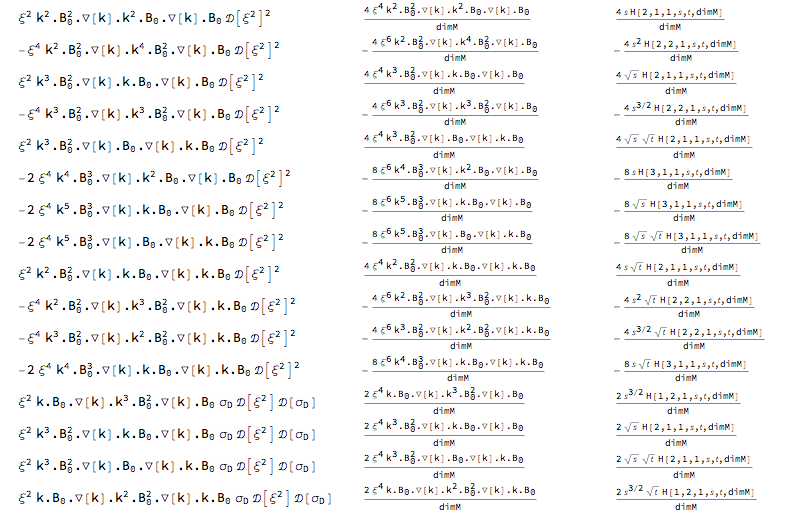}
    \caption{Terms come from $a_1(b_1,p_2)$, part I.}
    \label{fig:a_1(b_1,p_2)_partI}
\end{figure}

\begin{figure}[!htpb]
    \centering
    \includegraphics[scale=0.5]{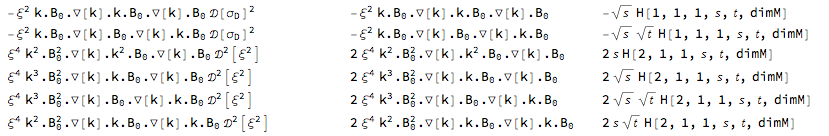}
    \caption{Terms come from $a_1(b_1,p_2)$, part II.}
    \label{fig:a_1(b_1,p_2)_partII}
\end{figure}

\begin{figure}[!htpb]
    \centering
    \includegraphics[scale=0.7]{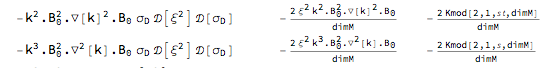}
    \caption{Terms come from $a_1(b_0,p_1)$}
    \label{fig:a_1(b_0,p_1)}
\end{figure}

\begin{figure}[!htpb]
    \centering
    \includegraphics[scale=0.57]{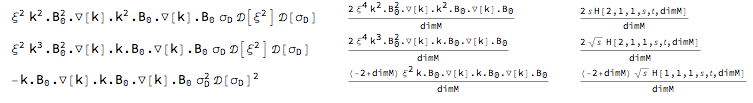}
    \caption{Terms from $a_0(b_1,p_1)$}
    \label{fig:a_0(b_1,p_1)}
\end{figure}

\begin{figure}[!htpb]
    \centering
    \includegraphics[scale=0.8]{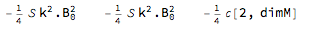}
    \caption{Terms from $a_0(b_0,p_0)$}
    \label{fig:a_0(b_0,p_0)}
\end{figure}

\begin{figure}[!htpb]
    \centering
    \includegraphics[scale=0.6]{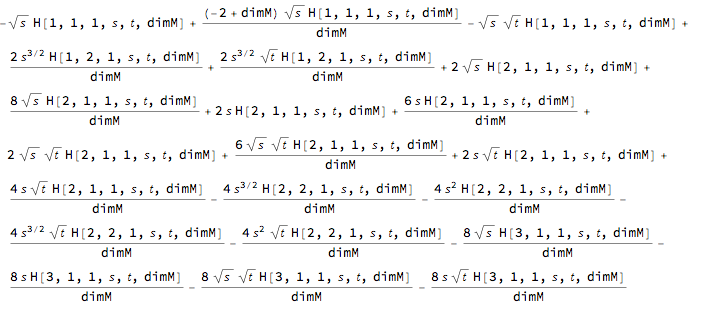}
    \caption{Two-variable local curvature function $H^{(m)}_{\Delta_k}(s,t)$, part I.}
    \label{fig:twovarfunp1}
\end{figure}

\begin{figure}[!htpb]
    \centering
    \includegraphics[scale=0.6]{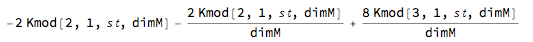}
    \caption{Two-variable modular curvature function $H^{(m)}_{\Delta_k}(s,t)$, part II.}
    \label{fig:twovarfunp2}
\end{figure}

\begin{figure}[!htpb]
    \centering
    \includegraphics[scale=0.47]{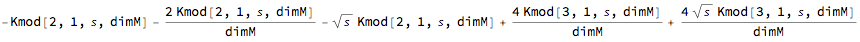}
    \caption{One-variable modular curvature function $K^{(m)}_{\Delta_k}(s)$.}
    \label{fig:onevarfun}
\end{figure}

\begin{figure}[!htpb]
    \centering
    \includegraphics[scale=0.6]{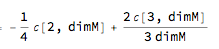}
    \caption{Coefficient for the scalar curvature term $c^{(m)}_{\Delta_k}$.}
    \label{fig:scaconst}
\end{figure}

\FloatBarrier

\section*{Acknowledgement}
The author would like to thank MPIM, Bonn for providing marvelous working environment. The author is also greatly indebted to Henri Moscovici for suggesting this  research project. This paper grew out of numerous conversations with him.

%----------------------------
\bibliographystyle{hplain}
%{alpha}

\bibliography{mylib}

\begin{thebibliography}{10}

\bibitem{berline1992heat}
Nicole Berline, Ezra Getzler, and Michele Vergne.
\newblock {\em Heat kernels and Dirac operators}.
\newblock Springer, 1992.

\bibitem{MR867684}
Arthur~L. Besse.
\newblock {\em Einstein manifolds}, volume~10 of {\em Ergebnisse der Mathematik
  und ihrer Grenzgebiete (3) [Results in Mathematics and Related Areas (3)]}.
\newblock Springer-Verlag, Berlin, 1987.

\bibitem{Bhuyain2012}
Tanvir~Ahamed Bhuyain and Matilde Marcolli.
\newblock The ricci flow on noncommutative two-tori.
\newblock {\em Letters in Mathematical Physics}, 101(2):173--194, 2012.

\bibitem{MR1158762}
Jean-Pierre Bourguignon and Paul Gauduchon.
\newblock Spineurs, op{\'e}rateurs de {D}irac et variations de m{\'e}triques.
\newblock {\em Comm. Math. Phys.}, 144(3):581--599, 1992.

\bibitem{MR3262521}
Simon Brain, Giovanni Landi, and Walter~D. van Suijlekom.
\newblock Moduli spaces of instantons on toric noncommutative manifolds.
\newblock {\em Adv. Theor. Math. Phys.}, 17(5):1129--1193, 2013.

\bibitem{2014arXiv1408.4429C}
B.~{{\'C}a{\'c}i{\'c}}.
\newblock {A reconstruction theorem for Connes-Landi deformations of
  commutative spectral triples}.
\newblock {\em ArXiv e-prints}, August 2014, 1408.4429.

\bibitem{MR2239979}
Ali~H. Chamseddine and Alain Connes.
\newblock Scale invariance in the spectral action.
\newblock {\em J. Math. Phys.}, 47(6):063504, 19, 2006.

\bibitem{connes1980c}
A.~Connes.
\newblock C*-algebres et g{\'e}om{\'e}trie diff{\'e}rentielle.
\newblock {\em CR Acad. Sci. Paris S{\'e}r. AB}, 290(13):A599--A604, 1980.

\bibitem{2016arXiv161109815C}
A.~{Connes} and F.~{Fathizadeh}.
\newblock {The term $a_4$ in the heat kernel expansion of noncommutative tori}.
\newblock {\em ArXiv e-prints}, November 2016, 1611.09815.

\bibitem{connes2008noncommutative}
A.~Connes and M.~Marcolli.
\newblock {\em Noncommutative geometry, quantum fields and motives}, volume~55.
\newblock Amer Mathematical Society, 2008.

\bibitem{MR1334867}
A.~Connes and H.~Moscovici.
\newblock The local index formula in noncommutative geometry.
\newblock {\em Geom. Funct. Anal.}, 5(2):174--243, 1995.

\bibitem{MR1303779}
Alain Connes.
\newblock {\em Noncommutative geometry}.
\newblock Academic Press, Inc., San Diego, CA, 1994.

\bibitem{MR1937657}
Alain Connes and Michel Dubois-Violette.
\newblock Noncommutative finite-dimensional manifolds. {I}. {S}pherical
  manifolds and related examples.
\newblock {\em Comm. Math. Phys.}, 230(3):539--579, 2002.

\bibitem{MR2403605}
Alain Connes and Michel Dubois-Violette.
\newblock Noncommutative finite dimensional manifolds. {II}. {M}oduli space and
  structure of noncommutative 3-spheres.
\newblock {\em Comm. Math. Phys.}, 281(1):23--127, 2008.

\bibitem{MR1846904}
Alain Connes and Giovanni Landi.
\newblock Noncommutative manifolds, the instanton algebra and isospectral
  deformations.
\newblock {\em Comm. Math. Phys.}, 221(1):141--159, 2001.

\bibitem{MR2427588}
Alain Connes and Henri Moscovici.
\newblock Type {III} and spectral triples.
\newblock In {\em Traces in number theory, geometry and quantum fields},
  Aspects Math., E38, pages 57--71. Friedr. Vieweg, Wiesbaden, 2008.

\bibitem{MR3194491}
Alain Connes and Henri Moscovici.
\newblock Modular curvature for noncommutative two-tori.
\newblock {\em J. Amer. Math. Soc.}, 27(3):639--684, 2014.

\bibitem{MR2907006}
Alain Connes and Paula Tretkoff.
\newblock The {G}auss-{B}onnet theorem for the noncommutative two torus.
\newblock In {\em Noncommutative geometry, arithmetic, and related topics},
  pages 141--158. Johns Hopkins Univ. Press, Baltimore, MD, 2011.

\bibitem{MR2627820}
Lance~Douglas Drager.
\newblock {\em O{N} {THE} {INTRINSIC} {SYMBOL} {CALCULUS} {FOR}
  {PSEUDO}-{DIFFERENTIAL} {OPERATORS} {ON} {MANIFOLDS}}.
\newblock ProQuest LLC, Ann Arbor, MI, 1978.
\newblock Thesis (Ph.D.)--Brandeis University.

\bibitem{MR3369894}
Farzad Fathizadeh.
\newblock On the scalar curvature for the noncommutative four torus.
\newblock {\em J. Math. Phys.}, 56(6):062303, 14, 2015.

\bibitem{MR2956317}
Farzad Fathizadeh and Masoud Khalkhali.
\newblock The {G}auss-{B}onnet theorem for noncommutative two tori with a
  general conformal structure.
\newblock {\em J. Noncommut. Geom.}, 6(3):457--480, 2012.

\bibitem{MR3148618}
Farzad Fathizadeh and Masoud Khalkhali.
\newblock Scalar curvature for the noncommutative two torus.
\newblock {\em J. Noncommut. Geom.}, 7(4):1145--1183, 2013.

\bibitem{MR3359018}
Farzad Fathizadeh and Masoud Khalkhali.
\newblock Scalar curvature for noncommutative four-tori.
\newblock {\em J. Noncommut. Geom.}, 9(2):473--503, 2015.

\bibitem{MR973171}
S.~A. Fulling and G.~Kennedy.
\newblock The resolvent parametrix of the general elliptic linear differential
  operator: a closed form for the intrinsic symbol.
\newblock {\em Trans. Amer. Math. Soc.}, 310(2):583--617, 1988.

\bibitem{MR2230348}
Victor Gayral, Bruno Iochum, and Joseph~C. V{{\'a}}rilly.
\newblock Dixmier traces on noncompact isospectral deformations.
\newblock {\em J. Funct. Anal.}, 237(2):507--539, 2006.

\bibitem{gilkey1995invariance}
Peter~B Gilkey and Domingo Toledo.
\newblock {\em Invariance theory, the heat equation, and the Atiyah-Singer
  index theorem}, volume~2.
\newblock CRC press Boca Raton, 1995.

\bibitem{MR2349630}
Giovanni Landi.
\newblock Examples of noncommutative instantons.
\newblock In {\em Geometric and topological methods for quantum field theory},
  volume 434 of {\em Contemp. Math.}, pages 39--72. Amer. Math. Soc.,
  Providence, RI, 2007.

\bibitem{leschdivideddifference}
Matthias Lesch.
\newblock Divided differences in noncommutative geometry: Rearrangement lemma,
  functional calculus and expansional formula.
\newblock {\em JOURNAL OF NONCOMMUTATIVE GEOMETRY}, (to appear) (2014).

\bibitem{MR3540454}
Matthias Lesch and Henri Moscovici.
\newblock Modular {C}urvature and {M}orita {E}quivalence.
\newblock {\em Geom. Funct. Anal.}, 26(3):818--873, 2016.

\bibitem{Liu:2015aa}
Yang Liu.
\newblock Modular curvature for toric noncommutative manifolds.
\newblock 10 2015, 1510.04668.

\bibitem{MR1184061}
Marc~A. Rieffel.
\newblock Deformation quantization for actions of {${\bf R}^d$}.
\newblock {\em Mem. Amer. Math. Soc.}, 106(506):x+93, 1993.

\bibitem{roe1999elliptic}
John Roe.
\newblock {\em Elliptic operators, topology, and asymptotic methods}.
\newblock CRC Press, 1999.

\bibitem{2013SIGMA...9..071R}
J.~{Rosenberg}.
\newblock {Levi-Civita's Theorem for Noncommutative Tori}.
\newblock {\em SIGMA}, 9:71, November 2013, 1307.3775.

\bibitem{MR538027}
Harold Widom.
\newblock Families of pseudodifferential operators.
\newblock In {\em Topics in functional analysis (essays dedicated to {M}. {G}.
  {K}re\u\i n on the occasion of his 70th birthday)}, volume~3 of {\em Adv. in
  Math. Suppl. Stud.}, pages 345--395. Academic Press, New York-London, 1978.

\bibitem{MR560744}
Harold Widom.
\newblock A complete symbolic calculus for pseudodifferential operators.
\newblock {\em Bull. Sci. Math. (2)}, 104(1):19--63, 1980.

\bibitem{2010LMaPh..94..263Y}
M.~{Yamashita}.
\newblock {Connes-Landi Deformation of Spectral Triples}.
\newblock {\em Letters in Mathematical Physics}, 94:263--291, December 2010,
  1006.4420.

\end{thebibliography}

\end{document}